\documentclass[10pt,oneside,reqno]{amsart}

\pdfoutput=1
\usepackage{accents}
\usepackage{afterpage}
\usepackage{amscd}
\usepackage{amsfonts}
\usepackage{amsmath}
\usepackage{amssymb}
\usepackage{amsthm}
\usepackage{appendix}

\usepackage{bbm}
\usepackage{bm}
\usepackage{braket}

\usepackage{calc}
\usepackage{caption}
\usepackage{changepage}
\usepackage{color}
\usepackage{comment}

\usepackage{dsfont}

\usepackage{enumitem}
\usepackage{etoolbox}
\usepackage{extpfeil}

\usepackage{graphicx}
\usepackage[margin=1in]{geometry}

\usepackage{import}

\usepackage{mathbbol}
\usepackage{mathrsfs}
\usepackage{mathtools}

\usepackage{pifont}

\usepackage{physics}

\usepackage{relsize}

\usepackage{scalerel}
\usepackage{stackengine}
\usepackage{stackrel}
\usepackage{stmaryrd}
\usepackage{subcaption}

\usepackage{tikz}
    \usetikzlibrary{arrows.meta}
    \usetikzlibrary{shapes}
    \usetikzlibrary{decorations.shapes}
    \usetikzlibrary{decorations.pathmorphing}

\usepackage{tikz-cd}
\usepackage{transparent}

\usepackage{wasysym}
\usepackage{wrapfig}

\usepackage{xcolor}
\usepackage{xfrac}
\usepackage{xparse}

\makeatletter
\let\old@tocline\@tocline
\let\section@tocline\@tocline
\newcommand{\subsection@dotsep}{4.5}
\newcommand{\subsubsection@dotsep}{4.5}
\patchcmd{\@tocline}
  {\hfil}
  {\nobreak
     \leaders\hbox{$\m@th
        \mkern \subsection@dotsep mu\hbox{.}\mkern \subsection@dotsep mu$}\hfill
     \nobreak}{}{}
\let\subsection@tocline\@tocline
\let\@tocline\old@tocline

\patchcmd{\@tocline}
  {\hfil}
  {\nobreak
     \leaders\hbox{$\m@th
        \mkern \subsubsection@dotsep mu\hbox{.}\mkern \subsubsection@dotsep mu$}\hfill
     \nobreak}{}{}
\let\subsubsection@tocline\@tocline
\let\@tocline\old@tocline

\let\old@l@subsection\l@subsection
\let\old@l@subsubsection\l@subsubsection

\def\@tocwriteb#1#2#3{%
  \begingroup
    \@xp\def\csname #2@tocline\endcsname##1##2##3##4##5##6{%
      \ifnum##1>\c@tocdepth
      \else \sbox\z@{##5\let\indentlabel\@tochangmeasure##6}\fi}%
    \csname l@#2\endcsname{#1{\csname#2name\endcsname}{\@secnumber}{}}%
  \endgroup
  \addcontentsline{toc}{#2}%
    {\protect#1{\csname#2name\endcsname}{\@secnumber}{#3}}}%

\newlength{\@tocsectionindent}
\newlength{\@tocsubsectionindent}
\newlength{\@tocsubsubsectionindent}
\newlength{\@tocsectionnumwidth}
\newlength{\@tocsubsectionnumwidth}
\newlength{\@tocsubsubsectionnumwidth}
\newcommand{\settocsectionnumwidth}[1]{\setlength{\@tocsectionnumwidth}{#1}}
\newcommand{\settocsubsectionnumwidth}[1]{\setlength{\@tocsubsectionnumwidth}{#1}}
\newcommand{\settocsubsubsectionnumwidth}[1]{\setlength{\@tocsubsubsectionnumwidth}{#1}}
\newcommand{\settocsectionindent}[1]{\setlength{\@tocsectionindent}{#1}}
\newcommand{\settocsubsectionindent}[1]{\setlength{\@tocsubsectionindent}{#1}}
\newcommand{\settocsubsubsectionindent}[1]{\setlength{\@tocsubsubsectionindent}{#1}}

\renewcommand{\l@section}{\section@tocline{1}{\@tocsectionvskip}{\@tocsectionindent}{}{\@tocsectionformat}}%
\renewcommand{\l@subsection}{\subsection@tocline{2}{\@tocsubsectionvskip}{\@tocsubsectionindent}{}{\@tocsubsectionformat}}%
\renewcommand{\l@subsubsection}{\subsubsection@tocline{3}{\@tocsubsubsectionvskip}{\@tocsubsubsectionindent}{}{\@tocsubsubsectionformat}}%
\newcommand{\@tocsectionformat}{}
\newcommand{\@tocsubsectionformat}{}
\newcommand{\@tocsubsubsectionformat}{}
\expandafter\def\csname toc@1format\endcsname{\@tocsectionformat}
\expandafter\def\csname toc@2format\endcsname{\@tocsubsectionformat}
\expandafter\def\csname toc@3format\endcsname{\@tocsubsubsectionformat}
\newcommand{\settocsectionformat}[1]{\renewcommand{\@tocsectionformat}{#1}}
\newcommand{\settocsubsectionformat}[1]{\renewcommand{\@tocsubsectionformat}{#1}}
\newcommand{\settocsubsubsectionformat}[1]{\renewcommand{\@tocsubsubsectionformat}{#1}}
\newlength{\@tocsectionvskip}
\newcommand{\settocsectionvskip}[1]{\setlength{\@tocsectionvskip}{#1}}
\newlength{\@tocsubsectionvskip}
\newcommand{\settocsubsectionvskip}[1]{\setlength{\@tocsubsectionvskip}{#1}}
\newlength{\@tocsubsubsectionvskip}
\newcommand{\settocsubsubsectionvskip}[1]{\setlength{\@tocsubsubsectionvskip}{#1}}

\patchcmd{\tocsection}{\indentlabel}{\makebox[\@tocsectionnumwidth][l]}{}{}
\patchcmd{\tocsubsection}{\indentlabel}{\makebox[\@tocsubsectionnumwidth][l]}{}{}
\patchcmd{\tocsubsubsection}{\indentlabel}{\makebox[\@tocsubsubsectionnumwidth][l]}{}{}

\newcommand{\@sectypepnumformat}{}
\renewcommand{\contentsline}[1]{%
  \expandafter\let\expandafter\@sectypepnumformat\csname @toc#1pnumformat\endcsname%
  \csname l@#1\endcsname}
\newcommand{\@tocsectionpnumformat}{}
\newcommand{\@tocsubsectionpnumformat}{}
\newcommand{\@tocsubsubsectionpnumformat}{}
\newcommand{\setsectionpnumformat}[1]{\renewcommand{\@tocsectionpnumformat}{#1}}
\newcommand{\setsubsectionpnumformat}[1]{\renewcommand{\@tocsubsectionpnumformat}{#1}}
\newcommand{\setsubsubsectionpnumformat}[1]{\renewcommand{\@tocsubsubsectionpnumformat}{#1}}
\renewcommand{\@tocpagenum}[1]{%
  \hfill {\mdseries\@sectypepnumformat #1}}

\let\oldappendix\appendix
\renewcommand{\appendix}{%
  \leavevmode\oldappendix%
  \addtocontents{toc}{%
    \protect\settowidth{\protect\@tocsectionnumwidth}{\protect\@tocsectionformat\sectionname\space}%
    \protect\addtolength{\protect\@tocsectionnumwidth}{2em}}%
}
\makeatother

\makeatletter
\settocsectionnumwidth{2em}
\settocsubsectionnumwidth{2.5em}
\settocsubsubsectionnumwidth{3em}
\settocsectionindent{1pc}%
\settocsubsectionindent{\dimexpr\@tocsectionindent+\@tocsectionnumwidth}%
\settocsubsubsectionindent{\dimexpr\@tocsubsectionindent+\@tocsubsectionnumwidth}%
\makeatother

\settocsectionvskip{10pt}
\settocsubsectionvskip{3pt}
\settocsubsubsectionvskip{0pt}
    
\settocsectionformat{\bfseries}
\settocsubsectionformat{\mdseries}
\settocsubsubsectionformat{\mdseries}
\setsectionpnumformat{\bfseries}
\setsubsectionpnumformat{\mdseries}
\setsubsubsectionpnumformat{\mdseries}

\let\oldtableofcontents\tableofcontents
\renewcommand{\tableofcontents}{%
  \vspace*{-\linespacing}
  \oldtableofcontents}

\DeclareSymbolFontAlphabet{\amsmathbb}{AMSb}

\DeclareFontFamily{U}{stix2bb}{\skewchar\font127 }
\DeclareFontShape{U}{stix2bb}{m}{n} {<-> stix2-mathbb}{}
\DeclareMathAlphabet{\mathbbnew}{U}{stix2bb}{m}{n}

\usepackage[
backref=true,
backrefstyle=none,
backend=biber,
style=alphabetic,
citestyle=alphabetic,
maxnames=50,
maxalphanames=7
]{biblatex}

\DefineBibliographyStrings{english}{
  backrefpage = {cited on page},
  backrefpages = {cited on pages},
}

\usepackage[linktocpage=true]{hyperref}

\mathtoolsset{showonlyrefs,showmanualtags}

\allowdisplaybreaks

\newtheorem{theorem}{Theorem}
\newtheorem{proposition}[theorem]{Proposition}
\newtheorem{conjecture}[theorem]{Conjecture}
\newtheorem{corollary}[theorem]{Corollary}
\newtheorem{lemma}[theorem]{Lemma}

\theoremstyle{definition}
\newtheorem{definition}[theorem]{Definition}
\newtheorem{remark}[theorem]{Remark}
\newtheorem{example}[theorem]{Example}

\numberwithin{theorem}{section}
\numberwithin{equation}{section}
\DeclareDocumentCommand{\faktor}{s m O{0.5} m O{-0.5}}{
  \setbox0=\hbox{\ensuremath{#2}}
  \setbox1=\hbox{\ensuremath{\diagup}}
  \setbox2=\hbox{\ensuremath{#4}}
  \raisebox{#3\ht1}{\usebox0}
  \mkern-5mu
    {\rotatebox{-44}{\rule[#5\ht2]{0.4pt}{-#5\ht2+#3\ht0+\ht0}}}
  \mkern-4mu%
  \raisebox{#5\ht2}{\usebox2}
}
\DeclareDocumentCommand{\doublefaktor}{s m O{0.5} m O{-0.5}}{
  \setbox0=\hbox{\ensuremath{#2}}
  \setbox1=\hbox{\ensuremath{\diagup}}
  \setbox2=\hbox{\ensuremath{#4}}
  \raisebox{#3\ht1}{\usebox0}
  \mkern-5mu
  {\rotatebox{-44}{\rule[#5\ht2]{0.4pt}{-#5\ht2+#3\ht0+\ht0}}}
  \mkern-14mu
  {\rotatebox{-44}{\rule[#5\ht2]{0.4pt}{-#5\ht2+#3\ht0+\ht0}}}
  \mkern-4mu
  \raisebox{#5\ht2}{\usebox2}
}
\makeatletter
\newcommand{\pushright}[1]{\ifmeasuring@#1\else\omit\hfill$\displaystyle#1$\fi\ignorespaces}
\newcommand{\pushleft}[1]{\ifmeasuring@#1\else\omit$\displaystyle#1$\hfill\fi\ignorespaces}
\makeatother
\input pdf-trans
\newbox\qbox
\def\usecolor#1{\csname\string\color@#1\endcsname\space}
\newcommand\bordercolor[1]{\colsplit{1}{#1}}
\newcommand\fillcolor[1]{\colsplit{0}{#1}}
\newcommand\colsplit[2]{\colorlet{tmpcolor}{#2}\edef\tmp{\usecolor{tmpcolor}}%
  \def\tmpB{}\expandafter\colsplithelp\tmp\relax%
  \ifnum0=#1\relax\edef\fillcol{\tmpB}\else\edef\bordercol{\tmpC}\fi}
\def\colsplithelp#1#2 #3\relax{%
  \edef\tmpB{\tmpB#1#2 }%
  \ifnum `#1>`9\relax\def\tmpC{#3}\else\colsplithelp#3\relax\fi
}
\newcommand\outline[1]{\leavevmode%
  \def\maltext{#1}%
  \setbox\qbox=\hbox{\maltext}%
  \boxgs{Q q 2 Tr \thickness\space w \fillcol\space \bordercol\space}{}%
  \copy\qbox%
}
\newcommand\mathcalbb[2][1]{%
   \stackengine{0pt}{\outline{$\mathcal{#2}$}}{\kern.3pt\outline{$\mathcal{#2}$}}{O}{l}{F}{F}{L}}
\bordercolor{black}
\fillcolor{white}
\def\thickness{0.35}
\makeatletter
\newcommand{\leqnomode}{\tagsleft@true\let\veqno\@@leqno}
\makeatother

\newcommand{\orep}{\mathcal{O}(\operatorname{Rep}_N(A))}
\newcommand{\rep}{\operatorname{Rep}_N(A)}
\newcommand{\GL}{\operatorname{GL}_N}

\newcommand{\s}{\operatorname{S}(A_{\natural})}
\renewcommand{\S}{\amsmathbb{S}}

\renewcommand{\O}{\mathcalbb{O}}

\newcommand{\Jac}{\mathbbnew{Jac}}
\renewcommand{\tr}{\operatorname{tr}_N}
\renewcommand{\a}{\alpha}
\renewcommand{\b}{\beta}
\newcommand{\cc}{\gamma}

\newcommand{\id}{\operatorname{id}}

\DeclareMathSymbol{\mh}{\mathord}{operators}{`\-}

\newcommand{\Der}{\operatorname{Der}}

\newcommand{\br}{\{\!\!\{-,-\}\!\!\}}
\newcommand{\T}{\operatorname{T}}
\newcommand{\Assoc}{\operatorname{Assoc}}
\newcommand{\dbr}[2]{\{\!\!\{#1,#2\}\!\!\}}
\newcommand{\kk}{\Bbbk}
\newcommand{\RR}{\mathcal{R}}
\definecolor{blue-violet}{rgb}{0.54, 0.17, 0.89}
\definecolor{darkpastelgreen}{rgb}{0.01, 0.75, 0.24}
\definecolor{darkpowderblue}{rgb}{0.0, 0.2, 0.6}
\definecolor{dodgerblue}{rgb}{0.12, 0.56, 1.0}
\definecolor{denim}{rgb}{0.08, 0.38, 0.74}
\definecolor{amber}{rgb}{1.0, 0.75, 0.0}
\definecolor{applegreen}{rgb}{0.55, 0.71, 0.0}

\title{Coupled double Poisson brackets}

\author{Nikita Safonkin}

\address{Institute of Mathematics, Leipzig University, Augustusplatz 10, 04109 Leipzig, Germany.}

\email{safonkin.nik@gmail.com}

\begin{document}

\begin{abstract}
We introduce \textit{coupled double Poisson brackets}
on an associative algebra $A$ as pairs consisting of a generalized Van den Bergh's double Poisson
bracket and a generalized Fairon--McCulloch's
right double Poisson bracket subject to a
cross-Jacobi identity. Each of Van den Bergh's double brackets,
Fairon--McCulloch's right double brackets, and also Ginzburg--Schedler's
wheeled Poisson brackets induces a $\operatorname{GL}_N$-invariant Poisson structure on the representation scheme $\operatorname{Rep}_N(A)$ parametrizing $N$-dimensional representations of $A$, thereby satisfying the Kontsevich--Rosenberg
principle. Wheeled Poisson brackets seem to be the most general such
structures, and while their relation to Van den Bergh's double Poisson
brackets is known,
their relation to Fairon--McCulloch's
right double Poisson brackets has remained open. We fill this gap and establish a
bijection between pairs of coupled double Poisson
brackets and wheeled Poisson brackets of
Ginzburg and Schedler. On free polynomial algebras,
we furthermore establish a one-to-one correspondence
between linear coupled double Poisson brackets and a
new algebraic structure that we call
\textit{Poisson-left-pre-Lie algebras}, and describe
quadratic ones via solutions of the associative and
classical Yang--Baxter equations satisfying a compatibility condition.
\end{abstract}

\maketitle

    \setcounter{tocdepth}{2}
    \tableofcontents
    
    \section{Introduction}

A guiding principle in noncommutative geometry, usually referred to as the Kontsevich--Rosenberg principle \cite{kontsevich2000noncommutative}, \cite{kontsevich1993formal}, suggests that geometric structures on a noncommutative algebra $A$ should induce corresponding classical geometric structures on representation schemes $\operatorname{Rep}_N(A)$ parametrizing \(N\)-dimensional representations for all $N\geq 1$. In particular, a suitable notion of ``noncommutative Poisson structure'' on $A$ should give rise to ordinary Poisson brackets on the coordinate rings $\mathcal{O}(\operatorname{Rep}_N(A))$. Three such structures have been studied in the literature: the double Poisson brackets of Van den Bergh~\cite{van2008double}, the right double Poisson brackets of Fairon and McCulloch~\cite{fairon2023around}, and the wheeled, or \textit{di-twisted}, Poisson brackets of Ginzburg and Schedler~\cite{ginzburg2010differentialoperatorsbvstructures}. The main goal of this paper is to clarify the relationship between these three notions.

Van den Bergh \cite{van2008double} introduced \textit{double Poisson brackets} as linear maps $\{\!\!\{-,-\}\!\!\}\colon A\otimes A\longrightarrow A\otimes A$ satisfying noncommutative analogs of skew-symmetry, the Leibniz rule, and the Jacobi identity. Each such bracket induces a $\operatorname{GL}_N$-invariant Poisson bracket on the coordinate ring $\mathcal{O}(\operatorname{Rep}_N(A))$, hence satisfies the Kontsevich--Rosenberg principle.

Double Poisson brackets have since become a central tool in noncommutative Poisson geometry. They and their quasi-Poisson analogs have revealed important connections with, among other things, surface groups and character varieties (\cite{massuyeau2014quasi}, \cite{turaev2014poisson}, \cite{alekseev2018goldman}, \cite{gekhtman2024double}, \cite{fairon2025wild}); integrable systems and Hamiltonian PDEs (\cite{ovenhouse2020noncommutative}, \cite{desole2015double}, \cite{alvarez2023noncommutative}, \cite{fairon2022morphisms}, \cite{arthamonov2017modified}); the associative and classical Yang--Baxter equations (\cite{schedler2009poisson}, \cite{odesskii2013double}, \cite{odesskii2014parameter}); Rota--Baxter operators (\cite{goncharov2022double}, \cite{fairon2024modified}); and pre-Calabi--Yau algebras (\cite{iyudu2021precalabiYau}, \cite{fernandez2022qpcy}). The internal structure theory has also been developed in several directions: classifications on commutative algebras (\cite{powell2016double}) and on finite-dimensional algebras (\cite{vandeweyer2008double}, \cite{sharygin2025low}); cohomology (\cite{pichereau2008double}, \cite{fairon2025cohomology}); deformation quantization and formality (\cite{safonkin2025doublestarproduct}); as well as homotopical and derived versions (\cite{fernandez2021cyclic}, \cite{quesney2023balanced}, \cite{leray2025pre}, \cite{liu2025shifted}, \cite{pridham2020shifted}).

Ginzburg and Schedler \cite{ginzburg2010differentialoperatorsbvstructures} introduced the more general framework of \textit{di-twisted Poisson brackets} (called \textit{wheeled Poisson brackets} in \emph{loc.\ cit.}) on a certain graded algebra $\O(A)$, which we call the double coordinate ring of $A$ following \cite{safonkin2025doublestarproduct}. The algebra $\O(A)$ is built out of tensor powers of $A$, the symmetric algebra $\operatorname{S}(A/[A,A])$, and the group algebras of the symmetric groups; it naturally parametrizes $\operatorname{GL}_N$-equivariant operations on the representation spaces via a generalization of the Le Bruyn--Procesi theorem established in \cite{safonkin2025doublestarproduct}. Di-twisted Poisson brackets on $\O(A)$ induce $\operatorname{GL}_N$-invariant Poisson structures on each $\operatorname{Rep}_N(A)$ and are seemingly the most general structures with this property.

Later, Fairon and McCulloch \cite{fairon2023around} introduced \textit{right double Poisson brackets}, which use a different bimodule structure on $A\otimes A$ compared to the double Poisson brackets of Van den Bergh and produce $\operatorname{GL}_N$-invariant Poisson brackets on representation spaces via a different formula. The connection between double Poisson brackets and di-twisted Poisson brackets was mentioned in \cite{ginzburg2010differentialoperatorsbvstructures} and proved in \cite{fernández2025symplecticwheelgebrasnoncommutativegeometry}. The intuition that the wheeled Poisson brackets of Ginzburg and Schedler are the most general structures satisfying the Kontsevich--Rosenberg principle for Poisson brackets suggests that there must also be a connection between them and the right double Poisson brackets of Fairon and McCulloch.

Before we describe this connection, let us point out why the original double and right double brackets are not enough to capture all $\operatorname{GL}_N$-invariant Poisson structures on representation schemes. For example, for $A=\kk[x]$, the representation scheme $\operatorname{Rep}_N(A)$ is canonically identified with $\mathfrak{gl}_N^*$, so $\mathcal{O}(\operatorname{Rep}_N(A))\simeq \operatorname{S}(\mathfrak{gl}_N)$, and the Kirillov--Kostant--Souriau bracket on $\operatorname{S}(\mathfrak{gl}_N)$ comes from the linear double Poisson bracket $\{\!\!\{x,x\}\!\!\}=1\otimes x-x\otimes 1$ on $A$. One can modify the KKS bracket by multiplying it by a Casimir, say $E_{11}+\ldots+E_{NN}$, and obtain another $\operatorname{GL}_N$-invariant Poisson bracket given by
\begin{equation*}
    \{E_{ij},E_{kl}\}_N=(\delta_{kj}E_{il}-\delta_{il}E_{kj})\sum\limits_{p=1}^N E_{pp}.
\end{equation*}
On the other hand, this new Poisson bracket still depends on $N$ ``tamely'', e.g., $N$ does not enter the formula explicitly but only through the summation limit, and therefore the Poisson bracket should come from a ``universal object'' that does not depend on $N$. It is indeed induced by a di-twisted Poisson bracket on $\O(A)$, which does not depend on $N$, yet it is not induced by any double or right double bracket on $A$: the trace $\sum_p E_{pp}$ has no place in the target $A\otimes A$ of double/right double brackets but fits naturally inside the symmetric algebra $\operatorname{S}(A/[A,A])$. This mechanism is responsible for the appearance of the tensor factor $\operatorname{S}(A/[A,A])$ in the generalized versions of double and right double brackets below.

The present paper describes the connection between the di-twisted Poisson brackets of Ginzburg and Schedler and Van den Bergh's double and Fairon--McCulloch's right double Poisson brackets through a notion we call \textit{coupled double Poisson brackets}: a pair consisting of a generalization of a Van den Bergh double bracket and a generalization of a Fairon--McCulloch right double bracket, each valued in $A\otimes A\otimes \operatorname{S}(A/[A,A])$ rather than in $A\otimes A$, and subject to a cross-Jacobi compatibility that couples the two (Definition~\ref{def16}). The main result reads as follows.

\begin{theorem}[{Theorem~\ref{th1} below}]
Let $A$ be a finitely generated associative $\kk$-algebra. Restriction to $A\otimes A\subset \O(A)_1\otimes \O(A)_1$ yields a bijection between di-twisted Poisson brackets on $\O(A)$ and pairs $\bigl(\{\!\!\{-,-\}\!\!\}_{\operatorname{id}},\,\{\!\!\{-,-\}\!\!\}_{(12)}\bigr)$ of coupled double Poisson brackets on $A$. The correspondence is given by
\begin{equation*}
    \{a,b\}=\{\!\!\{a,b\}\!\!\}_{\operatorname{id}}\otimes \operatorname{id}_2+\{\!\!\{a,b\}\!\!\}_{(12)}\otimes (12)\in\O(A)_2,\qquad a,b\in A,
\end{equation*}
where $\{\!\!\{-,-\}\!\!\}_{\operatorname{id}}\colon A\otimes A\longrightarrow A\otimes A\otimes \operatorname{S}(A/[A,A])$ is a right double bracket and $\{\!\!\{-,-\}\!\!\}_{(12)}\colon A\otimes A\longrightarrow A\otimes A\otimes \operatorname{S}(A/[A,A])$ is a double bracket.
\end{theorem}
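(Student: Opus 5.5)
The plan is to use that $\O(A)$ is generated, as a wheelgebra/di-twisted algebra, by $A$ in arity one, so that a di-twisted bracket is determined by its values on generators. Concretely, $\O(A)_n$ is built from $A^{\otimes n}$, $\operatorname{S}(A/[A,A])$ and $\kk[S_n]$. The defining Leibniz rules for di-twisted brackets (in each argument, with respect to the tensor product, the contraction/trace maps and the $S_n$-action) will show that $\{-,-\}$ is determined by its restriction $A\otimes A\to\O(A)_2$. Since $\O(A)_2\simeq A^{\otimes 2}\otimes\operatorname{S}(A/[A,A])\otimes\kk[S_2]$, such a restriction decomposes uniquely as $\{\!\!\{-,-\}\!\!\}_{\id}\otimes\id_2+\{\!\!\{-,-\}\!\!\}_{(12)}\otimes(12)$. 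This gives injectivity of the restriction map. It remains to show that (i) the axioms of a di-twisted Poisson bracket translate exactly into the axioms of Definition~\ref{def16}, and (ii) every coupled pair extends.

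For (i), I will unpack each axiom on generators $a,b,c\in A$ in turn.
\begin{itemize}
\item \emph{Leibniz rule.} I will compute $\{a,bc\}$, where $bc=\mu(b\otimes c)$ is obtained from $b\otimes c\in\O(A)_2$ by the composition/contraction with permutation $(12)$ or $\id$. Tracking how multiplication in $A$ is the image of the contraction $\O(A)_2\to\O(A)_1$ applied to $b\otimes c\otimes(12)$, the $\id$-component acquires the right (Fairon--McCulloch) outer bimodule structure. The $(12)$-component acquires Van den Bergh's outer structure. Contractions which close a cycle produce traces in $A/[A,A]$, and this is what forces the $\operatorname{S}(A/[A,A])$-valued generalization.
\item \emph{Skew-symmetry.} Skew-symmetry $\{b,a\}=-\sigma\{a,b\}$, where $\sigma$ swaps the two legs and conjugates the permutation by $(12)$, gives the respective twisted skew-symmetries, since both $\id$ and $(12)$ are fixed by conjugation.
\item \emph{Jacobi.} The Jacobi identity in $\O(A)_3$ splits along $S_3=\{\id,(12),(13),(23),(123),(132)\}$. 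The cyclic-permutation components give the Van den Bergh triple-bracket Jacobi for $\{\!\!\{\}\!\!\}_{(12)}$. The $\id$ component gives the right Jacobi for $\{\!\!\{\}\!\!\}_{\id}$. The transposition components mix the two brackets and give the cross-Jacobi identity. The brackets applied to the $\operatorname{S}(A/[A,A])$-coefficients also contribute, through the induced brackets on traces.
\end{itemize}

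For (ii), given a coupled pair, I will define $\{-,-\}$ on all of $\O(A)$ by the Leibniz rules; $\O(A)$ is freely generated, so this presentation has no relations to check beyond those in $A$ itself. I will then verify three things:
\begin{itemize}
\item the extension respects associativity of $A$, using the Leibniz rules of the two double brackets;
\item skew-symmetry propagates from generators;
\item Jacobi propagates, because the Jacobiator is a derivation-like triple operation in each argument, so it vanishes once it vanishes on generators; the previous step shows it does.
\end{itemize}
Finite generation of $A$ is used so that brackets on generators determine everything, and to apply the generalized Le Bruyn--Procesi description if needed.

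The main obstacle is the Jacobi step: expanding $\{a,\{b,c\}\}$ when $\{b,c\}$ has $\operatorname{S}(A/[A,A])$ coefficients, and correctly composing permutations with the bracket's own permutation labels. I would first handle the case of trivial trace parts, which must reproduce the known Van den Bergh correspondence of \cite{fernández2025symplecticwheelgebrasnoncommutativegeometry} for the $(12)$ part. Then I would treat the trace terms as a separate bookkeeping layer, checking that they assemble into the induced bracket $\{a,\operatorname{tr}(x)\}$ prescribed in Definition~\ref{def16}.
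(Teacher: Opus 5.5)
Your outline follows the paper's proof. Skew-symmetry together with $\{\pi(\alpha),\beta\}=\pi\{\alpha,\beta\}$, applied to $a\otimes b\otimes\mathds{1}\otimes(12)$, gives the Leibniz rules (Lemma~\ref{lemma1}). The Jacobiator on $a,b,c\in A$ is split along $S(3)$ (Lemma~\ref{lemma2}), the inverse map is the forced Leibniz extension \eqref{f103}, and Jacobi is propagated from $A$ to $\O(A)$ as in the proof of Corollary~\ref{cor1}.

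There is, however, one step your plan does not supply: passing from six components to three conditions. The splitting produces six elements $\Jac^{\tau}(a,b,c)\in A^{\otimes 3}\otimes\s$, one for each $\tau\in S(3)$. Definition~\ref{def16} only imposes $\Jac^{\id_3}=0$, $\Jac^{12}=0$ and $\Jac^{123}=0$. For ``di-twisted Poisson $\Rightarrow$ coupled'' this is harmless. For the converse, though, you must also kill $\Jac^{13}$, $\Jac^{23}$ and $\Jac^{132}$. These are different expressions in $(a,b,c)$ (compare $\Jac^{12}$ and $\Jac^{13}$ term by term). So saying that ``the transposition components give the cross-Jacobi identity'' and ``the cyclic components give the Van den Bergh Jacobi'' is not yet an argument.

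What closes the gap is Remark~\ref{rem1}. Skew-symmetry and $S(n)\times S(m)$-equivariance of the extended bracket make its Jacobiator skew-equivariant under relabeling of the arguments, as in \eqref{f1}. This holds for any di-twisted bracket, Poisson or not. Componentwise it reads $\Jac^{\tau}(\sigma(a_1,a_2,a_3))=(-1)^{\sigma}\sigma\Jac^{\sigma^{-1}\tau\sigma}(a_1,a_2,a_3)$. Conjugation acts transitively on transpositions and on $3$-cycles, so vanishing of $\Jac^{12}$ and $\Jac^{123}$ on all triples forces all six components to vanish.

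Two smaller points. In the propagation step, make sure ``derivation-like in each argument'' includes $\Jac(\pi(\alpha),\beta,\gamma)=\pi\Jac(\alpha,\beta,\gamma)$. The arity-zero part $\s$ is reached from $A$ only through $\pi$, so derivation and equivariance alone do not let you pass from $A$ to all of $\O(A)$. Also, finite generation of $A$ plays no role in the bijection. The bracket is determined by its values on $A$ regardless; finite generation only matters for making Corollary~\ref{cor1} a finite check.
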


In one direction, Theorem~\ref{th1} is essentially a matter of unpacking the axioms: a di-twisted Poisson bracket restricted to $A\otimes A$ automatically takes values in $\O(A)_2=A\otimes A\otimes \operatorname{S}(A/[A,A])\otimes \kk[S(2)]$, and the decomposition $\kk[S(2)]=\kk\cdot \operatorname{id}_2\oplus \kk\cdot (12)$ splits it into two components, while the di-twisted skew-symmetry, Leibniz rule, and Jacobi identity translate into the corresponding conditions, together with the cross-Jacobi compatibility, for the resulting pair. The converse direction is more delicate: given a coupled pair on $A$, one must extend the two brackets from $A\otimes A$ to all of $\O(A)$ by forced prescriptions coming from bimodule equivariance, the graded Leibniz rule, and compatibility with the map $\pi\colon \O(A)\to \O(A)$ that encodes the multiplication in $A$, and then verify that the resulting bracket satisfies the di-twisted Jacobi identity throughout. This is carried out in Section~\ref{sec_proof}.

Theorem~\ref{th1} has two immediate consequences. Combined with Proposition~\ref{prop3}, it yields the Kontsevich--Rosenberg principle for coupled double Poisson brackets: every coupled pair on $A$ produces a $\operatorname{GL}_N$-invariant Poisson bracket on each coordinate ring $\mathcal{O}(\operatorname{Rep}_N(A))$, and the formulas of Propositions~\ref{prop1} and~\ref{prop2} are recovered as the two extreme cases singled out in Remark~\ref{rem2} (Corollary~\ref{cor_KR}). And, as in the double and right double settings, verifying the coupled double Poisson axioms reduces to checking them on a set of algebra generators of $A$, which is a finite condition when $A$ is finitely generated (Corollary~\ref{cor1}).

The paper also investigates two important classes of coupled double Poisson brackets, both in the setting where the two brackets land in $A\otimes A$ (the third case of Remark~\ref{rem2}). In the linear case (Section~\ref{sec_linear}), we show that linear coupled double Poisson brackets on the tensor algebra $\T(V)$ of a finite-dimensional vector space $V$ are classified by a new algebraic structure on $V$, which we call a \textit{Poisson-left-pre-Lie algebra}: the data of an associative product $\cdot$ and a left pre-Lie bracket $\{-,-\}$ on $V$ satisfying the cyclic identity and the Leibniz rule
\begin{equation*}
\{ab,c\}=\{ba,c\},\qquad \{a,bc\}=\{a,b\}c+b\{a,c\}.
\end{equation*}
In the quadratic case (Section~\ref{sec1}), we observe that any pair of skew-symmetric linear maps $\mathcal{R},\,r\colon V\otimes V\longrightarrow V\otimes V$ such that $\mathcal{R}$ satisfies the associative Yang--Baxter equation (AYBE), $r$ satisfies the classical Yang--Baxter equation (CYBE), and the compatibility condition
\begin{equation*}
[\mathcal{R}^{12},\,r^{13}+r^{23}]=0
\end{equation*}
holds, gives rise to a pair of quadratic coupled double Poisson brackets on $\T(V)$. Building on Sokolov's classification of skew-symmetric solutions of the AYBE in dimension three \cite{sokolov2013classification}, we exhibit explicit families of such compatible pairs for $\dim V\leq 3$ and conjecture that, for $\dim V=3$, our list---together with the transposed duals of its entries and their scalar multiples---exhausts all compatible pairs modulo the conjugation action of $\operatorname{GL}_3(\kk)$. We also construct a multiparameter family of quadratic coupled double Poisson brackets on $\kk\langle x_1,\ldots,x_n\rangle$ for arbitrary $n$.

We close with a remark on the Kontsevich--Rosenberg principle. The principle admits at least two variations: a derived version, in which representation schemes are replaced by derived representation schemes \cite{berest2013derived}, \cite{berest2012noncommutative}, \cite{berest2014derived}; and a B-C-D-type variant \cite{olshanski2023double}, in which representation schemes are replaced by certain subschemes carrying a canonical action of an orthogonal or symplectic group, i.e., a classical group of type B, C, or D. The present paper focuses on the original version of the principle, but we expect that the results and methods can be adapted to these variants as well. Other natural directions that lie beyond the present work are quasi-Poisson analogs of coupled double Poisson brackets following \cite{van2008non} and coupled versions of double Poisson vertex algebras following \cite{desole2015double}. We hope to return to these problems in future work.

\medskip

\textbf{Outline of the paper.} Section~\ref{section_preliminaries} recalls the definitions of double and right double Poisson brackets, the representation scheme $\rep$, the double coordinate ring $\O(A)$, and Ginzburg--Schedler's di-twisted Poisson brackets. Section~\ref{sec_coupled} introduces the notion of coupled double Poisson brackets (Definition~\ref{def16}), states the main bijection with di-twisted Poisson brackets on $\O(A)$ (Theorem~\ref{th1}), and records the resulting Kontsevich--Rosenberg principle (Corollary~\ref{cor_KR}) together with the reduction of the identities to a set of generators (Corollary~\ref{cor1}). Section~\ref{sec_proof} contains the proofs of Theorem~\ref{th1} and Corollary~\ref{cor1}. Section~\ref{sec_linear} treats the linear case on the tensor algebra $\T(V)$ and identifies it with Poisson-left-pre-Lie algebra structures (Theorem~\ref{thm_linear}). Section~\ref{sec1} is devoted to the quadratic case: the Yang--Baxter compatibility criterion (Proposition~\ref{prop5}), the conjectural classification on $\kk^3$ following Sokolov's list, and a multiparameter family on $\kk^n$. 

\medskip

\textbf{Acknowledgments.} I am grateful to Maxime Fairon, Vladimir Roubtsov, and Michael Semenov-Tian-Shansky for many valuable comments and stimulating discussions. The author was partially supported by the European Research Council (ERC), Grant Agreement No.~101041499.

\addtocontents{toc}{\protect\setcounter{tocdepth}{1}}

\section{Preliminaries and notation}\label{section_preliminaries}

Throughout the paper we assume that $\kk$ is an algebraically closed field of characteristic zero. We denote the tensor product of vector spaces over $\kk$ by $\otimes$. All algebras in the paper are assumed to be unital unless otherwise stated.

By $A$ we will always mean a finitely generated algebra over $\kk$, by $A_{\natural}$ we denote the quotient vector space $\faktor{A}{[A,A]}$. We will denote the map $A\longrightarrow A_{\natural}$ by $a\mapsto \overline{a}$. Below we work with tensor powers of the algebra $A$ and use the following notation.

 An element $a=\sum\limits_i a_i'\otimes a_i''\in A^{\otimes 2}$ is usually written as $a=a'\otimes a''$. We will call these $a'$ and $a''$ \textit{the Sweedler's components} of $a$. We use similar notation for the tensor cube or any other tensor power of $A$, e.g., $a'\otimes a''\otimes a'''=\sum\limits_i a_i'\otimes a_i''\otimes a_i'''\in A^{\otimes 3}$.

For any $n$ we consider the natural action of the symmetric group on $n$ letters $S(n)$ on $A^{\otimes n}$ given by 
\begin{equation}
    \sigma (a_1\otimes\ldots\otimes a_n):=a_{\sigma^{-1}(1)}\otimes\ldots\otimes a_{\sigma^{-1}(n)},\ \ \ \sigma\in S(n).
\end{equation}

\subsection{Double Poisson brackets}
Recall the definition of outer and inner bimodule structures on $A\otimes A$
\begin{gather}
    a(x'\otimes x'')b:=(ax')\otimes (x''b),\\
    a*(x'\otimes x'')*b:=(x'b)\otimes (ax''),
\end{gather}
where $a,b,x',x''\in A$.

Note that the permutation $(12)\in S(2)$ interchanges these two bimodule structures
\begin{equation}
    (12)(axb)=a*(12)x*b,\ \ \ \ \ \ \ a,b\in A,\ \ \ \ \ x\in A^{\otimes 2}.
\end{equation}

\begin{definition}[Van den Bergh, \cite{van2008double}]
A \emph{double Poisson bracket} on $A$ is a linear map 
$$
\{\!\!\{-,-\}\!\!\}: A\otimes A \longrightarrow  A\otimes A
$$
satisfying the following three conditions. 

1. \emph{Skew-symmetry}: for $a,b\in A$,
\begin{equation}
\{\!\!\{a,b\}\!\!\}=-(12)\{\!\!\{b,a\}\!\!\}.
\end{equation}

2. \emph{Outer Leibniz rule}: for $a,b,c\in A$, 
\begin{equation}
\{\!\!\{a, bc\}\!\!\}=\{\!\!\{a,b\}\!\!\} c+ b\{\!\!\{a,c\}\!\!\},
\end{equation}
i.e., $\{\!\!\{a,-\}\!\!\}$ is a derivation $A\longrightarrow A\otimes A$ with respect to the outer bimodule structure; then $\{\!\!\{-,b\}\!\!\}$ is necessarily a derivation $A\longrightarrow A\otimes A$ with respect to the inner bimodule structure.
\vspace{5pt}

3. \emph{Double Jacobi identity}: $\Jac(a,b,c)=0$ for $a,b,c\in A$, where
\begin{equation}
\Jac(a,b,c):=\Bigl\{\!\!\!\Bigl\{ a,\{\!\!\{b,c\}\!\!\}\Bigr\}\!\!\!\Bigr\}_{\mathbf L} +(123) \Bigl\{\!\!\!\Bigl\{ b,\{\!\!\{ c,a\}\!\!\}\Bigr\}\!\!\!\Bigr\}_{\mathbf L} +(123)^2 \Bigl\{\!\!\!\Bigl\{ c,\{\!\!\{ a,b\}\!\!\}\Bigr\}\!\!\!\Bigr\}_{\mathbf L},
\end{equation}
and $(123)$ is the cyclic permutation $1\to 2\to 3\to 1$, which pushes everything to the right
$$
(123)\cdot (x'\otimes x''\otimes x''')=x'''\otimes x'\otimes x'',
$$
and for $x=x'\otimes x''\in A^{\otimes 2}$,
$$
\{\!\!\{ a,x\}\!\!\}_{\mathbf L}:= \{\!\!\{ a,x'\}\!\!\}\otimes x''\in A^{\otimes 3}.
$$
\end{definition}

\subsection{Right-double Poisson brackets}

Here we use different bimodule structures on $A\otimes A$, namely the right and left bimodule structures
\begin{gather}
    a\cdot_r(x'\otimes x'')\cdot _rb:=x'\otimes (ax''b),\\
    a*_l(x'\otimes x'')*_lb:=(ax'b)\otimes x'',
\end{gather}
where $a,b,x',x''\in A$.

The permutation $(12)\in S(2)$ interchanges them
\begin{equation}
    (12)(a\cdot_rx\cdot_rb)=a*_l(12)x*_lb,\ \ \ \ \ \ \ a,b\in A,\ \ \ \ \ x\in A^{\otimes 2}.
\end{equation}

\begin{definition}[Fairon-McCulloch, \cite{fairon2023around}]
    A \emph{right double Poisson bracket} on $A$ is a linear map 
$$
\{\!\!\{-,-\}\!\!\}: A\otimes A \longrightarrow  A\otimes A
$$
satisfying the following three conditions. 

1. \emph{Skew-symmetry}: for $a,b\in A$,
\begin{equation}
\{\!\!\{a,b\}\!\!\}=-(12)\{\!\!\{b,a\}\!\!\}.
\end{equation}

2. \emph{Right Leibniz rule}: for $a,b,c\in A$, 
\begin{equation}
\{\!\!\{a, bc\}\!\!\}=\{\!\!\{a,b\}\!\!\} c+ b\{\!\!\{a,c\}\!\!\},
\end{equation}
i.e., $\{\!\!\{a,-\}\!\!\}$ is a derivation $A\longrightarrow A\otimes A$ with respect to the right bimodule structure; then $\{\!\!\{-,b\}\!\!\}$ is necessarily a derivation $A\longrightarrow A\otimes A$ with respect to the left bimodule structure.
\vspace{5pt}

3. \emph{Right double Jacobi identity\footnote{The authors used the term $(12)$-weak double Jacobiator for the left-hand side of the identity.}}: $\Jac(a,b,c)-(12)\Jac(b,a,c)=0$ for $a,b,c\in A$. 
\end{definition}

\subsection{Representation spaces \texorpdfstring{$\rep$}{RepNA}}

Let $N$ be a positive integer. 

\begin{definition}
    Representation space $\rep$ is by definition the affine $\kk$-scheme representing the functor 
    \begin{align}
        &CAlg\longrightarrow Set,\\
        &R\mapsto \operatorname{Hom}_{\operatorname{Alg}}\left(A,\operatorname{Mat}_N(R)\right),
    \end{align}
    where $CAlg$ is the category of commutative unital algebras.
\end{definition}

The coordinate ring $\orep$ of $\rep$ is the commutative algebra generated by the symbols $a_{ij}$ for $a\in A$ and $i,j\in\{1,\dots,N\}$, which are $\kk$-linear in $a$  and  are subject to the relations
\begin{equation}\label{f17}
(ab)_{ij}=\sum_{k=1}^N a_{ik}b_{kj}, \qquad a,b\in A, \quad i,j=1,\dots,N
\end{equation}
together with
\begin{equation}\label{f36}
    (1)_{ij}=\delta_{ij} \qquad i,j=1,\dots,N,
\end{equation}
where $1$ is the identity in $A$. 

\begin{proposition}[\cite{van2008double}, Proposition 1.2]\label{prop1}
Let $A$ be an associative $\kk$-algebra and $\{\!\!\{-,-\}\!\!\}$ be a double Poisson bracket on $A$. For each $N=1,2,\dots$, the formula
\begin{equation}
\{a_{ij},b_{kl}\}:=\{\!\!\{ a,b\}\!\!\}'_{kj}\{\!\!\{ a,b\}\!\!\}''_{il}, \qquad a,b\in A, \quad i,j,k,l\in\{1,\dots,N\},
\end{equation}
gives rise to a Poisson bracket on the commutative algebra $\orep$. 
\end{proposition}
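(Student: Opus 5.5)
The plan is to define the bracket on generators, extend it as a biderivation, and check that it descends to $\orep$. Then skew-symmetry and the Jacobi identity each reduce to a computation on generators. Throughout, write $\{\!\!\{a,b\}\!\!\}=\{\!\!\{a,b\}\!\!\}'\otimes\{\!\!\{a,b\}\!\!\}''$ in Sweedler notation.

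\textbf{Step 1: define and descend.} Let $F$ be the free commutative algebra on symbols $a_{ij}$, linear in $a\in A$. The formula in Proposition~\ref{prop1} defines a bracket on generators, and we extend it uniquely to a biderivation $\{-,-\}$ of $F$. We must show that the ideal $I$ generated by the relations \eqref{f17} and \eqref{f36} satisfies $\{F,I\}\subset I$ and $\{I,F\}\subset I$; by the derivation property it suffices to bracket generators against generators of $I$.

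For $c\in A$, the outer Leibniz rule gives
\[
\{\!\!\{c,ab\}\!\!\}=\{\!\!\{c,a\}\!\!\}'\otimes\{\!\!\{c,a\}\!\!\}''b+a\{\!\!\{c,b\}\!\!\}'\otimes\{\!\!\{c,b\}\!\!\}''.
\]
Applying \eqref{f17} to the products $\{\!\!\{c,a\}\!\!\}''b$ and $a\{\!\!\{c,b\}\!\!\}'$ then yields, modulo $I$,
\[
\{c_{uv},(ab)_{ij}\}=\sum_k\{c_{uv},a_{ik}\}b_{kj}+\sum_k a_{ik}\{c_{uv},b_{kj}\}.
\]
This is exactly what is needed for compatibility with \eqref{f17}. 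For \eqref{f36}, the Leibniz rule forces $\{\!\!\{c,1\}\!\!\}=0$, so $\{c_{uv},1_{ij}\}=0$, which is compatible with $1_{ij}=\delta_{ij}$. Compatibility in the first slot will follow from skew-symmetry in Step 2, using the inner derivation property of $\{\!\!\{-,b\}\!\!\}$.

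\textbf{Step 2: skew-symmetry.} Skew-symmetry of $\{\!\!\{-,-\}\!\!\}$ reads $\{\!\!\{a,b\}\!\!\}'\otimes\{\!\!\{a,b\}\!\!\}''=-\{\!\!\{b,a\}\!\!\}''\otimes\{\!\!\{b,a\}\!\!\}'$. Substituting this into the formula gives, on generators,
\[
\{a_{ij},b_{kl}\}=-\{\!\!\{b,a\}\!\!\}''_{kj}\{\!\!\{b,a\}\!\!\}'_{il}=-\{b_{kl},a_{ij}\}.
\]
Since both sides are biderivations, skew-symmetry holds on all of $F$ and hence on $\orep$.

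\textbf{Step 3: Jacobi identity.} For a skew biderivation, the Jacobiator $\{x,\{y,z\}\}+\text{cyclic}$ is a derivation in each argument. So it suffices to verify it on triples of generators $a_{ij},b_{kl},c_{uv}$. Expanding $\{a_{ij},\{b_{kl},c_{uv}\}\}$ with the Leibniz rule produces two kinds of terms:
\begin{itemize}
\item terms in which $a$ hits the first Sweedler leg of $\{\!\!\{b,c\}\!\!\}$, which are matrix entries of $\{\!\!\{a,\{\!\!\{b,c\}\!\!\}\}\!\!\}_{\mathbf L}$;
\item terms in which $a$ hits the second leg, which are matrix entries of the analogous right-nested expression. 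Using skew-symmetry, we rewrite these as $(12)$-twisted entries of $\{\!\!\{a,\{\!\!\{c,b\}\!\!\}\}\!\!\}_{\mathbf L}$-type expressions.
\end{itemize}
After summing cyclically, the total should be
\[
\Jac(a,b,c)_{\text{indices}}-\Jac(a,c,b)_{\text{permuted indices}},
\]
which is the well-known formula of Van den Bergh. Both terms vanish by the double Jacobi identity.

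The main obstacle is Step 3: the index bookkeeping. One has to track which of $i,j,k,l,u,v$ attaches to which tensor leg after each application of the permutation $(123)$, and confirm that the second-leg terms really assemble into $\Jac(a,c,b)$ rather than into some uncontrolled mixture. I would first record a lemma giving the entries of $\{a_{ij},\{b_{kl},c_{uv}\}\}$ as explicit triple products of components of $\{\!\!\{a,\{\!\!\{b,c\}\!\!\}\}\!\!\}_{\mathbf L}$ and its right analogue. Only then would I take the cyclic sum.
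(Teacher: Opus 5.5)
Your proposal is correct and is essentially Van den Bergh's original argument in \cite{van2008double}, which the paper cites rather than reproduces; inside the paper the formula is only recovered a posteriori as the case $\{\!\!\{-,-\}\!\!\}_{\id}=0$ of Corollary~\ref{cor_KR}, via Theorem~\ref{th1} and Proposition~\ref{prop3}. The bookkeeping you defer does close up, with one small correction: the second-leg terms are entries of $\{\!\!\{b,c\}\!\!\}'\otimes\{\!\!\{a,\{\!\!\{b,c\}\!\!\}''\}\!\!\}=-(123)\{\!\!\{a,\{\!\!\{c,b\}\!\!\}\}\!\!\}_{\mathbf L}$, which is a cyclic twist rather than a $(12)$ twist, and with $P_{\alpha,\beta,\gamma}:=P'_{\alpha}P''_{\beta}P'''_{\gamma}$ for $P\in A^{\otimes 3}$ one gets $\{a_{ij},\{b_{kl},c_{uv}\}\}+\text{cyclic}=\Jac(a,b,c)_{uj,il,kv}-\Jac(a,c,b)_{kj,iv,ul}$, exactly as you predicted.
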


\begin{proposition}[\cite{fairon2023around}, Theorem 4.11]\label{prop2}
Let $A$ be an associative $\kk$-algebra and $\{\!\!\{-,-\}\!\!\}$ be a right double Poisson bracket on $A$. For each $N=1,2,\dots$, the formula
\begin{equation}
\{a_{ij},b_{kl}\}:=\{\!\!\{ a,b\}\!\!\}'_{ij}\{\!\!\{ a,b\}\!\!\}''_{kl}, \qquad a,b\in A, \quad i,j,k,l\in\{1,\dots,N\},
\end{equation}
gives rise to a Poisson bracket on the commutative algebra $\orep$. 
\end{proposition}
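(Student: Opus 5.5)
We must prove Proposition~\ref{prop2}: for a right double Poisson bracket, the formula $\{a_{ij},b_{kl}\}:=\{\!\!\{a,b\}\!\!\}'_{ij}\{\!\!\{a,b\}\!\!\}''_{kl}$ defines a Poisson bracket on $\orep$.

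\textbf{Step 1: well-definedness.} Define the bracket first on the free polynomial algebra generated by the symbols $a_{ij}$, linear in $a$, and extend it to all of that algebra as a biderivation. Then check that the ideal generated by the relations \eqref{f17} and \eqref{f36} is a Poisson ideal; by the Leibniz rule it suffices to bracket a generator $c_{mn}$ with each relation. For \eqref{f17}, apply the right Leibniz rule in the second argument:
\[
\{\!\!\{c,ab\}\!\!\}=\{\!\!\{c,a\}\!\!\}' \otimes \{\!\!\{c,a\}\!\!\}''b+\{\!\!\{c,b\}\!\!\}'\otimes a\{\!\!\{c,b\}\!\!\}''.
\]
Then
\[
\{c_{mn},(ab)_{ij}\}=\{\!\!\{c,a\}\!\!\}'_{mn}(\{\!\!\{c,a\}\!\!\}''b)_{ij}+\{\!\!\{c,b\}\!\!\}'_{mn}(a\{\!\!\{c,b\}\!\!\}'')_{ij}.
\]
Expanding $(xy)_{ij}=\sum_k x_{ik}y_{kj}$ gives exactly $\sum_k\bigl(\{c_{mn},a_{ik}\}b_{kj}+a_{ik}\{c_{mn},b_{kj}\}\bigr)$, modulo the ideal. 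For \eqref{f36}, Leibniz gives $\{\!\!\{c,1\}\!\!\}=0$, so $\{c_{mn},1_{ij}\}=0$. Skew-symmetry of $\{\!\!\{-,-\}\!\!\}$ gives $\{b_{kl},a_{ij}\}=-\{a_{ij},b_{kl}\}$ directly, because swapping the tensor factors swaps the index pairs $(ij)$ and $(kl)$. Skew-symmetry handles the first argument as well, so the bracket descends to a skew biderivation on $\orep$.

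\textbf{Step 2: Jacobi.} For a biderivation, the Jacobiator is a triderivation, so it suffices to check the Jacobi identity on generators $a_{ij},b_{kl},c_{mn}$. Compute
\[
\{a_{ij},\{b_{kl},c_{mn}\}\}=\{a_{ij},x'_{kl}\}x''_{mn}+x'_{kl}\{a_{ij},x''_{mn}\},\qquad x=\{\!\!\{b,c\}\!\!\}.
\]
The first term is $\{\!\!\{a,x'\}\!\!\}'_{ij}\{\!\!\{a,x'\}\!\!\}''_{kl}x''_{mn}$, which is the matrix-coefficient evaluation, at indices $(ij),(kl),(mn)$, of $\{\!\!\{a,\{\!\!\{b,c\}\!\!\}\}\!\!\}_{\mathbf L}\in A^{\otimes3}$. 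The second term is the evaluation of the analogous "right" insertion $\{\!\!\{a,x\}\!\!\}_{\mathbf R}:=x'\otimes\{\!\!\{a,x''\}\!\!\}$, but with its factors placed in the order (second, first, third) relative to the indices. Rewrite it using skew-symmetry as a permutation of an $\mathbf L$-term. Then use the evaluation map $\operatorname{ev}(u'\otimes u''\otimes u''')=u'_{\,\cdot}u''_{\,\cdot}u'''_{\,\cdot}$, where each index pair is attached to a slot. Summing cyclically, the Jacobiator of generators becomes $\operatorname{ev}$ applied to a combination of $\sigma\{\!\!\{\cdot,\{\!\!\{\cdot,\cdot\}\!\!\}\}\!\!\}_{\mathbf L}$ terms with $\sigma\in S(3)$. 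The target is to identify this combination with $\Jac(a,b,c)-(12)\Jac(b,a,c)$, suitably permuted, which vanishes by the right double Jacobi identity.

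\textbf{Main obstacle.} The index and permutation bookkeeping in Step~2 is the hard part. We must show that the six terms (three cyclic terms, each with two Leibniz pieces) reorganize into exactly $\Jac(a,b,c)-(12)\Jac(b,a,c)$ and nothing else. The key auxiliary identity we would establish first is
\[
\{\!\!\{a,\{\!\!\{b,c\}\!\!\}\}\!\!\}_{\mathbf R}=-(12)\,\text{-conjugate of }\{\!\!\{b,\{\!\!\{a,c\}\!\!\}\}\!\!\}_{\mathbf L}\text{-type term},
\]
obtained from skew-symmetry and the left-derivation property in the first argument, analogous to Van den Bergh's $\mathbf R$-to-$\mathbf L$ conversion. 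We would verify it carefully on pure tensors, and then match terms pairwise.
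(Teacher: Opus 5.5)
Your Step 1, the reduction of Jacobi to generators, the identification of $\{a_{ij},\{b_{kl},c_{mn}\}\}$ as $\operatorname{ev}\bigl(\dbr{a}{\dbr{b}{c}}_{\mathbf L}+(12)\dbr{a}{\dbr{b}{c}}_{\mathbf R}\bigr)$ with $\operatorname{ev}(u)=u'_{ij}u''_{kl}u'''_{mn}$, and the target $\Jac(a,b,c)-(12)\Jac(b,a,c)$ are all correct. Your sentence ``rewrite it using skew-symmetry as a permutation of an $\mathbf L$-term'' also points the right way.

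The gap is in the step you single out as decisive. The auxiliary identity you propose expresses $\dbr{a}{\dbr{b}{c}}_{\mathbf R}$ through a permutation of $\dbr{b}{\dbr{a}{c}}_{\mathbf L}$. That identity is false, and no use of skew-symmetry and Leibniz rules will produce it. Take a free algebra on generators $a,b,c$ with the right double bracket given on generators by $\dbr{b}{c}=1\otimes a$ and $\dbr{a}{a}=a\otimes 1-1\otimes a$, all other generator brackets zero apart from those forced by skew-symmetry. Then $\dbr{a}{c}=0$, so every permutation of $\dbr{b}{\dbr{a}{c}}_{\mathbf L}$ vanishes. Yet $\dbr{a}{\dbr{b}{c}}_{\mathbf R}=1\otimes\dbr{a}{a}=1\otimes a\otimes 1-1\otimes 1\otimes a\neq 0$. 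The error is the pairing of arguments: the outer argument of an $\mathbf R$-term is preserved and only the two inner arguments are swapped. The Leibniz rule plays no role in this step.

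The correct conversion uses only skew-symmetry of the inner bracket. Substituting $\dbr{b}{c}'\otimes\dbr{b}{c}''=-\dbr{c}{b}''\otimes\dbr{c}{b}'$ gives
\[
\dbr{a}{\dbr{b}{c}}_{\mathbf R}=-(123)\,\dbr{a}{\dbr{c}{b}}_{\mathbf L}.
\]
Summed cyclically, the Jacobiator on generators is $\operatorname{ev}$ of the following expression, which is the $\id_3$-component of Lemma \ref{lemma2} without the aux terms:
\[
\Jac(a,b,c)+(12)\dbr{a}{\dbr{b}{c}}_{\mathbf R}+(13)\dbr{b}{\dbr{c}{a}}_{\mathbf R}+(23)\dbr{c}{\dbr{a}{b}}_{\mathbf R}.
\]
Since $(12)(123)=(23)$, $(13)(123)=(12)$ and $(23)(123)=(13)$, the three $\mathbf R$-terms equal
\[
-(23)\dbr{a}{\dbr{c}{b}}_{\mathbf L}-(12)\dbr{b}{\dbr{a}{c}}_{\mathbf L}-(13)\dbr{c}{\dbr{b}{a}}_{\mathbf L}=-(12)\Jac(b,a,c).
\]
So $\dbr{a}{\dbr{b}{c}}_{\mathbf R}$ is matched with $\dbr{a}{\dbr{c}{b}}_{\mathbf L}$, not with $\dbr{b}{\dbr{a}{c}}_{\mathbf L}$. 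The set of three $\mathbf L$-terms coincides with your guess, which is why your global target is right, but the termwise identity you planned to verify on pure tensors would fail. With the corrected identity your direct argument closes.

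For comparison, the paper does not reprove this statement; it quotes it. It then recovers it as the case $\br_{(12)}=0$ of Corollary \ref{cor_KR}, via Theorem \ref{th1} and Proposition \ref{prop3}. There, exactly this bookkeeping sits inside Lemma \ref{lemma2} and Remark \ref{rem2}.
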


\subsection{\texorpdfstring{$\S$}{S}-bimodules and the double coordinate ring}

As one can readily verify, the Poisson brackets from Propositions \ref{prop1} and \ref{prop2} above are $\GL$-invariant. It is, in fact, possible to describe all $\GL$-invariant linear maps $\orep^{\otimes n}\longrightarrow \orep$ for any $n$. This description naturally leads to an algebra $\O(A)$, which we call the double coordinate ring, see \cite{safonkin2025doublestarproduct} for details. The same algebra was introduced earlier by Ginzburg and Schedler \cite{ginzburg2010differentialoperatorsbvstructures}, where it is denoted by $\mathcal{F}(A)$ and viewed as a ``Fock space'' attached to $A$.

\begin{definition}
    Let $A$ be an associative algebra over $\kk$. We set
    \begin{align}
        \hspace{30pt} \O(A)=\bigoplus\limits_{n\geq 0}\O(A)_n,\hspace{25pt} \O(A)_n:=A^{\otimes n}\otimes \s\otimes \kk[S(n)].
    \end{align}
    The multiplication on $\O(A)$ is given by 
    \begin{align}
        (a \otimes f \otimes u)(b \otimes g \otimes v):=(a\otimes b) \otimes f\cdot g \otimes u\times v
    \end{align}
    for $a\in A^{\otimes n}$, $b\in A^{\otimes m}$, $f,g\in\s$, $u\in S(n)$, $v\in S(m)$, where $u\times v\in S(n+m)$ is the permutation that permutes the first $n$ nodes according to $u$ and the last $m$ nodes according to $v$.
\end{definition}

Each $\O(A)_n$ is an $S(n)$-bimodule via
\begin{align}
    u\cdot (a_1\otimes \ldots\otimes a_n\otimes f\otimes \tau)\cdot v:=a_{u^{-1}(1)}\otimes \ldots\otimes a_{u^{-1}(n)}\otimes f \otimes u \tau v
\end{align}
and the multiplication $\O(A)_n\otimes \O(A)_m\longrightarrow \O(A)_{n+m}$ is a map of $S(n)\times S(m)\subset S(n+m)$ bimodules. We will denote the diagonal $S(n)$-action on $\O(A)_n$ by $\operatorname{Ad}$, i.e., $\operatorname{Ad}(u)(\a):=u\cdot \a\cdot u^{-1}$ for $\a\in\O(A)_n$.

The algebra structure on $\O(A)$ does not involve the multiplication in $A$, so we need an additional structure to keep track of it: a graded linear map $\pi:\O(A)\longrightarrow \O(A)$ of degree $-1$ given by
\begin{equation}
    \pi(a\otimes f\otimes u)=\begin{cases}
        (a_2\otimes\ldots\otimes a_n)\otimes \overline{a_1}\cdot f\otimes u_*, &\text{if}\ u(1)=1,\\[5pt]
        m_{1,u(1)}(a) \otimes f\otimes u_*, &\text{if}\ u(1)>1,
    \end{cases}
\end{equation}

where 
\begin{itemize}
    \item $a=a_1\otimes\ldots\otimes a_n\in A^{\otimes n}$, $f\in\s$, and $u\in S(n)$;
    
    \item the map $m_{1,k}:A^{\otimes n}\rightarrow A^{\otimes n-1}$ multiplies the $k$-th component by the first one, i.e., $$m_{1,k}(a_1\otimes\ldots\otimes a_n)=a_2\otimes\ldots\otimes a_{k-1}\otimes a_1a_k\otimes a_{k+1}\otimes\ldots\otimes a_n.$$

    \item $u_*\in S(n-1)$ is defined as follows. Consider $u$ as a collection of $n$ nodes in the first row, joined by $n$ edges to $n$ nodes in the second row. In the case $u=\operatorname{id}_1\times w\in S(n)$ we set by definition $u_*=w\in S(n-1)$. If $u(1)>1$, we add one extra edge connecting the first vertices in the upper and lower rows. Then $u_*\in S(n-1)$ is the permutation defined by the paths in this new graph.
\end{itemize}

For example, $\pi(a\otimes f\otimes \operatorname{id}_1)=\mathds{1}\otimes\overline{a}\cdot f\otimes\operatorname{id}_0$, $\pi(a\otimes b\otimes f\otimes (12))=ab\otimes f\otimes\operatorname{id}_1$, and $\pi(\mathds{1}\otimes f\otimes\operatorname{id}_0)=0$ for $a,b\in A$, $f\in\s$.

The structure on $\O(A)$ responsible for the unit in $A$ is the operator of left multiplication by $1\otimes \mathds{1}\otimes \id_1$, where $1\in A$ and $\mathds{1}\in\s$ are the units. However, we will not use it below. 

Let us mention a link between the double coordinate ring and representation spaces. Specifically, we explain how to produce elements of $\orep$ out of homogeneous elements of $\O(A)$ (and a tuple of matrices). Assume that two tuples of indices ranging from $1$ to $N$ are given, say $\mathbf{i}=(i_1,\ldots,i_n)$ and $\mathbf{j}=(j_1,\ldots,j_n)$. Take any $a_1,\ldots, a_n\in A$, a permutation $u\in S(n)$, and any $f_1,\ldots, f_m\in A_{\natural}$. Then the element $\a=(a_1\otimes\ldots\otimes a_n) \otimes f_1\cdot\ldots\cdot f_m\otimes u$ belongs to $\O(A)_n$ and the element of the coordinate ring $\orep$ corresponding to $\mathbf{i}$, $\mathbf{j}$, and $\a$, denoted by $\a_{\mathbf{i}\mathbf{j}}$, is by definition
\begin{equation}\label{f28}
    \a_{\mathbf{i}\mathbf{j}}:=(a_1)_{i_{u^{-1}(1)}j_1}\ldots (a_n)_{i_{u^{-1}(n)}j_n}\tr(f_1)\ldots\tr(f_m)\in\orep,
\end{equation}
where $\tr(f)=\sum\limits_{i=1}^Nf_{ii}$.

\subsection{Di-twisted Poisson brackets on the double coordinate ring}

Below, for any permutation $\tau\in S(n)$ and any integers $k_1,\ldots,k_n\in\mathbb{Z}_{\geq 0}$, we denote by $\tau^{k_1,\ldots,k_n}\in S(k_1+\ldots+k_n)$ the permutation that splits all $k_1+\ldots+k_n$ nodes in $n$ blocks of lengths $k_1,\ldots,k_n$, and permutes these blocks according to $\tau$.

\begin{definition}[Ginzburg-Schedler, \cite{ginzburg2010differentialoperatorsbvstructures}]\label{def2}
    A \textit{di-twisted Poisson bracket}\footnote{The authors called it \textit{wheeled Poisson bracket}.} on $\O(A)$ is a homogeneous graded linear map $\{-,-\}:\O(A)\otimes \O(A)\longrightarrow\O(A)$ of degree zero such that 
    
    \begin{itemize}

    \item $\{-,-\}:\O(A)_n\otimes \O(A)_m\longrightarrow\O(A)_{n+m}$ is a map of $S(n)\times S(m)\subset S(n+m)$ bimodules
    \vspace{7pt}

    \item for any homogeneous $\a,\b \in\O(A)$
    \begin{align}
        \{\pi(\a),\b\}=\pi\{\a,\b\}
    \end{align}
    
    \item \textit{Skew-symmetry:} for any homogeneous $\a,\b\in\O(A)$
    \begin{equation}
    \{\b,\a\}=-\operatorname{Ad}((12)^{|\a|,|\b|})\{\a,\b\}.    
    \end{equation}

    \item \textit{Leibniz rule:} for any homogeneous $\a,\b,\cc\in\O(A)$
    \begin{align}
        \{\a,\b\cc\}&=\{\a,\b\}\cc+\operatorname{Ad}((12)^{|\b|,|\a|,|\cc|})\Bigl(\b\{\a,\cc\}\Bigr),\\[7pt]
        \{\a\b,\cc\}&=\operatorname{Ad}((23)^{|\a|,|\cc|,|\b|})\Bigl(\{\a,\cc\}\b\Bigr)+\a\{\b,\cc\}
    \end{align}

    \item \textit{Jacobi identity:} for any homogeneous $\a,\b,\cc\in\O(A)$
    \begin{equation}
        \bigl\{ \a,\{\b,\cc\}\bigr\}+\operatorname{Ad}((123)^{|\b|,|\cc|,|\a|}) \bigl\{ \b,\{ \cc,\a\}\bigr\} +\operatorname{Ad}((132)^{|\cc|,|\a|,|\b|}) \bigl\{ \cc,\{ \a,\b\}\bigr\}=0.
    \end{equation}
    \end{itemize}

\end{definition}

Note that any di-twisted Poisson bracket on $\O(A)$ is uniquely determined by its values on $A\otimes A\subset \O(A)_1\otimes \O(A)_1$.

\begin{proposition}[\cite{ginzburg2010differentialoperatorsbvstructures}]\label{prop3}
    Let $A$ be an associative algebra and let $\{-,-\}$ be a di-twisted Poisson bracket on $\O(A)$. For each $N=1,2,\dots$, the formula
\begin{equation}
\{\a_{\mathbf{ij}},\b_{\mathbf{kl}}\}_N:=\{\a,\b\}_{\mathbf{i}\sqcup\mathbf{k},\mathbf{j}\sqcup\mathbf{l}},
\end{equation}
where $\sqcup$ stands for concatenation of tuples, gives rise to a $\GL$-invariant Poisson bracket $\{-,-\}_N$ on the commutative algebra $\orep$.
\end{proposition}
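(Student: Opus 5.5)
We need to prove Proposition~\ref{prop3}: given a di-twisted Poisson bracket, the formula defines a $\GL$-invariant Poisson bracket on $\orep$. The plan is to realize $\orep$ together with all the evaluations $\a\mapsto\a_{\mathbf{ij}}$ as a quotient-compatible image of $\O(A)$. Then the axioms of Definition~\ref{def2} will translate term by term into well-definedness, skew-symmetry, the Leibniz rule and the Jacobi identity for $\{-,-\}_N$.

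\textbf{Step 1 (the evaluation map).} For each $n$, consider the map $\operatorname{ev}_n\colon \O(A)_n\to \orep\otimes \operatorname{Mat}_N^{\otimes n}$, $\a\mapsto \sum_{\mathbf{i},\mathbf{j}}\a_{\mathbf{ij}}E_{\mathbf{ij}}$, defined by \eqref{f28}. I will check four properties.
\begin{enumerate}
\item[(i)] $\operatorname{ev}$ is multiplicative: $(\a\b)_{\mathbf{i}\sqcup\mathbf{k},\mathbf{j}\sqcup\mathbf{l}}=\a_{\mathbf{ij}}\b_{\mathbf{kl}}$. This is immediate from the definition of $u\times v$.
\item[(ii)] $\operatorname{ev}$ intertwines the $S(n)$-bimodule structure with permutation of the index tuples $\mathbf i$ and $\mathbf j$ respectively.
\item[(iii)] The map $\pi$ corresponds to contracting the index $j_1$ with the index $i_{u^{-1}(1)}$, i.e.\ setting them equal and summing over them. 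For $u(1)=1$ this produces $\tr(a_1)$. For $u(1)>1$, relation \eqref{f17} turns the contraction into $(a_1a_{u(1)})$, and the path description of $u_*$ gives the new index pattern.
\item[(iv)] The elements $\a_{\mathbf{ij}}$ span $\orep$ and satisfy its relations. Every relation among them comes from multilinearity, from \eqref{f17}, i.e.\ from $\pi$-images, and from reordering factors, i.e.\ from the bimodule action.
\end{enumerate}

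\textbf{Step 2 (well-definedness).} Define $\{a_{ij},b_{kl}\}_N$ on generators using $\a=a\otimes\mathds 1\otimes\id_1$. Extend to $\orep$ by the Leibniz rule, and compare with the general formula $\{\a_{\mathbf{ij}},\b_{\mathbf{kl}}\}_N$. Consistency has three parts, each resting on Step 1 and an axiom of Definition~\ref{def2}.
\begin{enumerate}
\item[(a)] Compatibility with relation \eqref{f17} follows from $\{\pi(\a),\b\}=\pi\{\a,\b\}$ together with (iii), applied to $\a=a\otimes b\otimes(12)$ and to the trace generators.
\item[(b)] Independence of the presentation of a product follows from the bimodule equivariance and (ii).
\item[(c)] The Leibniz rule for $\{-,-\}_N$ on products $\a_{\mathbf{ij}}\b_{\mathbf{kl}}$ follows from the di-twisted Leibniz rule. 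The twist $\operatorname{Ad}((12)^{|\b|,|\a|,|\cc|})$ is exactly the index reshuffle needed to put $\mathbf{i}\sqcup\mathbf{k}$ in the right order, by (ii).
\end{enumerate}
The unit relation \eqref{f36} needs $\{1,-\}=0$, which follows from Leibniz applied to $1\cdot1$.

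\textbf{Step 3 (axioms and invariance).} Skew-symmetry of $\{-,-\}_N$ follows from di-twisted skew-symmetry: $\operatorname{Ad}((12)^{|\a|,|\b|})$ just swaps the index blocks, and the coordinate functions commute. Similarly, the Jacobi identity evaluated at $\mathbf{i}\sqcup\mathbf{k}\sqcup\mathbf{p},\mathbf{j}\sqcup\mathbf{l}\sqcup\mathbf{q}$ gives the cyclic sum of iterated brackets. Here one must check that $\{\a_{\mathbf{ij}},\{\b,\cc\}_{\mathbf{kp},\mathbf{lq}}\}_N=\{\a,\{\b,\cc\}\}_{\mathbf{ikp},\mathbf{jlq}}$, which is just the definition applied to the element $\{\b,\cc\}$. $\GL$-invariance holds because $g\in\GL$ acts on $\a_{\mathbf{ij}}$ through conjugation of each matrix slot. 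Since $\{\a,\b\}$ is again an element of $\O(A)$, evaluated by the same equivariant recipe, the bracket commutes with the action.

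\textbf{Main obstacle.} I expect the hardest point to be Step 2(a)/(iii): verifying carefully that $\pi$, with its graph-theoretic $u_*$, corresponds exactly to index contraction, including the case where the contracted index lies in the $\b$-block after bracketing. I would prove (iii) first as a standalone lemma by induction on the cycle structure of $u$, tracking one cycle at a time. Then I would reduce the general $\pi$-compatibility to generators $a\otimes b\otimes(12)$ and $a\otimes\mathds 1\otimes\id_1$ using the Leibniz rule, so that it suffices to check that $\{(ab)_{ij},c_{kl}\}_N=\sum_m\{a_{im}b_{mj},c_{kl}\}_N$ and that the trace relation is respected.
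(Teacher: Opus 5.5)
The paper does not prove this proposition. It is imported from Ginzburg--Schedler, and the paper's only hints at the mechanism (in the proof of Corollary~\ref{cor1}) are two. One is categorical: $\O(A)$ with a di-twisted bracket is a commutative algebra with a skew-symmetric biderivation in the category of diagonal $\S$-bimodules. The other is the evaluation identity $\Jac(\a,\b,\cc)_{\mathbf{i}\sqcup\mathbf{k}\sqcup\mathbf{p},\mathbf{j}\sqcup\mathbf{l}\sqcup\mathbf{q}}=\operatorname{Jac}(\a_{\mathbf{ij}},\b_{\mathbf{kl}},\cc_{\mathbf{pq}})$.

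Your proposal is the explicit, elementwise version of that picture, and it is sound. You define the biderivation on the generators $a_{ij}$. You check that it respects $(ab)_{ij}=\sum_m a_{im}b_{mj}$ in the first slot via $\{\pi(\a),\b\}=\pi\{\a,\b\}$ applied to $a\otimes b\otimes\mathds{1}\otimes(12)$, and in the second slot via generator-level skew-symmetry. You then identify the result with $\{\a,\b\}_{\mathbf{i}\sqcup\mathbf{k},\mathbf{j}\sqcup\mathbf{l}}$ by induction on degree. This is exactly what makes the displayed formula well defined, and your Step~3 reproduces the Jacobiator identity quoted in the paper.

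Your route makes visible that well-definedness on $\orep$ amounts to $\pi$-compatibility plus the unit relation. The categorical route gives skew-symmetry, Leibniz rule and Jacobi identity at once, but only after one knows that evaluation is monoidal and turns $\pi$ into a contraction, which is your Step~1.

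On the unit: the Leibniz rule in $\O(A)$ applied to $1\cdot 1$ gives nothing, because that product lives in degree $2$. What works is the product relation for $a=b=1$ together with $(1)_{mj}=\delta_{mj}$, which gives $\{1_{ij},c_{kl}\}_N=2\{1_{ij},c_{kl}\}_N$.

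Two bookkeeping corrections to Step~1 are needed.

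First, (iii) as written is false. The indices $j_1$ and $i_{u^{-1}(1)}$ belong to the single factor $(a_1)_{i_{u^{-1}(1)}j_1}$, so contracting them always produces $\tr(a_1)$. For example, take $\a=a\otimes b\otimes\mathds{1}\otimes(12)$, so that $\a_{(i_1,i_2),(j_1,j_2)}=a_{i_2j_1}b_{i_1j_2}$. Your rule gives $\tr(a)\,b_{i_1j_2}$ instead of $(ab)_{i_2j_2}$.

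The correct statement is uniform in $u$: $\pi(\a)_{\mathbf{i}',\mathbf{j}'}=\sum_{m=1}^N\a_{(m)\sqcup\mathbf{i}',(m)\sqcup\mathbf{j}'}$, i.e.\ one contracts $i_1$ with $j_1$. For $u(1)>1$ the only affected factors are $(a_1)_{i_{u^{-1}(1)}j_1}$ and $(a_{u(1)})_{i_1j_{u(1)}}$. Summing gives $(a_1a_{u(1)})_{i_{u^{-1}(1)}j_{u(1)}}$ in slot $u(1)$, and the surviving row indices are redistributed exactly by the path rule defining $u_*$. So what you flag as the main obstacle is a one-line check, with no induction on cycles needed.

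Second, in (ii) the pairing is reversed. One has $(u\a v)_{\mathbf{ij}}=\a_{\mathbf{i}\circ v^{-1},\,\mathbf{j}\circ u}$, where $(\mathbf{i}\circ w)_r:=i_{w(r)}$. So the left action moves $\mathbf{j}$ and the right action moves $\mathbf{i}$. In particular $(\operatorname{Ad}(\sigma)\cc)_{\mathbf{ij}}=\cc_{\mathbf{i}\circ\sigma,\,\mathbf{j}\circ\sigma}$, which is the form your Steps~2(c) and~3 actually use.
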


In a certain asymptotical sense, any $\GL$-invariant Poisson bracket on $\orep$ can be obtained in such way, see Proposition 6.1 and Theorem 5.1 in \cite{safonkin2025doublestarproduct}. Roughly speaking, a $\GL$-invariant Poisson bracket on $\orep$ comes from a di-twisted Poisson bracket on $\O(A)$, if the former depends on $N$ in a controllable way, e.g., polynomially. 

Now it is natural to ask ``How to link Proposition \ref{prop3} to Proposition \ref{prop1} and Proposition \ref{prop2}?'' The connection between di-twisted Poisson brackets on $\O(A)$ and double Poisson brackets on $A$ was mentioned without proof in \cite{ginzburg2010differentialoperatorsbvstructures}, see Remark 3.5.19. Namely, any double Poisson bracket on $A$ can be canonically extended to a di-twisted Poisson bracket on $\O(A)$. This was proved in a recent paper \cite{fernández2025symplecticwheelgebrasnoncommutativegeometry}, see Proposition 6.8 there. Below, we establish the remaining connection which allows us to unite double Poisson brackets and right double Poisson brackets. 

 \addtocontents{toc}{\protect\setcounter{tocdepth}{2}}

\section{Coupled double Poisson brackets}\label{sec_coupled}

A pair of coupled double Poisson brackets is, loosely speaking, a pair consisting of a double Poisson bracket in the sense of Van den Bergh and a right double bracket in the sense of Fairon-McCulloch, subject to certain compatibility constraints that take the form of the vanishing of certain double cross-Jacobiators. However, we will extend both of the definitions to the case when there is a third tensor factor $\s$ in the target of the brackets. The necessity of this is clear from the following example.

\begin{example}
    Let $A=\kk[x]$ be the algebra of polynomials in one variable. Then $\rep$ can be conveniently identified with $\mathfrak{gl}_N^*$, so we may identify $\orep$ with $\operatorname{S}(\mathfrak{gl}_N)$. Recall that the commutator bracket on $\operatorname{S}(\mathfrak{gl}_N)$ comes from the linear double Poisson bracket on $A$ defined by $\{\!\!\{x,x\}\!\!\}=1\otimes x-x\otimes 1$. Next, consider the Poisson bracket given on generators by 
    \begin{align}
        \{E_{ij},E_{kl}\}_N:=[E_{ij},E_{kl}] \sum\limits_{p=1}^N E_{pp}=(\delta_{kj}E_{il}-\delta_{il}E_{kj})\sum\limits_{p=1}^N E_{pp}.
    \end{align}
    This bracket is essentially independent of $N$\footnote{In \cite{safonkin2025doublestarproduct} it was called \textit{stability in $N$}}, so it must come from a di-twisted Poisson bracket on $\O(A)$. This di-twisted Poisson bracket $\{-,-\}$ is uniquely defined on $A\otimes A\subset \O(A)_1\otimes \O(A)_1$ by
    \begin{align}
        \{x,x\}=(1\otimes x-x\otimes 1)\otimes \overline{x}\otimes (12).
    \end{align}
\end{example}

Below we will consider certain linear maps $\{\!\!\{-,-\}\!\!\}:A\otimes A\longrightarrow A\otimes A\otimes \s$. We will use the following Sweedler's notation
\begin{align}
    \{\!\!\{a,b\}\!\!\}=\{\!\!\{a,b\}\!\!\}'\otimes \{\!\!\{a,b\}\!\!\}''\otimes \{\!\!\{a,b\}\!\!\}''',
\end{align}
where $\{\!\!\{a,b\}\!\!\}',\{\!\!\{a,b\}\!\!\}''\in A$ and $\{\!\!\{a,b\}\!\!\}'''\in \s$.

We will speak about $A$-bimodule structures on $A\otimes A\otimes \s$, apply the flip that permutes the first two components, etc., as if there is no third factor $\s$---this simplifies notation and will not lead to confusion; e.g., $(12)\{\!\!\{a,b\}\!\!\}=\{\!\!\{a,b\}\!\!\}''\otimes \{\!\!\{a,b\}\!\!\}'\otimes \{\!\!\{a,b\}\!\!\}'''$.

\begin{definition}\label{def15}
    Let $\{\!\!\{-,-\}\!\!\}:A\otimes A\longrightarrow A\otimes A\otimes \s$ be a cyclically skew-symmetric linear map, i.e., a linear map such that $\{\!\!\{a,b\}\!\!\}=-(12)\{\!\!\{b,a\}\!\!\}$ for any $a,b\in A$. We will say that $\{\!\!\{-,-\}\!\!\}$ is a \textit{double bracket} if it is a derivation in its second argument with respect to the outer $A$-bimodule structure on $A\otimes A\otimes \s$, i.e.,
    \begin{align}
        \{\!\!\{a,bc\}\!\!\}=\{\!\!\{a,b\}\!\!\}'\otimes \{\!\!\{a,b\}\!\!\}''c\otimes \{\!\!\{a,b\}\!\!\}'''+b\{\!\!\{a,c\}\!\!\}'\otimes \{\!\!\{a,c\}\!\!\}''\otimes \{\!\!\{a,c\}\!\!\}''',
    \end{align}
    and a \textit{right double bracket} if it is a derivation in its second argument with respect to the right $A$-bimodule structure on $A\otimes A\otimes \s$, i.e.,
    \begin{align}
        \{\!\!\{a,bc\}\!\!\}=\{\!\!\{a,b\}\!\!\}'\otimes \{\!\!\{a,b\}\!\!\}''c\otimes \{\!\!\{a,b\}\!\!\}'''+\{\!\!\{a,c\}\!\!\}\otimes b\{\!\!\{a,c\}\!\!\}''\otimes \{\!\!\{a,c\}\!\!\}'''.
    \end{align}
\end{definition}

One can easily check that a double bracket $\{\!\!\{-,-\}\!\!\}$ gives rise to a bi-derivation on $\orep$ by the usual formula from \cite{van2008double} 
\begin{align}
(a_{ij},b_{kl})\mapsto \{\!\!\{a,b\}\!\!\}_{kj,il}:=\{\!\!\{a,b\}\!\!\}'_{kj}\{\!\!\{a,b\}\!\!\}''_{il}\tr\Big(\{\!\!\{a,b\}\!\!\}'''\Big)    
\end{align}
and a right double bracket $\{\!\!\{-,-\}\!\!\}$ gives rise to a similar bi-derivation by the formula from Section 4.3 in \cite{fairon2023around} 
\begin{align}
(a_{ij},b_{kl})\mapsto \{\!\!\{a,b\}\!\!\}_{ij,kl}:=\{\!\!\{a,b\}\!\!\}'_{ij}\{\!\!\{a,b\}\!\!\}''_{kl}\tr\Big(\{\!\!\{a,b\}\!\!\}'''\Big)    
\end{align}

Assume from now on that we are given a double bracket $\{\!\!\{-,-\}\!\!\}_{(12)}$ and a right double bracket $\{\!\!\{-,-\}\!\!\}_{\id}$. We set 
\begin{align}
    &\{-,-\}_{\id}:A\otimes A_{\natural}\longrightarrow A\otimes \s,\\[7pt]
    &\{a,\overline{f}\}_{\id}=\{\!\!\{a,f\}\!\!\}'_{\id}\otimes \overline{\{\!\!\{a,f\}\!\!\}''_{\id}}\cdot \{\!\!\{a,f\}\!\!\}'''_{\id}
\end{align}
and 
\begin{align}
    &\{-,-\}_{(12)}:A\otimes A_{\natural}\longrightarrow A\otimes \s,\\[7pt]
    &\{a,\overline{f}\}_{(12)}=\{\!\!\{a,f\}\!\!\}''_{(12)}\{\!\!\{a,f\}\!\!\}_{(12)}'\otimes \{\!\!\{a,f\}\!\!\}'''_{(12)},
\end{align}
where $f\in A$ is an arbitrary lift of $\overline{f}\in A_{\natural}$. One can easily check that these maps are well-defined by using the outer and right Leibniz rules.

Next, we extend these maps to $A\otimes \s\longrightarrow A\otimes \s$ by the (commutative) Leibniz rule for $\s$, i.e.,
  \begin{align}
    \{a,\overline{f_1}\cdot\ldots\cdot\overline{f_r}\}_{\id \, \text{or}\, (12)}&=\sum\limits_{k=1}^r(\overline{f_1}\cdot\ldots\cdot\widehat{\overline{f_k}}\cdot\ldots\cdot\overline{f_r})\{a,\overline{f_k}\}_{\id\, \text{or}\, (12)},
\end{align}
where the $\s$-module structure on $A\otimes \s$ ignores the first factor.

On the other hand, one can check that each of the maps $\{-,-\}_{\id}$, $\{-,-\}_{(12)}$ descends to a well-defined map $A_{\natural}\otimes A_{\natural}\longrightarrow \s$. Namely, we set 
\begin{align}\label{f101}
    &\{-,-\}_{\id}:A_{\natural}\otimes A_{\natural}\longrightarrow \s,\\[7pt]
    &\{\overline{f},\overline{g}\}_{\id}=\overline{\{\!\!\{f,g\}\!\!\}'_{\id}}\cdot\overline{\{\!\!\{f,g\}\!\!\}''_{\id}}\cdot \{\!\!\{f,g\}\!\!\}'''_{\id}
\end{align}
and
\begin{align}\label{f102}
    &\{-,-\}_{(12)}:A_{\natural}\otimes A_{\natural}\longrightarrow \s,\\[7pt]
    &\{\overline{f},\overline{g}\}_{(12)}=\overline{\{\!\!\{f,g\}\!\!\}'_{(12)}\{\!\!\{f,g\}\!\!\}_{(12)}''}\cdot \{\!\!\{f,g\}\!\!\}'''_{(12)}.
\end{align}
Note the difference between them; note also that both of them are skew-symmetric.

Now we can extend each of them to a map $\s\otimes\s\longrightarrow\s$ by the commutative Leibniz rule in both arguments.

So, the first argument of the maps $\{-,-\}_{\id}$ and $\{-,-\}_{(12)}$ belongs either to $A$ or to $\s$. This ambiguity causes no confusion, as it will always be clear what case applies.

For $x=\id$ or $(12)$, $y=\id$ or $(12)$, and $a,b,c\in A$ we set
\begin{align}
    &\Big\{\!\!\!\Big\{a,\{\!\!\{b,c\}\!\!\}_x\Big\}\!\!\!\Big\}_y^{\text{left}}=\{\!\!\{a,\{\!\!\{b,c\}\!\!\}'_{x}\}\!\!\}'_{y}\otimes \{\!\!\{a,\{\!\!\{b,c\}\!\!\}'_{x}\}\!\!\}''_{y}\otimes \{\!\!\{b,c\}\!\!\}''_{x}\otimes \{\!\!\{a,\{\!\!\{b,c\}\!\!\}'_{x}\}\!\!\}'''_{y}\cdot\{\!\!\{b,c\}\!\!\}'''_{x}\in A^{\otimes 3}\otimes \s\\[10pt]
    &\Big\{\!\!\!\Big\{a,\{\!\!\{b,c\}\!\!\}_x\Big\}\!\!\!\Big\}_y^{\text{right}}=\{\!\!\{b,c\}\!\!\}'_{x}\otimes \{\!\!\{a,\{\!\!\{b,c\}\!\!\}''_{x}\}\!\!\}'_{y}\otimes \{\!\!\{a,\{\!\!\{b,c\}\!\!\}''_{x}\}\!\!\}''_{y}\otimes \{\!\!\{a,\{\!\!\{b,c\}\!\!\}''_{x}\}\!\!\}'''_{y}\cdot\{\!\!\{b,c\}\!\!\}'''_{x}\in A^{\otimes 3}\otimes \s\\[10pt]
    &\Big\{\!\!\!\Big\{a,\{\!\!\{b,c\}\!\!\}_x\Big\}\!\!\!\Big\}_y^{\text{aux}}=\{\!\!\{b,c\}\!\!\}'_{x}\otimes\{\!\!\{b,c\}\!\!\}''_{x}\otimes \Big\{a,\{\!\!\{b,c\}\!\!\}'''_{x}\Big\}'_{y}\otimes \Big\{a,\{\!\!\{b,c\}\!\!\}'''_{x}\Big\}''_{y}\in A^{\otimes 3}\otimes \s
\end{align}

\begin{definition}\label{def16}
    We say that a right double bracket $\{\!\!\{-,-\}\!\!\}_{\id}:A\otimes A\longrightarrow A\otimes A\otimes \s$ and a double bracket $\{\!\!\{-,-\}\!\!\}_{(12)}:A\otimes A\longrightarrow A\otimes A\otimes \s$ are \textit{coupled double Poisson brackets} if for any $a,b,c\in A$ one has 
    \begin{align}\leqnomode
    1)\hspace{20pt}&\Big\{\!\!\!\Big\{a,\{\!\!\{b,c\}\!\!\}_{\id}\Big\}\!\!\!\Big\}_{\id}^{\text{left}}+(123)\Big\{\!\!\!\Big\{b,\{\!\!\{c,a\}\!\!\}_{\id}\Big\}\!\!\!\Big\}_{\id}^{\text{left}}+(132)\Big\{\!\!\!\Big\{c,\{\!\!\{a,b\}\!\!\}_{\id}\Big\}\!\!\!\Big\}_{\id}^{\text{left}}\\[7pt]
&+(12)\Bigg[\Big\{\!\!\!\Big\{a,\{\!\!\{b,c\}\!\!\}_{\id}\Big\}\!\!\!\Big\}_{\id}^{\text{right}}+(132)\Big\{\!\!\!\Big\{b,\{\!\!\{c,a\}\!\!\}_{\id}\Big\}\!\!\!\Big\}_{\id}^{\text{right}}+(123)\Big\{\!\!\!\Big\{c,\{\!\!\{a,b\}\!\!\}_{\id}\Big\}\!\!\!\Big\}_{\id}^{\text{right}}\Bigg]\\[7pt]
&+(123)\Bigg[\Big\{\!\!\!\Big\{a,\{\!\!\{b,c\}\!\!\}_{\id}\Big\}\!\!\!\Big\}_{\id}^{\text{aux}}+(123)\Big\{\!\!\!\Big\{b,\{\!\!\{c,a\}\!\!\}_{\id}\Big\}\!\!\!\Big\}_{\id}^{\text{aux}}+(132)\Big\{\!\!\!\Big\{c,\{\!\!\{a,b\}\!\!\}_{\id}\Big\}\!\!\!\Big\}_{\id}^{\text{aux}}\Bigg]\\[7pt]
&+(123)\Bigg[\Big\{\!\!\!\Big\{a,\{\!\!\{b,c\}\!\!\}_{\id}\Big\}\!\!\!\Big\}_{(12)}^{\text{aux}}+(123)\Big\{\!\!\!\Big\{b,\{\!\!\{c,a\}\!\!\}_{\id}\Big\}\!\!\!\Big\}_{(12)}^{\text{aux}}+(132)\Big\{\!\!\!\Big\{c,\{\!\!\{a,b\}\!\!\}_{\id}\Big\}\!\!\!\Big\}_{(12)}^{\text{aux}}\Bigg]=0,\\[25pt]
    2)\hspace{20pt}&\Big\{\!\!\!\Big\{a,\{\!\!\{b,c\}\!\!\}_{\id}\Big\}\!\!\!\Big\}_{(12)}^{\text{left}}+(13)\Big\{\!\!\!\Big\{b,\{\!\!\{c,a\}\!\!\}_{\id}\Big\}\!\!\!\Big\}_{(12)}^{\text{right}}+(132)\Big\{\!\!\!\Big\{c,\{\!\!\{a,b\}\!\!\}_{(12)}\Big\}\!\!\!\Big\}_{\id}^{\text{left}}\\[7pt]
&\hspace{90pt}(23)\Big\{\!\!\!\Big\{c,\{\!\!\{a,b\}\!\!\}_{(12)}\Big\}\!\!\!\Big\}_{\id}^{\text{right}}+\Big\{\!\!\!\Big\{c,\{\!\!\{a,b\}\!\!\}_{(12)}\Big\}\!\!\!\Big\}_{\id}^{\text{aux}}+\Big\{\!\!\!\Big\{c,\{\!\!\{a,b\}\!\!\}_{(12)}\Big\}\!\!\!\Big\}_{(12)}^{\text{aux}}=0\\[25pt]
    3)\hspace{20pt}&\Big\{\!\!\!\Big\{a,\{\!\!\{b,c\}\!\!\}_{(12)}\Big\}\!\!\!\Big\}_{(12)}^{\text{left}}+(123)\Big\{\!\!\!\Big\{b,\{\!\!\{c,a\}\!\!\}_{(12)}\Big\}\!\!\!\Big\}_{(12)}^{\text{left}}+(132)\Big\{\!\!\!\Big\{c,\{\!\!\{a,b\}\!\!\}_{(12)}\Big\}\!\!\!\Big\}_{(12)}^{\text{left}}=0\\[25pt]
\end{align}
\end{definition}

\begin{remark}\label{rem2}
Let $\{\!\!\{-,-\}\!\!\}_{\id}$, $\{\!\!\{-,-\}\!\!\}_{(12)}$ be a pair of coupled double Poisson brackets.
    \begin{enumerate}[label=\theenumi),leftmargin=3ex]
        \item If $\{\!\!\{-,-\}\!\!\}_{\id}$ is zero, then the double bracket $\{\!\!\{-,-\}\!\!\}_{(12)}$ satisfies
        \begin{align}
            \Big\{\!\!\!\Big\{a,\{\!\!\{b,c\}\!\!\}_{(12)}\Big\}\!\!\!\Big\}_{(12)}^{\text{left}}+(123)\Big\{\!\!\!\Big\{b,\{\!\!\{c,a\}\!\!\}_{(12)}\Big\}\!\!\!\Big\}_{(12)}^{\text{left}}+(132)\Big\{\!\!\!\Big\{c,\{\!\!\{a,b\}\!\!\}_{(12)}\Big\}\!\!\!\Big\}_{(12)}^{\text{left}}=0
        \end{align}
        and
        \begin{align}
            \Big\{\!\!\!\Big\{a,\{\!\!\{b,c\}\!\!\}_{(12)}\Big\}\!\!\!\Big\}_{(12)}^{\text{aux}}=0
        \end{align}
        
        Particularly, if $\{\!\!\{-,-\}\!\!\}_{(12)}$ lands in $A\otimes A\subset A\otimes A\otimes \s$, then $\{\!\!\{-,-\}\!\!\}_{(12)}$ is a double Poisson bracket in the sense of Van den Bergh.

        \item If $\{\!\!\{-,-\}\!\!\}_{(12)}$ is zero, then the right double bracket $\{\!\!\{-,-\}\!\!\}_{\id}$ satisfies
        \begin{align}
            &\Big\{\!\!\!\Big\{a,\{\!\!\{b,c\}\!\!\}_{\id}\Big\}\!\!\!\Big\}_{\id}^{\text{left}}+(123)\Big\{\!\!\!\Big\{b,\{\!\!\{c,a\}\!\!\}_{\id}\Big\}\!\!\!\Big\}_{\id}^{\text{left}}+(132)\Big\{\!\!\!\Big\{c,\{\!\!\{a,b\}\!\!\}_{\id}\Big\}\!\!\!\Big\}_{\id}^{\text{left}}\\[7pt]
            &\hspace{50pt}+(12)\Bigg[\Big\{\!\!\!\Big\{a,\{\!\!\{b,c\}\!\!\}_{\id}\Big\}\!\!\!\Big\}_{\id}^{\text{right}}+(132)\Big\{\!\!\!\Big\{b,\{\!\!\{c,a\}\!\!\}_{\id}\Big\}\!\!\!\Big\}_{\id}^{\text{right}}+(123)\Big\{\!\!\!\Big\{c,\{\!\!\{a,b\}\!\!\}_{\id}\Big\}\!\!\!\Big\}_{\id}^{\text{right}}\Bigg]\\[7pt]
            &\hspace{50pt}+(123)\Bigg[\Big\{\!\!\!\Big\{a,\{\!\!\{b,c\}\!\!\}_{\id}\Big\}\!\!\!\Big\}_{\id}^{\text{aux}}+(123)\Big\{\!\!\!\Big\{b,\{\!\!\{c,a\}\!\!\}_{\id}\Big\}\!\!\!\Big\}_{\id}^{\text{aux}}+(132)\Big\{\!\!\!\Big\{c,\{\!\!\{a,b\}\!\!\}_{\id}\Big\}\!\!\!\Big\}_{\id}^{\text{aux}}\Bigg]=0
        \end{align}

        Particularly, if $\{\!\!\{-,-\}\!\!\}_{\id}$ lands in $A\otimes A\subset A\otimes A\otimes \s$, then $\{\!\!\{-,-\}\!\!\}_{\id}$ is a right double Poisson bracket in the sense of Fairon-McCulloch.

        \item If both $\{\!\!\{-,-\}\!\!\}_{\id}$ and $\{\!\!\{-,-\}\!\!\}_{(12)}$ land in $A\otimes A\subset A\otimes A\otimes \s$, then $\{\!\!\{-,-\}\!\!\}_{\id}$ is a right double Poisson bracket in the sense of Fairon-McCulloch, $\{\!\!\{-,-\}\!\!\}_{(12)}$ is a double Poisson bracket in the sense of Van den Bergh, and they must satisfy the following compatibility condition
        \begin{align}\label{f2}
            &\Big\{\!\!\!\Big\{a,\{\!\!\{b,c\}\!\!\}_{\id}\Big\}\!\!\!\Big\}_{(12)}^{\text{left}}+(13)\Big\{\!\!\!\Big\{b,\{\!\!\{c,a\}\!\!\}_{\id}\Big\}\!\!\!\Big\}_{(12)}^{\text{right}}+(132)\Big\{\!\!\!\Big\{c,\{\!\!\{a,b\}\!\!\}_{(12)}\Big\}\!\!\!\Big\}_{\id}^{\text{left}}\\
            &\hspace{260pt}+(23)\Big\{\!\!\!\Big\{c,\{\!\!\{a,b\}\!\!\}_{(12)}\Big\}\!\!\!\Big\}_{\id}^{\text{right}}=0
        \end{align}
    \end{enumerate}
\end{remark}

\begin{remark}
    Consider the third case of the previous remark, that is $\br_{\id},\br_{(12)}:A\otimes A\longrightarrow A\otimes A$. If one denotes $\RR=\br_{(12)}$ and $r=\br_{\id}$, then the condition that $\br_{(12)}$ and $\br_{\id}$ form a pair of coupled double Poisson brackets can be rephrased by saying that
    \begin{itemize}
        \item $\RR$ is skew-symmetric $\RR^{12}=-\RR^{21}$ and satisfies the associative Yang-Baxter equation
    \begin{align*}\tag{AYBE}
	   \mathcal R^{12}\mathcal R^{13}-\mathcal R^{23}\mathcal R^{12} +\mathcal R^{13}\mathcal R^{23}=0,
    \end{align*}
where $\mathcal R^{ab}$ denotes $\mathcal R$ acting on the $a$-th and $b$-th factors in $A^{\otimes 3}$, e.g., $\mathcal{R}^{21}:=(12)\mathcal{R}(12)$

\item $r$ is skew-symmetric $r^{12}=-r^{21}$ and satisfies the classical Yang-Baxter equation
\begin{align}\tag{CYBE}
    [r^{12},r^{13}] + [r^{12},r^{23}] + [r^{13},r^{23}] =0
\end{align}

\item they satisfy the following compatibility condition
        \begin{align}\label{f33}
            [\RR^{12},r^{13}+r^{23}]=0
        \end{align}
    \end{itemize}
    plus obvious identities coming from the Leibniz rules.

    The equivalence between the associative Yang-Baxter equation and the double Jacobi identity for double Poisson brackets was found by T.~Schedler in \cite{schedler2009poisson}, see also Section 12.2 in \cite{rubtsov2023lectures}. The part about the classical Yang-Baxter equation essentially follows from Proposition 3.17 from \cite{fairon2023around}. We will check that \eqref{f33} is equivalent to \eqref{f2} in a slightly different context in Section \ref{sec1}. However, the computation to be performed remains valid in the current general case as well.
    
\end{remark}

\begin{theorem}\label{th1}
Let $\{-,-\}:\O(A)\otimes\O(A)\longrightarrow\O(A)$ be a di-twisted (not necessarily Poisson) bracket on $\O(A)$. Define $\{\!\!\{-,-\}\!\!\}_{\id},\{\!\!\{-,-\}\!\!\}_{(12)}:A\otimes A\longrightarrow A\otimes A\otimes \s$ as
\begin{align}
    \{a\otimes\mathds{1}\otimes \id_1,b\otimes\mathds{1}\otimes \id_1\}=\{\!\!\{a,b\}\!\!\}_{\id}\otimes \id_2+\{\!\!\{a,b\}\!\!\}_{(12)}\otimes (12)
\end{align}
for any $a,b\in A$. Then $\{\!\!\{-,-\}\!\!\}_{\id}$ is a right double bracket on $A$ and $\{\!\!\{-,-\}\!\!\}_{(12)}$ is a double bracket on $A$ in the sense of Definition \ref{def15}. Moreover, the assignment
\begin{align}
    \{-,-\}\mapsto \Big(\{\!\!\{-,-\}\!\!\}_{\id},\ \{\!\!\{-,-\}\!\!\}_{(12)}\Big)
\end{align}
defines a bijection between di-twisted brackets on $\O(A)$ and pairs of double and right double brackets on $A$, which maps di-twisted Poisson brackets to coupled double Poisson brackets.
\end{theorem}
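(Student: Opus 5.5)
The plan is to proceed in three stages: restriction, extension, and translation of the Jacobi identity.

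\emph{Restriction.} Since the bracket has degree zero and is graded, $\{a,b\}$ lies in $\O(A)_2=A^{\otimes 2}\otimes\s\otimes\kk[S(2)]$. Hence the two components $\br_{\id}$ and $\br_{(12)}$ are well defined. Di-twisted skew-symmetry with $|\a|=|\b|=1$ reads $\{b,a\}=-\operatorname{Ad}((12))\{a,b\}$. Conjugation by $(12)$ flips the tensor factors and fixes both $\id_2$ and $(12)$, so we get cyclic skew-symmetry of each component separately.

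For the Leibniz rules I would use the compatibility $\{\pi(\a),\b\}=\pi\{\a,\b\}$, with $\a=b\otimes c\otimes\mathds 1\otimes(12)$ (so that $\pi(\a)=bc$) and $\b=a$. The factor $\{b\otimes c\otimes(12),a\}$ is computed from the second Leibniz rule applied to the product $(b\otimes\id_1)(c\otimes\id_1)$, followed by the right $S(2)$-action by $(12)$. Applying $\pi$ then multiplies the first tensor slot into the slot indicated by the permutation. Tracking the four terms $\id_2$ versus $(12)$, for both $\{b,a\}$ and $\{c,a\}$, gives derivation rules in the first argument. The $\id$-component multiplies inside a single factor, which is the left structure. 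The $(12)$-component multiplies across factors, which is the inner structure. Skew-symmetry then converts these into the right and outer Leibniz rules in the second argument. Injectivity of the assignment follows from the remark that a di-twisted bracket is determined by its values on $A\otimes A$: $\O(A)$ is generated by $\O(A)_1$ as an algebra with $S(n)$-bimodule actions, and $\s$-elements are obtained as $\pi$ of degree-one elements.

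\emph{Extension (surjectivity).} Given a right double bracket and a double bracket, I would define $\{-,-\}$ on generators as follows:
\begin{itemize}
\item on $A\otimes A$ by the formula in the statement;
\item on $A\otimes\s$ and $\s\otimes A$ via the maps $\{a,\overline f\}_{\id}+\{a,\overline f\}_{(12)}$ written in degree $1$;
\item on $\s\otimes\s$ via \eqref{f101} plus \eqref{f102}.
\end{itemize}
These are exactly the values forced by applying $\pi$ to $\{a,f\otimes\id_1\}$. I would then extend to all of $\O(A)$ by the two Leibniz rules and $S(n)\times S(m)$-equivariance. The checks are:
\begin{itemize}
\item Well-definedness uses the derivation properties on $A$ and the descent to $A_\natural$ established before Definition~\ref{def16}.
\item Compatibility with $\pi$ reduces, by Leibniz, to degree $\le 2$ in the first argument. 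There it holds either by the definition of the $\s$-brackets or by the Leibniz rules of Definition~\ref{def15}, via the same computation as in the restriction step run backwards.
\item Skew-symmetry holds on generators and propagates by Leibniz.
\end{itemize}

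\emph{Jacobi.} The di-twisted Jacobiator is a trilinear expression that satisfies Leibniz-type rules in each argument and commutes with $\pi$. So it suffices to check it on $a,b,c\in A$; the values involving $\s$ then follow by applying $\pi$. For $a,b,c\in A$ the Jacobiator lies in $A^{\otimes 3}\otimes\s\otimes\kk[S(3)]$. I would expand $\{a,\{b,c\}\}$ using the extended bracket on $\{b,c\}\in\O(A)_2$:
\begin{itemize}
\item Differentiating the first tensor factor of $\{b,c\}$ gives the ``left'' terms.
\item Differentiating the second tensor factor gives the ``right'' terms, after conjugation by $(12)^{1,1,1}$-type permutations.
\item Differentiating the $\s$ factor gives the ``aux'' terms.
\end{itemize}
Collecting coefficients by the six elements of $S(3)$ gives six scalar-valued identities. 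Cyclic symmetry of the Jacobiator under $(123)$ pairs these into orbits $\{\id\}$-type, transposition type, and $3$-cycle type. These orbits should be exactly conditions 1), 2), 3) of Definition~\ref{def16}: the $\id_3$ coefficient gives 1), a transposition coefficient gives 2), and the $(123)$ or $(132)$ coefficient gives 3), Van den Bergh's Jacobi. The main obstacle is this bookkeeping: computing how each composite permutation arises, such as $(12)\times\id_1$ composed with a conjugated $(12)$, when the second factor of $\{b,c\}$ is bracketed. I would organize it by explicitly tabulating the product of permutations for each of the $2\times 2\times 3$ (component, component, slot) cases.
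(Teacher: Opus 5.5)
Your proposal is correct and follows essentially the same route as the paper. The $\pi(b\otimes c\otimes\mathds{1}\otimes(12))=bc$ computation for the Leibniz rules is Lemma~\ref{lemma1}, your Leibniz extension is the paper's explicit inverse formula, and your coefficient-by-coefficient expansion of the Jacobiator over $S(3)$, together with the reduction to $a,b,c\in A$, is Lemma~\ref{lemma2} plus the argument used for Corollary~\ref{cor1}. One small correction: cyclic symmetry alone does not relate the $(123)$- and $(132)$-coefficients, since each is fixed under conjugation by $3$-cycles, so for the converse direction you need the transposition part of the skew-symmetry of the Jacobiator, as in Remark~\ref{rem1}, to see that condition 3) also forces the $(132)$-coefficient to vanish.
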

We will prove this theorem in the subsequent section.

The next corollary states that coupled double Poisson brackets satisfy the Kontsevich-Rosenberg principle, and immediately follows from Proposition \ref{prop3} and Theorem \ref{th1}.

\begin{corollary}\label{cor_KR}
    Let $A$ be an associative algebra and let $\{\!\!\{-,-\}\!\!\}_{\id}$, $\{\!\!\{-,-\}\!\!\}_{(12)}$ be a pair of coupled double Poisson brackets on $A$. For each $N=1,2,\dots$, the formula
\begin{equation}
\{a_{ij},b_{kl}\}_N:=\big(\{\!\!\{a,b\}\!\!\}'_{\id}\big)_{ij}\big(\{\!\!\{a,b\}\!\!\}''_{\id}\big)_{kl}\tr\Big(\{\!\!\{a,b\}\!\!\}'''_{\id}\Big)+\big(\{\!\!\{a,b\}\!\!\}'_{(12)}\big)_{kj}\big(\{\!\!\{a,b\}\!\!\}''_{(12)}\big)_{il}\tr\Big(\{\!\!\{a,b\}\!\!\}'''_{(12)}\Big),
\end{equation}
gives rise to a $\GL$-invariant Poisson bracket $\{-,-\}_N$ on the commutative algebra $\orep$.
\end{corollary}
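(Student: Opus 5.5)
The statement to prove is Corollary~\ref{cor_KR}, which follows by combining Theorem~\ref{th1} with Proposition~\ref{prop3}.

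Starting from the coupled pair $\bigl(\{\!\!\{-,-\}\!\!\}_{\id},\{\!\!\{-,-\}\!\!\}_{(12)}\bigr)$, I use the bijection of Theorem~\ref{th1} to produce the unique di-twisted bracket $\{-,-\}$ on $\O(A)$ whose restriction to $A\otimes A$ is
\begin{equation*}
\{a,b\}=\{\!\!\{a,b\}\!\!\}_{\id}\otimes\id_2+\{\!\!\{a,b\}\!\!\}_{(12)}\otimes(12).
\end{equation*}
Theorem~\ref{th1} identifies coupled double Poisson brackets exactly with the images of di-twisted Poisson brackets. So $\{-,-\}$ is Poisson: it is the preimage of the coupled pair, which lies in the image of the Poisson brackets, and the assignment is injective. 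Proposition~\ref{prop3} then gives, for each $N$, a $\GL$-invariant Poisson bracket $\{-,-\}_N$ on $\orep$.

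It remains to check that $\{-,-\}_N$ agrees with the displayed formula on generators $a_{ij},b_{kl}$. Since $\orep$ is generated by these and both brackets are biderivations, this suffices. Take $\a=a\otimes\mathds 1\otimes\id_1$ and $\b=b\otimes\mathds 1\otimes\id_1$. By \eqref{f28}, $\a_{(i),(j)}=a_{ij}$ and $\b_{(k),(l)}=b_{kl}$. Proposition~\ref{prop3} gives
\begin{equation*}
\{a_{ij},b_{kl}\}_N=\{\a,\b\}_{(i,k),(j,l)}.
\end{equation*}
I evaluate this with \eqref{f28}, taking $\mathbf i=(i,k)$ and $\mathbf j=(j,l)$.

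For the term $\{\!\!\{a,b\}\!\!\}_{\id}\otimes\id_2$, with $u=\id$, formula \eqref{f28} gives
\begin{equation*}
(\{\!\!\{a,b\}\!\!\}'_{\id})_{ij}\,(\{\!\!\{a,b\}\!\!\}''_{\id})_{kl}\,\tr(\{\!\!\{a,b\}\!\!\}'''_{\id}).
\end{equation*}
Here $\tr$ is extended multiplicatively over the factors of $\s$, and that extension is exactly how \eqref{f28} treats a product $f_1\cdots f_m$.

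For the term $\{\!\!\{a,b\}\!\!\}_{(12)}\otimes(12)$, with $u=(12)=u^{-1}$, the row indices are swapped: $i_{u^{-1}(1)}=k$ and $i_{u^{-1}(2)}=i$. This yields
\begin{equation*}
(\{\!\!\{a,b\}\!\!\}'_{(12)})_{kj}\,(\{\!\!\{a,b\}\!\!\}''_{(12)})_{il}\,\tr(\{\!\!\{a,b\}\!\!\}'''_{(12)}).
\end{equation*}
Summing the two contributions gives precisely the formula in Corollary~\ref{cor_KR}.

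The only points needing care are the convention $\a_{\mathbf{ij}}$ for $u=(12)$ and the linear extension of \eqref{f28} to sums. Neither is a real obstacle: the whole argument is a bookkeeping check once Theorem~\ref{th1} and Proposition~\ref{prop3} are in hand.
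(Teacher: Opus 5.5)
Your proposal is correct and follows the paper's route: the paper simply says the corollary follows from Theorem~\ref{th1} (read as a bijection between coupled pairs and di-twisted Poisson brackets, as stated in the introduction) combined with Proposition~\ref{prop3}. Your explicit evaluation via \eqref{f28}, which gives $(\cdot)_{ij}(\cdot)_{kl}$ for $u=\id_2$ and $(\cdot)_{kj}(\cdot)_{il}$ for $u=(12)$, is the bookkeeping the paper leaves implicit, and it is accurate.
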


The next corollary shows that it is enough to check the complicated coupled double Poisson relations only for generators in $A$. We will prove this corollary in the next section.

\begin{corollary}\label{cor1}
    Let $\mathcal{G}\subset A$ be a system of generators. Let $\{\!\!\{-,-\}\!\!\}_{\id}$ and $\{\!\!\{-,-\}\!\!\}_{(12)}$ be a right double bracket and a double bracket in the sense of Definition \ref{def15}. If they satisfy relations 1), 2), 3) from Definition \ref{def16} for any $a,b,c\in\mathcal{G}$, then they satisfy these relations for all $a,b,c\in A$.
\end{corollary}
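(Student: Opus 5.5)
The plan is to deduce Corollary~\ref{cor1} from Theorem~\ref{th1}. This lets us avoid manipulating the long expressions 1), 2), 3) directly.

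Let the right double bracket $\{\!\!\{-,-\}\!\!\}_{\id}$ and the double bracket $\{\!\!\{-,-\}\!\!\}_{(12)}$ be given. By the bijection of Theorem~\ref{th1}, there is a unique di-twisted bracket $\{-,-\}$ on $\O(A)$ whose restriction to $A\otimes A$ is $\{\!\!\{-,-\}\!\!\}_{\id}\otimes\id_2+\{\!\!\{-,-\}\!\!\}_{(12)}\otimes(12)$. Consider its Jacobiator
\begin{equation*}
J(\a,\b,\cc)=\bigl\{ \a,\{\b,\cc\}\bigr\}+\operatorname{Ad}((123)^{|\b|,|\cc|,|\a|}) \bigl\{ \b,\{ \cc,\a\}\bigr\} +\operatorname{Ad}((132)^{|\cc|,|\a|,|\b|}) \bigl\{ \cc,\{ \a,\b\}\bigr\}.
\end{equation*}
For $a,b,c\in A\subset\O(A)_1$, the element $J(a,b,c)$ lies in $\O(A)_3=A^{\otimes3}\otimes\s\otimes\kk[S(3)]$. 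The key observation, which is part of the content of the proof of Theorem~\ref{th1}, is that the components of $J(a,b,c)$ along the basis of $\kk[S(3)]$ are, up to applying fixed permutations, exactly the left-hand sides of 1), 2), 3). The $\id_3$-component gives 1). The components along the transpositions give 2) and its images under the $S(3)$-symmetry. The components along the 3-cycles give 3). Hence, for fixed $a,b,c$, the relations 1)--3) hold if and only if $J(a,b,c)=0$.

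So it suffices to show that if $J$ vanishes on $\mathcal G^{\times3}$, then it vanishes on $A^{\times3}$. We use the Leibniz rules and bimodule equivariance of $\{-,-\}$ to show that $J$ is a ``tri-derivation'' in the twisted sense. Concretely, for $\b,\cc$ of degree one and $\a=\a_1\a_2$ a product in $\O(A)$, we expand each of the three terms of $J$ with the two Leibniz rules of Definition~\ref{def2}. The terms involving brackets of brackets regroup into
\begin{equation*}
\operatorname{Ad}(\sigma)\bigl(J(\a_1,\b,\cc)\,\a_2\bigr)+\operatorname{Ad}(\sigma')\bigl(\a_1\,J(\a_2,\b,\cc)\bigr)
\end{equation*}
for suitable block permutations $\sigma,\sigma'$. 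The mixed terms, of the form $\{\a_1,\b\}\{\a_2,\cc\}$, cancel pairwise by skew-symmetry; this is the standard argument that the Jacobiator of a Poisson-type bracket is a derivation in each argument. By skew-symmetry, $J$ is also equivariant under cyclic permutation of its arguments, up to $\operatorname{Ad}$, so the same holds in each argument.

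Next, we pass from products in $\O(A)$ to products in $A$. Multiplication in $A$ is encoded by $\pi$: for $a,a'\in A$ we have
\begin{equation*}
aa'\otimes\mathds{1}\otimes\id_1=\pi\bigl((a\otimes a')\otimes\mathds{1}\otimes(12)\bigr),\qquad (a\otimes a')\otimes\mathds 1\otimes(12)=\bigl((a\otimes\id_1)(a'\otimes\id_1)\bigr)\cdot(12).
\end{equation*}
Since $\{\pi(\a),\b\}=\pi\{\a,\b\}$, we get that $J(\pi(\a),\b,\cc)$ equals $\pi J(\a,\b,\cc)$ up to $\operatorname{Ad}$ by a block permutation. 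This has to be checked by applying the $\pi$-compatibility in each of the three terms, using skew-symmetry to move $\pi(\a)$ into the first slot. Combining this with right $S$-equivariance and the derivation property, we conclude the following: if $J(a,b,c)=0$ and $J(a',b,c)=0$, then $J(aa',b,c)=0$. Linearity in each slot, together with $J(1,b,c)=0$ (since the brackets with $1$ vanish by the Leibniz rule), then gives induction on word length in the generators in each slot.

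We expect the main obstacle to be the bookkeeping in the derivation identity. One must check that the block permutations produced by the Leibniz rule in each of the three cyclic terms assemble consistently, and that the terms containing $\s$-factors behave correctly. The latter matters when $\pi$ produces $\overline{a}\cdot f$ in case $u(1)=1$. However, for $\pi$ applied to $(a\otimes a')\otimes(12)$ we are in the case $u(1)>1$, so only $m_{1,2}$ appears, which should keep this manageable.
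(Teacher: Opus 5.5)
Your proposal is correct and follows the paper's own proof. Both pass to the di-twisted bracket of Theorem \ref{th1} and use that its Jacobiator is a derivation, is (skew-)symmetric, and commutes with $\pi$. Via $aa'=\pi\bigl((a\otimes a')\otimes\mathds{1}\otimes(12)\bigr)$, vanishing on generators then propagates to all of $A$, and one translates back via Lemma \ref{lemma2} and Remark \ref{rem1}.

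One small precision. For a single fixed triple, $J(a,b,c)=0$ is equivalent to 1)--3) holding for all permutations of $(a,b,c)$, because the $(13)$, $(23)$, $(132)$ components are 2) and 3) at permuted arguments. This is harmless, since $\mathcal{G}^3$ and $A^3$ are closed under permutations.
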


\section{Proof of Theorem \ref{th1} and Corollary \ref{cor1}}\label{sec_proof}

\begin{lemma}\label{lemma1}
    The maps $\{\!\!\{-,-\}\!\!\}_{\id},\{\!\!\{-,-\}\!\!\}_{(12)}:A\otimes A\longrightarrow A\otimes A\otimes \s$ defined in Theorem \ref{th1} are a right double bracket and a double bracket respectively.
\end{lemma}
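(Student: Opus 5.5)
Throughout, write $\hat a := a\otimes\mathds{1}\otimes\id_1\in\O(A)_1$ and decompose
\[
\{\hat a,\hat b\}=\{\!\!\{a,b\}\!\!\}_{\id}\otimes\id_2+\{\!\!\{a,b\}\!\!\}_{(12)}\otimes(12).
\]
The plan has two parts: skew-symmetry, then the Leibniz rule. For the Leibniz rule, the idea is to express $\widehat{bc}$ through $\pi$ applied to an element of $\O(A)_2$. Then the axiom $\{\pi(\a),\b\}=\pi\{\a,\b\}$, together with the Leibniz rule in $\O(A)$, forces the required derivation properties.

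\textbf{Skew-symmetry.} For $|\a|=|\b|=1$ the axiom reads $\{\hat b,\hat a\}=-\operatorname{Ad}((12))\{\hat a,\hat b\}$. Conjugation $\operatorname{Ad}((12))$ flips the two tensor factors of $A\otimes A$. On the permutation part it fixes both $\id_2$ and $(12)$, since $(12)(12)(12)^{-1}=(12)$. So the identity splits componentwise into $\{\!\!\{b,a\}\!\!\}_x=-(12)\{\!\!\{a,b\}\!\!\}_x$ for $x=\id,(12)$. This is the cyclic skew-symmetry required in Definition~\ref{def15}.

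\textbf{Leibniz rule.} Use $\pi(b\otimes c\otimes\mathds{1}\otimes(12))=bc\otimes\mathds{1}\otimes\id_1$, together with $b\otimes c\otimes\mathds{1}\otimes(12)=(\hat b\hat c)\cdot(12)$. Since the bracket is a map of bimodules, for the left-hand side I get
\[
\{\hat a,\widehat{bc}\}=\{\pi((\hat b\hat c)(12)),\hat a\}\ \text{(after skew-symmetry)},
\]
or, more directly, I apply the axiom with $\pi$ in the first slot and then use skew-symmetry to move $\widehat{bc}$ into the second slot. Expanding $\{\hat b\hat c,\hat a\}$ by the second Leibniz rule of Definition~\ref{def2} gives
\[
\operatorname{Ad}((23))\bigl(\{\hat b,\hat a\}\hat c\bigr)+\hat b\{\hat c,\hat a\}\in\O(A)_3 .
\]
Each term is a sum over the $\id_2$ and $(12)$ components, multiplied by $\id_1$ and with suitable permutations. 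I then right-multiply by $(12)\times\id_1$ and apply $\pi$. By the definition of $\pi$, the result depends on $u(1)$ for the resulting $u\in S(3)$:
\begin{itemize}
\item if $u(1)=1$, the first tensor factor goes into $\s$ as a trace;
\item otherwise the first factor is multiplied into the $u(1)$-th factor, and $u_*$ is read off from the path diagram.
\end{itemize}
Collecting the terms with $u_*=\id_2$ and with $u_*=(12)$ separately, and translating back through skew-symmetry, should give
\[
\{\!\!\{a,bc\}\!\!\}_{(12)}=\{\!\!\{a,b\}\!\!\}_{(12)}c+b\{\!\!\{a,c\}\!\!\}_{(12)}\quad\text{(outer)},
\]
and, for $\{\!\!\{-,-\}\!\!\}_{\id}$, the analogous identity with the right bimodule structure. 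The expected mechanism: the factor $b$ lands in the first slot exactly when the $(12)$-component is involved, and in the second slot when the $\id$-component is.

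I expect the main obstacle to be this bookkeeping of permutations under $\pi$. There are eight terms: two summands, two components each, and both orderings through skew-symmetry. For each I must compute $u\in S(3)$, decide whether $u(1)=1$, determine $u_*$, and confirm that no stray terms survive. In particular, I need to check that no term places $b$ or $c$ into $\s$, which would happen precisely when $u(1)=1$. To keep this manageable I would avoid skew-symmetry where possible. Instead I would apply the first Leibniz rule to $\{\hat a,\hat b\hat c\}$ and use that the bracket commutes with the right $S(2)$ action in the second argument:
\[
\{\hat a,(\hat b\hat c)(12)\}=\{\hat a,\hat b\hat c\}\cdot(\id_1\times(12)).
\]
The $\pi$-compatibility in the second argument is not an axiom, but it follows from the first-argument version combined with skew-symmetry. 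With it, the computation becomes
\[
\{\hat a,\widehat{bc}\}=\pi'\bigl(\{\hat a,\hat b\hat c\}(\id_1\times(12))\bigr),
\]
where $\pi'$ is $\pi$ conjugated so that it acts on the second block. I would state this second-argument version as a preliminary observation before the term-by-term check.
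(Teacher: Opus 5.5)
Your proposal is correct and follows the paper's own proof. Skew-symmetry is read off componentwise from $\operatorname{Ad}((12))$. For the Leibniz rules, the paper writes $\widehat{bc}=\pi\bigl((\hat b\hat c)\cdot(12)\bigr)$ and uses $\{\pi(\a),\b\}=\pi\{\a,\b\}$ together with the second di-twisted Leibniz rule to obtain the first-argument rules, then transfers them to the second argument by skew-symmetry. That is exactly your first variant, and your second-slot $\pi'$ variant just reorders the same steps.

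The bookkeeping you defer is lighter than you expect. Only four terms occur, with permutations $(12)$, $(123)$, $(12)$, $(132)$, so every resulting $u\in S(3)$ has $u(1)\neq 1$ and nothing is traced into $\s$. Sorting these terms by $u_*=\id_2$ versus $u_*=(12)$ gives exactly the two Leibniz rules.
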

\begin{proof}
   From the skew-symmetry for the di-twisted bracket $\{-,-\}$ we obtain
\begin{align}
    \{\!\!\{a,b\}\!\!\}_{\id}\otimes \id+\{\!\!\{a,b\}\!\!\}_{(12)}\otimes (12)=-(12)\Big[\{\!\!\{b,a\}\!\!\}_{\id}\otimes \id+\{\!\!\{b,a\}\!\!\}_{(12)}\otimes (12)\Big](12)
\end{align}
Hence,
\begin{align}
    \{\!\!\{a,b\}\!\!\}_{\id}&=-(12)\{\!\!\{b,a\}\!\!\}_{\id},\label{f93}\\
    \{\!\!\{a,b\}\!\!\}_{(12)}&=-(12)\{\!\!\{b,a\}\!\!\}_{(12)}\label{f94}
\end{align}

Since $\{-,\b\}$ and $\pi$ commute, one has
\begin{align}
    \{\pi(a\otimes b\otimes \mathds{1}\otimes (12)),c\otimes \mathds{1}\otimes \id_1\}&=\pi\{a\otimes b\otimes \mathds{1}\otimes (12),c\otimes \mathds{1}\otimes \id_1\}\\
    &=\pi\Big[\{a\otimes b\otimes \mathds{1}\otimes \id_2,c\otimes \mathds{1}\otimes \id_1\}(12)\Big].
\end{align}
By the definition, the left-hand side equals
\begin{align}
    \{ab,c\}=\{\!\!\{ab,c\}\!\!\}_{\id}\otimes \id+\{\!\!\{ab,c\}\!\!\}_{(12)}\otimes (12).
\end{align}
The right-hand side can be computed with the help of the Leibniz rule for the di-twisted Poisson bracket $\{-,-\}$, which gives
\begin{align}
    \hspace{15pt}&\hspace{-15pt}\pi\Big[(23)\{a,c\}(b\otimes \mathds{1}\otimes \id_1)(23)(12)+(a\otimes \mathds{1}\otimes \id_1)\{b,c\}(12)\Big]\\
    &=\pi\Big[\{\!\!\{a,c\}\!\!\}_{\id}'\otimes b\otimes \{\!\!\{a,c\}\!\!\}''_{\id}\otimes \{\!\!\{a,c\}\!\!\}_{\id}'''\otimes (12)\\
    &\hspace{20pt}+\{\!\!\{a,c\}\!\!\}_{(12)}'\otimes b\otimes \{\!\!\{a,c\}\!\!\}''_{(12)}\otimes \{\!\!\{a,c\}\!\!\}_{(12)}'''\otimes \overbrace{(23)(12)(23)(12)}^{(123)}\\
    &\hspace{20pt}+a\otimes \{\!\!\{b,c\}\!\!\}_{\id}'\otimes \{\!\!\{b,c\}\!\!\}''_{\id}\otimes \{\!\!\{b,c\}\!\!\}_{\id}'''\otimes (12)+a\otimes \{\!\!\{b,c\}\!\!\}_{(12)}'\otimes \{\!\!\{b,c\}\!\!\}''_{(12)}\otimes \{\!\!\{b,c\}\!\!\}_{(12)}'''\otimes\overbrace{(23)(12)}^{(132)} \Big]\\
    &=\{\!\!\{a,c\}\!\!\}_{\id}' b\otimes \{\!\!\{a,c\}\!\!\}''_{\id}\otimes \{\!\!\{a,c\}\!\!\}_{\id}'''\otimes \id_2+\{\!\!\{a,c\}\!\!\}_{(12)}' b\otimes \{\!\!\{a,c\}\!\!\}''_{(12)}\otimes \{\!\!\{a,c\}\!\!\}_{(12)}'''\otimes (12)\\
    &\pushright{+a \{\!\!\{b,c\}\!\!\}_{\id}'\otimes \{\!\!\{b,c\}\!\!\}''_{\id}\otimes \{\!\!\{b,c\}\!\!\}_{\id}'''\otimes \id_2+ \{\!\!\{b,c\}\!\!\}_{(12)}'\otimes a\{\!\!\{b,c\}\!\!\}''_{(12)}\otimes \{\!\!\{b,c\}\!\!\}_{(12)}'''\otimes (12)}.
\end{align}
Hence, we obtain 
\begin{align}
    \{\!\!\{ab,c\}\!\!\}_{\id}&=\{\!\!\{a,c\}\!\!\}_{\id}' b\otimes \{\!\!\{a,c\}\!\!\}''_{\id}\otimes \{\!\!\{a,c\}\!\!\}_{\id}'''+a \{\!\!\{b,c\}\!\!\}_{\id}'\otimes \{\!\!\{b,c\}\!\!\}''_{\id}\otimes \{\!\!\{b,c\}\!\!\}_{\id}''',\\
    \{\!\!\{ab,c\}\!\!\}_{(12)}&=\{\!\!\{a,c\}\!\!\}_{(12)}' b\otimes \{\!\!\{a,c\}\!\!\}''_{(12)}\otimes \{\!\!\{a,c\}\!\!\}_{(12)}'''+\{\!\!\{b,c\}\!\!\}_{(12)}'\otimes a\{\!\!\{b,c\}\!\!\}''_{(12)}\otimes \{\!\!\{b,c\}\!\!\}_{(12)}'''.
\end{align}
which can be rewritten as follows using the inner and outer bimodule structures on $A^{\otimes 2}$
\begin{align}
    \{\!\!\{ab,c\}\!\!\}_{\id}&=a\{\!\!\{b,c\}\!\!\}_{\id}+\{\!\!\{a,c\}\!\!\}_{\id}*b,\\
    \{\!\!\{ab,c\}\!\!\}_{(12)}&=a*\{\!\!\{b,c\}\!\!\}_{(12)}+\{\!\!\{a,c\}\!\!\}_{(12)}*b.
\end{align}
Note that by \eqref{f93},\eqref{f94} we also obtain
\begin{align}
    \{\!\!\{a,bc\}\!\!\}_{\id}&=\{\!\!\{a,b\}\!\!\}_{\id}c+b*\{\!\!\{a,c\}\!\!\}_{\id},\\
    \{\!\!\{a,bc\}\!\!\}_{(12)}&=\{\!\!\{a,b\}\!\!\}_{(12)}c+b\{\!\!\{a,c\}\!\!\}_{(12)}.
\end{align}

\end{proof}

We will need the following notation
\begin{align}
    &\Jac^{\id_3}(a,b,c):=\Jac^{\id_3}_{\id}(a,b,c)+(123)\Jac^{\id_3}_{\id,(12)}(a,b,c)\\[10pt]
    &\mathbbnew{Jac}^{\id_3}_{\id}(a,b,c):=\Big\{\!\!\!\Big\{a,\{\!\!\{b,c\}\!\!\}_{\id}\Big\}\!\!\!\Big\}_{\id}^{\text{left}}+(123)\Big\{\!\!\!\Big\{b,\{\!\!\{c,a\}\!\!\}_{\id}\Big\}\!\!\!\Big\}_{\id}^{\text{left}}+(132)\Big\{\!\!\!\Big\{c,\{\!\!\{a,b\}\!\!\}_{\id}\Big\}\!\!\!\Big\}_{\id}^{\text{left}}\\[7pt]
&\hspace{90pt}+(12)\Bigg[\Big\{\!\!\!\Big\{a,\{\!\!\{b,c\}\!\!\}_{\id}\Big\}\!\!\!\Big\}_{\id}^{\text{right}}+(132)\Big\{\!\!\!\Big\{b,\{\!\!\{c,a\}\!\!\}_{\id}\Big\}\!\!\!\Big\}_{\id}^{\text{right}}+(123)\Big\{\!\!\!\Big\{c,\{\!\!\{a,b\}\!\!\}_{\id}\Big\}\!\!\!\Big\}_{\id}^{\text{right}}\Bigg]\\[7pt]
&\hspace{90pt}+(123)\Bigg[\Big\{\!\!\!\Big\{a,\{\!\!\{b,c\}\!\!\}_{\id}\Big\}\!\!\!\Big\}_{\id}^{\text{aux}}+(123)\Big\{\!\!\!\Big\{b,\{\!\!\{c,a\}\!\!\}_{\id}\Big\}\!\!\!\Big\}_{\id}^{\text{aux}}+(132)\Big\{\!\!\!\Big\{c,\{\!\!\{a,b\}\!\!\}_{\id}\Big\}\!\!\!\Big\}_{\id}^{\text{aux}}\Bigg],\\[25pt]
    &\mathbbnew{Jac}^{\id_3}_{\id,(12)}(a,b,c):=\Big\{\!\!\!\Big\{a,\{\!\!\{b,c\}\!\!\}_{\id}\Big\}\!\!\!\Big\}_{(12)}^{\text{aux}}+(123)\Big\{\!\!\!\Big\{b,\{\!\!\{c,a\}\!\!\}_{\id}\Big\}\!\!\!\Big\}_{(12)}^{\text{aux}}+(132)\Big\{\!\!\!\Big\{c,\{\!\!\{a,b\}\!\!\}_{\id}\Big\}\!\!\!\Big\}_{(12)}^{\text{aux}}\\[25pt]
    &\Jac^{12}(a,b,c):=\Big\{\!\!\!\Big\{a,\{\!\!\{b,c\}\!\!\}_{\id}\Big\}\!\!\!\Big\}_{(12)}^{\text{left}}+(13)\Big\{\!\!\!\Big\{b,\{\!\!\{c,a\}\!\!\}_{\id}\Big\}\!\!\!\Big\}_{(12)}^{\text{right}}+(132)\Big\{\!\!\!\Big\{c,\{\!\!\{a,b\}\!\!\}_{(12)}\Big\}\!\!\!\Big\}_{\id}^{\text{left}}\\[7pt]
&\hspace{90pt}(23)\Big\{\!\!\!\Big\{c,\{\!\!\{a,b\}\!\!\}_{(12)}\Big\}\!\!\!\Big\}_{\id}^{\text{right}}+\Big\{\!\!\!\Big\{c,\{\!\!\{a,b\}\!\!\}_{(12)}\Big\}\!\!\!\Big\}_{\id}^{\text{aux}}+\Big\{\!\!\!\Big\{c,\{\!\!\{a,b\}\!\!\}_{(12)}\Big\}\!\!\!\Big\}_{(12)}^{\text{aux}}\\[25pt]
    &\Jac^{13}(a,b,c):=(12)\Big\{\!\!\!\Big\{a,\{\!\!\{b,c\}\!\!\}_{\id}\Big\}\!\!\!\Big\}_{(12)}^{\text{right}}+(123)\Big\{\!\!\!\Big\{b,\{\!\!\{c,a\}\!\!\}_{(12)}\Big\}\!\!\!\Big\}_{\id}^{\text{left}}+(13)\Big\{\!\!\!\Big\{b,\{\!\!\{c,a\}\!\!\}_{(12)}\Big\}\!\!\!\Big\}_{\id}^{\text{right}}\\[7pt]
&\hspace{90pt}+(132)\Big\{\!\!\!\Big\{b,\{\!\!\{c,a\}\!\!\}_{(12)}\Big\}\!\!\!\Big\}_{\id}^{\text{aux}}+(132)\Big\{\!\!\!\Big\{b,\{\!\!\{c,a\}\!\!\}_{(12)}\Big\}\!\!\!\Big\}_{(12)}^{\text{aux}}+(132)\Big\{\!\!\!\Big\{c,\{\!\!\{a,b\}\!\!\}_{\id}\Big\}\!\!\!\Big\}_{(12)}^{\text{left}}\\[25pt]
    &\Jac^{23}(a,b,c):=\Big\{\!\!\!\Big\{a,\{\!\!\{b,c\}\!\!\}_{(12)}\Big\}\!\!\!\Big\}_{\id}^{\text{left}}+(12)\Big\{\!\!\!\Big\{a,\{\!\!\{b,c\}\!\!\}_{(12)}\Big\}\!\!\!\Big\}_{\id}^{\text{right}}+(123)\Big\{\!\!\!\Big\{a,\{\!\!\{b,c\}\!\!\}_{(12)}\Big\}\!\!\!\Big\}_{\id}^{\text{aux}}\\
&\hspace{90pt}+(123)\Big\{\!\!\!\Big\{a,\{\!\!\{b,c\}\!\!\}_{(12)}\Big\}\!\!\!\Big\}_{(12)}^{\text{aux}}+(123)\Big\{\!\!\!\Big\{b,\{\!\!\{c,a\}\!\!\}_{\id}\Big\}\!\!\!\Big\}_{(12)}^{\text{left}}+(23)\Big\{\!\!\!\Big\{c,\{\!\!\{a,b\}\!\!\}_{\id}\Big\}\!\!\!\Big\}_{(12)}^{\text{right}}
\\[25pt]
    &\mathbbnew{Jac}^{123}(a,b,c):=\Big\{\!\!\!\Big\{a,\{\!\!\{b,c\}\!\!\}_{(12)}\Big\}\!\!\!\Big\}_{(12)}^{\text{left}}+(123)\Big\{\!\!\!\Big\{b,\{\!\!\{c,a\}\!\!\}_{(12)}\Big\}\!\!\!\Big\}_{(12)}^{\text{left}}+(132)\Big\{\!\!\!\Big\{c,\{\!\!\{a,b\}\!\!\}_{(12)}\Big\}\!\!\!\Big\}_{(12)}^{\text{left}}\\[25pt]
&\Jac^{132}(a,b,c):=(12)\Big\{\!\!\!\Big\{a,\{\!\!\{b,c\}\!\!\}_{(12)}\Big\}\!\!\!\Big\}_{(12)}^{\text{right}}+(13)\Big\{\!\!\!\Big\{b,\{\!\!\{c,a\}\!\!\}_{(12)}\Big\}\!\!\!\Big\}_{(12)}^{\text{right}}+(23)\Big\{\!\!\!\Big\{c,\{\!\!\{a,b\}\!\!\}_{(12)}\Big\}\!\!\!\Big\}_{(12)}^{\text{right}}.
\end{align}

\begin{lemma}\label{lemma2}
    Using notation from Theorem \ref{th1}, one has 
    \begin{align}\label{f99}
    &\bigl\{ a,\{b,c\}\bigr\}+\operatorname{Ad}(123) \bigl\{ b,\{ c,a\}\bigr\}+\operatorname{Ad}(132) \bigl\{ c,\{ a,b\}\bigr\}\\[7pt]
    &\hspace{40pt}=\Jac^{\id_3}(a,b,c)\otimes \id_3+\Jac^{12}(a,b,c)\otimes (12)+\Jac^{13}(a,b,c)\otimes (13)+\Jac^{23}(a,b,c)\otimes (23)\\
    &\pushright{+\Jac^{123}(a,b,c)\otimes (123)+\Jac^{132}(a,b,c)\otimes (132)},
\end{align}
where on the left-hand side we write $a\in A$ for $a\otimes\mathds{1}\otimes\id_1\in\O(A)_1$ for brevity, and similarly for $b,c$. 
\end{lemma}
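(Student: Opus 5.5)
The plan is a direct computation of the three summands on the left-hand side of \eqref{f99}. Each summand is expanded in $\O(A)_3=A^{\otimes 3}\otimes\s\otimes\kk[S(3)]$, and the coefficients of the six permutations are collected. The whole computation rests on one building block: the value of $\{a,\b\}$ for $a\in A=\O(A)_1$ and $\b=x'\otimes x''\otimes f\otimes\sigma\in\O(A)_2$ with $\sigma\in\{\id_2,(12)\}$.

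First, by bimodule equivariance, $\{a,x\otimes f\otimes\sigma\}=\{a,x\otimes f\otimes\id_2\}\cdot(\id_1\times\sigma)$. It therefore suffices to treat $\sigma=\id_2$. Then $x'\otimes x''\otimes f\otimes\id_2$ is the product $(x'\otimes\mathds{1}\otimes\id_1)(x''\otimes\mathds{1}\otimes\id_1)(\mathds{1}\otimes f\otimes\id_0)$. Applying the first Leibniz rule of Definition \ref{def2} twice splits $\{a,\b\}$ into three pieces:
\begin{itemize}
\item $\{a,x'\}\cdot x''\cdot f$ with no reordering; this produces the ``left'' terms;
\item a term in which $\{a,x''\}$ is conjugated by $(12)^{1,1,1}$ past $x'$; this produces the ``right'' terms;
\item a term $\{a,f\}$, conjugated past $x'\otimes x''$; this produces the ``aux'' terms.
\end{itemize}
For the third piece, $\{a,f\}$ with $f=\overline{g}$ a single trace is computed via the $\pi$-compatibility. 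Write $\overline g\cdot\mathds{1}\otimes\id_0=\pi(g\otimes\mathds{1}\otimes\id_1)$, so that
\[
\{a,\overline g\}=\{\pi(g),a\}\text{ up to skew-symmetry}=\pi\{g,a\}.
\]
Plugging in the definition of $\{g,a\}$ in terms of $\br_{\id}$ and $\br_{(12)}$, and using the case analysis in the definition of $\pi$ (the case $u(1)=1$ versus $u(1)>1$), recovers exactly the maps $\{a,\overline g\}_{\id}$ and $\{a,\overline g\}_{(12)}$ of the previous section, each sitting in $A\otimes\s\otimes\id_1$. Products of traces are then handled by the commutative Leibniz rule in $\s$, since all these elements lie in degree $0$ or $1$ and the permutation twists are trivial there. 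This reduction is the same argument used to establish that $\{-,-\}_{\id}$ and $\{-,-\}_{(12)}$ are well defined.

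Next, one applies this building block with $\b=\{b,c\}=\br_{\id}(b,c)\otimes\id_2+\br_{(12)}(b,c)\otimes(12)$. This gives $\{a,\{b,c\}\}$ as a sum of $2\times 2\times 3$ terms. Each term has the form (tensor in $A^{\otimes 3}\otimes\s$)$\otimes$(a permutation in $S(3)$), where the permutation is a product of $\id_1\times\sigma$, the inner permutation $\tau\in\{\id_2,(12)\}$ coming from $\{a,x'\}$ or $\{a,x''\}$, and the Leibniz twists. One then rewrites $\operatorname{Ad}(123)\{b,\{c,a\}\}$ and $\operatorname{Ad}(132)\{c,\{a,b\}\}$ by cyclically relabelling the first computation. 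Here $\operatorname{Ad}(u)$ acts on the $A^{\otimes 3}$ factor by $u$ and conjugates the permutation factor.

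Finally, the terms are sorted according to the resulting permutation in $S(3)$. For each class one reads off the tensor coefficient and compares it with the corresponding $\Jac^{\bullet}$. For instance, the $(123)$-component collects exactly the three ``left'' terms with both brackets $(12)$, which is $\Jac^{123}$. The identity component collects the $\id$-$\id$ terms together with the aux terms, where $\{-,-\}_{(12)}$ acting on a trace yields the $(123)\Jac^{\id_3}_{\id,(12)}$ part. Where the paper's expressions apply a permutation such as $(13)$ to the tensor, this comes from moving the twist from the permutation factor to the tensor factor via the bimodule action.

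The main obstacle is purely combinatorial bookkeeping. For each of the 36 terms one must compute the composite permutation (a product of up to four elements of $S(3)$ under the paper's conventions for $u\cdot\a\cdot v$ and $\operatorname{Ad}$) together with the induced reordering of tensor factors, and get every sign and orientation right. To control this, I would first tabulate $\{a,x'\otimes x''\otimes f\otimes\sigma\}$ once in closed form, giving six types (left, right, aux, each with $\tau=\id_2$ or $(12)$) and their permutations. I would then treat the cyclic rotations by a single relabelling rule rather than recomputing them.
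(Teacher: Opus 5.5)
Your proposal follows the paper's proof essentially step by step: bimodule equivariance splits $\{a,\{b,c\}\}$ into the $\br_{\id}$-part and the $\br_{(12)}$-part times $(23)$; the three-factor Leibniz rule produces the left/right/aux pieces; skew-symmetry plus $\pi$-compatibility give $\{a,f\otimes\id_0\}=\bigl(\{a,f\}_{\id}+\{a,f\}_{(12)}\bigr)\otimes\id_1$, with the precise sign $\{a,\overline g\}=-\{\pi(g),a\}=\pi\bigl[(12)\{a,g\}(12)\bigr]$; and the other two summands come from cyclic relabelling before sorting by permutation. One small correction to your sketch of the sorting: only the aux terms from the inner bracket $\br_{\id}$ land in the $\id_3$-component, whereas those from the inner bracket $\br_{(12)}$ carry the $(23)$ twist and end up in $\Jac^{23}$, $\Jac^{13}$, $\Jac^{12}$ for the three cyclic summands respectively.
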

\begin{proof}

By definition, we have 
\begin{align}\label{f96}
    \bigl\{ a,\{b,c\}\bigr\}&=\Big\{a,\{\!\!\{b,c\}\!\!\}_{\id}\otimes \id+\{\!\!\{b,c\}\!\!\}_{(12)}\otimes (12)\Big\}\\
    &=\Big\{a,\{\!\!\{b,c\}\!\!\}_{\id}\otimes \id_2\Big\}+\Big\{a,\{\!\!\{b,c\}\!\!\}_{(12)}\otimes \id_2\Big\}(23).
\end{align}
By the Leibniz rule, we compute
\begin{align}
    \Big\{a,\{\!\!\{b,c\}\!\!\}_{\id}\otimes \id_2\Big\}&=\Big\{a,\{\!\!\{b,c\}\!\!\}'_{\id}\Big\}\Big(\{\!\!\{b,c\}\!\!\}''_{\id}\otimes \{\!\!\{b,c\}\!\!\}'''_{\id}\otimes \id_1\Big)\\
    &\hspace{30pt}+(12)\Big(\{\!\!\{b,c\}\!\!\}'_{\id}\otimes\mathds{1}\otimes \id_1\Big)\Big\{a,\{\!\!\{b,c\}\!\!\}''_{\id}\Big\}\Big(\{\!\!\{b,c\}\!\!\}'''_{\id}\otimes \id_0\Big)(12) \label{f95}\\
    &\hspace{30pt}+(123)\Big(\{\!\!\{b,c\}\!\!\}'_{\id}\otimes\{\!\!\{b,c\}\!\!\}''_{\id}\otimes \mathds{1}\otimes \id_2\Big)\Big\{a,\{\!\!\{b,c\}\!\!\}'''_{\id}\otimes \id_0\Big\}(132)
\end{align}

From a simple computation
\begin{align}
    \Big\{a,\overline{f_1}\cdot\ldots\cdot\overline{f_r}\otimes \id_0\Big\}&=\sum\limits_{k=1}^r(\overline{f_1}\cdot\ldots\cdot\widehat{\overline{f_k}}\cdot\ldots\cdot\overline{f_r}\otimes \id_0)\Big\{a,\overline{f_k}\otimes \id_0\Big\}\\
    &=-\sum\limits_{k=1}^r(\overline{f_1}\cdot\ldots\cdot\widehat{\overline{f_k}}\cdot\ldots\cdot\overline{f_r}\otimes \id_0)\Big\{\pi(f_k\otimes\mathds{1}\otimes \id_1),a\Big\}\\
    &=\sum\limits_{k=1}^r(\overline{f_1}\cdot\ldots\cdot\widehat{\overline{f_k}}\cdot\ldots\cdot\overline{f_r}\otimes \id_0)\pi\Big[(12)\{a,f_k\}(12)\Big]\\
    &=\sum\limits_{k=1}^r(\overline{f_1}\cdot\ldots\cdot\widehat{\overline{f_k}}\cdot\ldots\cdot\overline{f_r}\otimes \id_0)\pi\Big[\{\!\!\{a,f_k\}\!\!\}^{\circ}_{\id}\otimes \id_2+\{\!\!\{a,f_k\}\!\!\}^{\circ}_{(12)}\otimes (12)\Big]\\
    &=\sum\limits_{k=1}^r(\overline{f_1}\cdot\ldots\cdot\widehat{\overline{f_k}}\cdot\ldots\cdot\overline{f_r}\otimes \id_0)\Big(\{\!\!\{a,f_k\}\!\!\}'_{\id}\otimes \overline{\{\!\!\{a,f_k\}\!\!\}''_{\id}}\cdot \{\!\!\{a,f_k\}\!\!\}'''_{\id}\otimes \id_1\\
    &\pushright{+\{\!\!\{a,f_k\}\!\!\}''_{(12)}\{\!\!\{a,f_k\}\!\!\}_{(12)}'\otimes \{\!\!\{a,f_k\}\!\!\}'''_{(12)}\otimes \id_1}\Big)\\
    &=\Big[\{a,\overline{f_1}\cdot\ldots\cdot\overline{f_r}\}_{\id}+\{a,\overline{f_1}\cdot\ldots\cdot\overline{f_r}\}_{(12)}\Big]\otimes \id_1
    \end{align}
    one deduces that $\{a,f\otimes \id_0\}=\Big(\{a,f\}_{\id}+\{a,f\}_{(12)}\Big)\otimes \id_1$ for any $f\in\s$, particularly,
    \begin{align}
    \Big\{a,\{\!\!\{b,c\}\!\!\}'''_{\id}\otimes \id_0\Big\}=\Big\{a,\{\!\!\{b,c\}\!\!\}'''_{\id}\Big\}_{\id}\otimes \id_1+\Big\{a,\{\!\!\{b,c\}\!\!\}'''_{\id}\Big\}_{(12)}\otimes \id_1,    
    \end{align}
    so we can continue \eqref{f95}
    \begin{align}
    \hspace{17pt}&\hspace{-17pt}\Big\{a,\{\!\!\{b,c\}\!\!\}_{\id}\otimes \id_2\Big\}\\
&=\Big[\{\!\!\{a,\{\!\!\{b,c\}\!\!\}'_{\id}\}\!\!\}_{\id}\otimes \id+\{\!\!\{a,\{\!\!\{b,c\}\!\!\}'_{\id}\}\!\!\}_{(12)}\otimes (12)\Big]\Big(\{\!\!\{b,c\}\!\!\}''_{\id}\otimes \{\!\!\{b,c\}\!\!\}'''_{\id}\otimes \id_1\Big)\\
    &\hspace{15pt}+(12)\Big(\{\!\!\{b,c\}\!\!\}'_{\id}\otimes\mathds{1}\otimes \id_1\Big)\Big[\{\!\!\{a,\{\!\!\{b,c\}\!\!\}''_{\id}\}\!\!\}_{\id}\otimes \id_2+\{\!\!\{a,\{\!\!\{b,c\}\!\!\}''_{\id}\}\!\!\}_{(12)}\otimes (12)\Big]\Big(\{\!\!\{b,c\}\!\!\}'''_{\id}\otimes \id_0\Big)(12)\\
    &\pushright{(123)\Big(\{\!\!\{b,c\}\!\!\}'_{\id}\otimes\{\!\!\{b,c\}\!\!\}''_{\id}\otimes \mathds{1}\otimes \id_2\Big)\Big(\Big\{a,\{\!\!\{b,c\}\!\!\}'''_{\id}\Big\}_{\id}\otimes \id_1+\Big\{a,\{\!\!\{b,c\}\!\!\}'''_{\id}\Big\}_{(12)}\otimes \id_1\Big)(132)}\\
&=\{\!\!\{a,\{\!\!\{b,c\}\!\!\}'_{\id}\}\!\!\}'_{\id}\otimes \{\!\!\{a,\{\!\!\{b,c\}\!\!\}'_{\id}\}\!\!\}''_{\id}\otimes \{\!\!\{b,c\}\!\!\}''_{\id}\otimes \{\!\!\{a,\{\!\!\{b,c\}\!\!\}'_{\id}\}\!\!\}'''_{\id}\cdot\{\!\!\{b,c\}\!\!\}'''_{\id}\otimes \id_3\\[7pt]
    &\hspace{50pt}+\{\!\!\{a,\{\!\!\{b,c\}\!\!\}'_{\id}\}\!\!\}'_{(12)}\otimes \{\!\!\{a,\{\!\!\{b,c\}\!\!\}'_{\id}\}\!\!\}''_{(12)}\otimes \{\!\!\{b,c\}\!\!\}''_{\id}\otimes \{\!\!\{a,\{\!\!\{b,c\}\!\!\}'_{\id}\}\!\!\}'''_{(12)}\cdot\{\!\!\{b,c\}\!\!\}'''_{\id}\otimes (12)\\[7pt]
    &\hspace{50pt}+\{\!\!\{a,\{\!\!\{b,c\}\!\!\}''_{\id}\}\!\!\}'_{\id}\otimes \{\!\!\{b,c\}\!\!\}'_{\id}\otimes\{\!\!\{a,\{\!\!\{b,c\}\!\!\}''_{\id}\}\!\!\}''_{\id}\otimes \{\!\!\{a,\{\!\!\{b,c\}\!\!\}''_{\id}\}\!\!\}'''_{\id}\cdot\{\!\!\{b,c\}\!\!\}'''_{\id}\otimes \id_3\\[7pt]
    &\hspace{50pt}+ \{\!\!\{a,\{\!\!\{b,c\}\!\!\}''_{\id}\}\!\!\}'_{(12)}\otimes \{\!\!\{b,c\}\!\!\}'_{\id}\otimes\{\!\!\{a,\{\!\!\{b,c\}\!\!\}''_{\id}\}\!\!\}''_{(12)}\otimes \{\!\!\{a,\{\!\!\{b,c\}\!\!\}''_{\id}\}\!\!\}'''_{(12)}\cdot \{\!\!\{b,c\}\!\!\}'''_{\id}\otimes (13)\\[7pt]
    &\hspace{50pt}+\Big\{a,\{\!\!\{b,c\}\!\!\}'''_{\id}\Big\}'_{\id}\otimes\{\!\!\{b,c\}\!\!\}'_{\id}\otimes\{\!\!\{b,c\}\!\!\}''_{\id}\otimes \Big\{a,\{\!\!\{b,c\}\!\!\}'''_{\id}\Big\}''_{\id}\otimes \id_3\\[7pt]
    &\hspace{50pt}+\Big\{a,\{\!\!\{b,c\}\!\!\}'''_{\id}\Big\}'_{(12)}\otimes \{\!\!\{b,c\}\!\!\}'_{\id}\otimes\{\!\!\{b,c\}\!\!\}''_{\id}\otimes \Big\{a,\{\!\!\{b,c\}\!\!\}'''_{\id}\Big\}''_{(12)}\otimes \id_3.
\end{align}

Then 
\begin{align}
\Big\{a,\{\!\!\{b,c\}\!\!\}_{\id}\otimes \id_2\Big\}\hspace{30pt}&\hspace{-30pt}=\Big\{\!\!\!\Big\{a,\{\!\!\{b,c\}\!\!\}_{\id}\Big\}\!\!\!\Big\}_{\id}^{\text{left}}\otimes \id_3+\Big\{\!\!\!\Big\{a,\{\!\!\{b,c\}\!\!\}_{\id}\Big\}\!\!\!\Big\}_{(12)}^{\text{left}}\otimes (12)\\[7pt]
&\hspace{10pt}+\Bigg[(12)\Big\{\!\!\!\Big\{a,\{\!\!\{b,c\}\!\!\}_{\id}\Big\}\!\!\!\Big\}_{\id}^{\text{right}}\Bigg]\otimes\id_3+\Bigg[(12)\Big\{\!\!\!\Big\{a,\{\!\!\{b,c\}\!\!\}_{\id}\Big\}\!\!\!\Big\}_{(12)}^{\text{right}}\Bigg]\otimes (13)\\[7pt]
&\hspace{10pt}+\Bigg[(123)\Big\{\!\!\!\Big\{a,\{\!\!\{b,c\}\!\!\}_{\id}\Big\}\!\!\!\Big\}_{\id}^{\text{aux}}\Bigg]\otimes\id_3+\Bigg[(123)\Big\{\!\!\!\Big\{a,\{\!\!\{b,c\}\!\!\}_{\id}\Big\}\!\!\!\Big\}_{(12)}^{\text{aux}}\Bigg]\otimes\id_3.
\end{align}

Similarly,
\begin{align}
    \Big\{a,\{\!\!\{b,c\}\!\!\}_{(12)}\otimes \id_2\Big\}(23)\hspace{20pt}&\hspace{-20pt}=\Big\{\!\!\!\Big\{a,\{\!\!\{b,c\}\!\!\}_{(12)}\Big\}\!\!\!\Big\}_{\id}^{\text{left}}\otimes (23)+\Big\{\!\!\!\Big\{a,\{\!\!\{b,c\}\!\!\}_{(12)}\Big\}\!\!\!\Big\}_{(12)}^{\text{left}}\otimes (123)\\[7pt]
&\hspace{0pt}+\Bigg[(12)\Big\{\!\!\!\Big\{a,\{\!\!\{b,c\}\!\!\}_{(12)}\Big\}\!\!\!\Big\}_{\id}^{\text{right}}\Bigg]\otimes(23)+\Bigg[(12)\Big\{\!\!\!\Big\{a,\{\!\!\{b,c\}\!\!\}_{(12)}\Big\}\!\!\!\Big\}_{(12)}^{\text{right}}\Bigg]\otimes (132)\\[7pt]
&\hspace{0pt}+\Bigg[(123)\Big\{\!\!\!\Big\{a,\{\!\!\{b,c\}\!\!\}_{(12)}\Big\}\!\!\!\Big\}_{\id}^{\text{aux}}\Bigg]\otimes(23)+\Bigg[(123)\Big\{\!\!\!\Big\{a,\{\!\!\{b,c\}\!\!\}_{(12)}\Big\}\!\!\!\Big\}_{(12)}^{\text{aux}}\Bigg]\otimes(23).
\end{align}
Then by \eqref{f96} one has
\begin{align}
\bigl\{ a,\{b,c\}\bigr\}\hspace{17pt}&\hspace{-17pt}=\Bigg[\Big\{\!\!\!\Big\{a,\{\!\!\{b,c\}\!\!\}_{\id}\Big\}\!\!\!\Big\}_{\id}^{\text{left}}+(12)\Big\{\!\!\!\Big\{a,\{\!\!\{b,c\}\!\!\}_{\id}\Big\}\!\!\!\Big\}_{\id}^{\text{right}}\\[7pt]
&\hspace{130pt}+(123)\Big\{\!\!\!\Big\{a,\{\!\!\{b,c\}\!\!\}_{\id}\Big\}\!\!\!\Big\}_{\id}^{\text{aux}}+(123)\Big\{\!\!\!\Big\{a,\{\!\!\{b,c\}\!\!\}_{\id}\Big\}\!\!\!\Big\}_{(12)}^{\text{aux}}\Bigg]\otimes \id_3\\[7pt]
&+\Big\{\!\!\!\Big\{a,\{\!\!\{b,c\}\!\!\}_{\id}\Big\}\!\!\!\Big\}_{(12)}^{\text{left}}\otimes (12)+\Bigg[(12)\Big\{\!\!\!\Big\{a,\{\!\!\{b,c\}\!\!\}_{\id}\Big\}\!\!\!\Big\}_{(12)}^{\text{right}}\Bigg]\otimes (13)\\[7pt]
&+\Bigg[\Big\{\!\!\!\Big\{a,\{\!\!\{b,c\}\!\!\}_{(12)}\Big\}\!\!\!\Big\}_{\id}^{\text{left}}+(12)\Big\{\!\!\!\Big\{a,\{\!\!\{b,c\}\!\!\}_{(12)}\Big\}\!\!\!\Big\}_{\id}^{\text{right}}\\
&\hspace{130pt}+(123)\Big\{\!\!\!\Big\{a,\{\!\!\{b,c\}\!\!\}_{(12)}\Big\}\!\!\!\Big\}_{\id}^{\text{aux}}+(123)\Big\{\!\!\!\Big\{a,\{\!\!\{b,c\}\!\!\}_{(12)}\Big\}\!\!\!\Big\}_{(12)}^{\text{aux}}\Bigg]\otimes (23)\\[7pt]
&+\Big\{\!\!\!\Big\{a,\{\!\!\{b,c\}\!\!\}_{(12)}\Big\}\!\!\!\Big\}_{(12)}^{\text{left}}\otimes (123)+\Bigg[(12)\Big\{\!\!\!\Big\{a,\{\!\!\{b,c\}\!\!\}_{(12)}\Big\}\!\!\!\Big\}_{(12)}^{\text{right}}\Bigg]\otimes (132)
\end{align}
and similarly
\begin{align}
\hspace{30pt}&\hspace{-30pt}\operatorname{Ad}(123)\bigl\{ b,\{c,a\}\bigr\}\\[7pt]
&=\Bigg[(123)\Big\{\!\!\!\Big\{b,\{\!\!\{c,a\}\!\!\}_{\id}\Big\}\!\!\!\Big\}_{\id}^{\text{left}}+(13)\Big\{\!\!\!\Big\{b,\{\!\!\{c,a\}\!\!\}_{\id}\Big\}\!\!\!\Big\}_{\id}^{\text{right}}\\[7pt]
&\hspace{90pt}+(132)\Big\{\!\!\!\Big\{b,\{\!\!\{c,a\}\!\!\}_{\id}\Big\}\!\!\!\Big\}_{\id}^{\text{aux}}+(132)\Big\{\!\!\!\Big\{b,\{\!\!\{c,a\}\!\!\}_{\id}\Big\}\!\!\!\Big\}_{(12)}^{\text{aux}}\Bigg]\otimes \id_3\\[7pt]
&+\Bigg[(123)\Big\{\!\!\!\Big\{b,\{\!\!\{c,a\}\!\!\}_{\id}\Big\}\!\!\!\Big\}_{(12)}^{\text{left}}\Bigg]\otimes \underbrace{(123)(12)(132)}_{(23)}+\Bigg[(13)\Big\{\!\!\!\Big\{b,\{\!\!\{c,a\}\!\!\}_{\id}\Big\}\!\!\!\Big\}_{(12)}^{\text{right}}\Bigg]\otimes \underbrace{(123)(13)(132)}_{(12)}\\[7pt]
&+\Bigg[(123)\Big\{\!\!\!\Big\{b,\{\!\!\{c,a\}\!\!\}_{(12)}\Big\}\!\!\!\Big\}_{\id}^{\text{left}}+(13)\Big\{\!\!\!\Big\{b,\{\!\!\{c,a\}\!\!\}_{(12)}\Big\}\!\!\!\Big\}_{\id}^{\text{right}}\\
&\hspace{90pt}+(132)\Big\{\!\!\!\Big\{b,\{\!\!\{c,a\}\!\!\}_{(12)}\Big\}\!\!\!\Big\}_{\id}^{\text{aux}}+(132)\Big\{\!\!\!\Big\{b,\{\!\!\{c,a\}\!\!\}_{(12)}\Big\}\!\!\!\Big\}_{(12)}^{\text{aux}}\Bigg]\otimes \underbrace{(123)(23)(132)}_{(13)}\\[7pt]
&+\Bigg[(123)\Big\{\!\!\!\Big\{b,\{\!\!\{c,a\}\!\!\}_{(12)}\Big\}\!\!\!\Big\}_{(12)}^{\text{left}}\Bigg]\otimes (123)+\Bigg[(13)\Big\{\!\!\!\Big\{b,\{\!\!\{c,a\}\!\!\}_{(12)}\Big\}\!\!\!\Big\}_{(12)}^{\text{right}}\Bigg]\otimes (132)
\end{align}
and
\begin{align}
\hspace{30pt}&\hspace{-30pt}\operatorname{Ad}(132)\bigl\{ c,\{a,b\}\bigr\}\\[7pt]
&=\Bigg[(132)\Big\{\!\!\!\Big\{c,\{\!\!\{a,b\}\!\!\}_{\id}\Big\}\!\!\!\Big\}_{\id}^{\text{left}}+(23)\Big\{\!\!\!\Big\{c,\{\!\!\{a,b\}\!\!\}_{\id}\Big\}\!\!\!\Big\}_{\id}^{\text{right}}\\[7pt]
&\hspace{90pt}+\Big\{\!\!\!\Big\{c,\{\!\!\{a,b\}\!\!\}_{\id}\Big\}\!\!\!\Big\}_{\id}^{\text{aux}}+\Big\{\!\!\!\Big\{c,\{\!\!\{a,b\}\!\!\}_{\id}\Big\}\!\!\!\Big\}_{(12)}^{\text{aux}}\Bigg]\otimes \id_3\\[7pt]
&+\Bigg[(132)\Big\{\!\!\!\Big\{c,\{\!\!\{a,b\}\!\!\}_{\id}\Big\}\!\!\!\Big\}_{(12)}^{\text{left}}\Bigg]\otimes \underbrace{(123)(23)(132)}_{(13)}+\Bigg[(23)\Big\{\!\!\!\Big\{c,\{\!\!\{a,b\}\!\!\}_{\id}\Big\}\!\!\!\Big\}_{(12)}^{\text{right}}\Bigg]\otimes \underbrace{(123)(12)(132)}_{(23)}\\[7pt]
&+\Bigg[(132)\Big\{\!\!\!\Big\{c,\{\!\!\{a,b\}\!\!\}_{(12)}\Big\}\!\!\!\Big\}_{\id}^{\text{left}}+(23)\Big\{\!\!\!\Big\{c,\{\!\!\{a,b\}\!\!\}_{(12)}\Big\}\!\!\!\Big\}_{\id}^{\text{right}}\\
&\hspace{90pt}+\Big\{\!\!\!\Big\{c,\{\!\!\{a,b\}\!\!\}_{(12)}\Big\}\!\!\!\Big\}_{\id}^{\text{aux}}+\Big\{\!\!\!\Big\{c,\{\!\!\{a,b\}\!\!\}_{(12)}\Big\}\!\!\!\Big\}_{(12)}^{\text{aux}}\Bigg]\otimes \underbrace{(123)(13)(132)}_{(12)}\\[7pt]
&+\Bigg[(132)\Big\{\!\!\!\Big\{c,\{\!\!\{a,b\}\!\!\}_{(12)}\Big\}\!\!\!\Big\}_{(12)}^{\text{left}}\Bigg]\otimes (123)+\Bigg[(23)\Big\{\!\!\!\Big\{c,\{\!\!\{a,b\}\!\!\}_{(12)}\Big\}\!\!\!\Big\}_{(12)}^{\text{right}}\Bigg]\otimes (132).
\end{align}
Finally, taking sum of these three expressions, one obtains \eqref{f99}.
    
\end{proof}

\subsection{Proof of Theorem \ref{th1}}
The inverse map to the one given in the formulation of the theorem assigns a di-twisted bracket $\{-,-\}$ on $\O(A)$ to a pair of a double bracket $\{\!\!\{-,-\}\!\!\}_{(12)}$ and a right double bracket $\{\!\!\{-,-\}\!\!\}_{\id}$ on $A$, and is given for any $\a=(a_1\otimes \ldots\otimes a_n)\otimes f\otimes u$ and $\b=(b_1\otimes \ldots\otimes b_m)\otimes g\otimes v$ by 
\begin{align}\label{f103}
    \{\a,\b\}=&\sum\limits_{l=1}^n\sum\limits_{r=1}^m a_1\otimes \ldots\otimes a_{l-1}\otimes \{\!\!\{a_l,b_r\}\!\!\}'_{\id}\otimes a_{l+1}\otimes \ldots\otimes a_n\otimes b_1\otimes\ldots\otimes b_{r-1}\otimes \{\!\!\{a_l,b_r\}\!\!\}''_{\id}\\
        &\pushright{\otimes b_{r+1}\otimes \ldots\otimes b_m\otimes \{\!\!\{a_l,b_r\}\!\!\}'''_{\id}\cdot f\cdot g\otimes u\times v}\\
            &+\sum\limits_{l=1}^n\sum\limits_{r=1}^m a_1\otimes \ldots\otimes a_{l-1}\otimes \{\!\!\{a_l,b_r\}\!\!\}'_{(12)}\otimes a_{l+1}\otimes \ldots\otimes a_n\otimes b_1\otimes\ldots\otimes b_{r-1}\otimes \{\!\!\{a_l,b_r\}\!\!\}''_{(12)}\\
                &\pushright{\otimes b_{r+1}\otimes \ldots\otimes b_m\otimes \{\!\!\{a_l,b_r\}\!\!\}'''_{(12)}\cdot f\cdot g\otimes (l\ n+r)(u\times v)}\\
                    &+\sum\limits_{l=1}^n  a_1\otimes \ldots\otimes a_{l-1}\otimes \{a_l,g\}'_{\id}\otimes a_{l+1}\otimes\ldots\otimes a_n\otimes b_1\otimes \ldots\otimes b_m\otimes \{a_l,g\}''_{\id}\cdot f\otimes u\times v\\
                        &+\sum\limits_{l=1}^n  a_1\otimes \ldots\otimes a_{l-1}\otimes \{a_l,g\}'_{(12)}\otimes a_{l+1}\otimes\ldots\otimes a_n\otimes b_1\otimes \ldots\otimes b_m\otimes \{a_l,g\}''_{(12)}\cdot f\otimes u\times v\\
                    &-\sum\limits_{r=1}^m a_1\otimes\ldots\otimes a_n\otimes b_1\otimes \ldots \otimes b_{r-1}\otimes \{b_r,f\}'_{\id}\otimes b_{r+1}\otimes \ldots \otimes b_m\otimes \{b_r,f\}''_{\id}\cdot g\otimes u\times v\\
                &-\sum\limits_{r=1}^m a_1\otimes\ldots\otimes a_n\otimes b_1\otimes \ldots \otimes b_{r-1}\otimes \{b_r,f\}'_{(12)}\otimes b_{r+1}\otimes \ldots \otimes b_m\otimes \{b_r,f\}''_{(12)}\cdot g\otimes u\times v\\
            &+a_1\otimes \ldots\otimes a_n\otimes b_1\otimes \ldots\otimes b_m\otimes \Big(\{f,g\}_{\id}+\{f,g\}_{(12)}\Big)\otimes u\times v.
  \end{align}
  
  Note that in the special case, when $\a=a\otimes\mathds{1}\otimes \id_1$ and $\b=b\otimes \mathds{1}\otimes \id_1$ one has
  \begin{equation}
      \{\a,\b\}=\{\!\!\{a,b\}\!\!\}_{\id}\otimes \id+\{\!\!\{a,b\}\!\!\}_{(12)}\otimes (12).
  \end{equation}
The lengthy formula \eqref{f103} is the obvious extension by the Leibniz rule of the special case above.

It is readily seen that the map $\{-,-\}:\O(A)\otimes \O(A)\longrightarrow\O(A)$ so defined is a skew-symmetric graded map of degree zero whose restriction to $\O(A)_n\otimes \O(A)_m$ is a morphism of $S(n)\times S(m)$-bimodules. It is straightforward to check that $\{\pi(\a),\b\}=\pi\{\a,\b\}$. It is an easy exercise to prove that $\{-,-\}$ satisfies the Leibniz rule, which completes the proof.

\begin{remark}\label{rem1}
    
Note that the expressions $\Jac^{\tau}(a,b,c)$, $\tau\in S(3)$, defined above, can be defined for any double and right double brackets, and these expressions satisfy the following ``skew-symmetry'' relations:
\begin{align}
    \Jac^{\tau}(\sigma(a_1,a_2,a_3))=(-1)^{\sigma}\sigma \Jac^{\sigma^{-1}\tau\sigma}(a_1,a_2,a_3), \hspace{20pt} \tau,\sigma\in S(3).
\end{align}
This follows from Theorem \ref{th1}, Lemma \ref{lemma2}, and the skew-symmetry of the di-twisted Jacobiator:
\begin{align}\label{f1}
    \Jac(\a_{\sigma^{-1}(1)},\a_{\sigma^{-1}(2)},\a_{\sigma^{-1}(3)})=(-1)^{\sigma}\operatorname{Ad}(\sigma^{|\a_1|,|\a_2|,|\a_3|})\Jac(\a_1,\a_2,\a_3).
\end{align}
\end{remark}
\subsection{Proof of Corollary \ref{cor1}}
    Consider the corresponding di-twisted bracket on $\O(A)$ provided by Theorem \ref{th1} and the di-twisted Jacobiator, which is the left-hand side of the Jacobi identity in Definition \ref{def2}. The Jacobiator is a di-twisted derivation in the last argument, i.e.,
    \begin{align}
        \Jac(\a,\b,\cc_1\cc_2)=\operatorname{Ad}((12)^{|\cc_1|,|\a|+|\b|})\Big[\cc_1\Jac(\a,\b,\cc_2)\Big]+\Jac(\a,\b,\cc_1)\cc_2,
    \end{align}
    skew-symmetric in the sense of \eqref{f1}, and satisfies
    \begin{align}
        \Jac(\pi(\a),\b,\cc)=\pi\Jac(\a,\b,\cc).
    \end{align}
    It is an easy exercise to prove these identities directly; however, there is an even simpler explanation of why they must hold. One way to convince yourself is to recall that $\O(A)$ equipped with the di-twisted bracket $\{-,-\}$ is a commutative algebra with a skew-symmetric bi-derivation in an appropriate category (of diagonal $\S$-bimodules), see \cite{ginzburg2010differentialoperatorsbvstructures} and \cite{fernández2025symplecticwheelgebrasnoncommutativegeometry}. The other way is to use Proposition \ref{prop3} and the following identity in $\orep$
    \begin{align}
        \Jac(\a,\b,\cc)_{\mathbf{i}\sqcup \mathbf{k}\sqcup \mathbf{p},\mathbf{j}\sqcup \mathbf{l}\sqcup \mathbf{q}}=\operatorname{Jac}(\a_{\mathbf{i},\mathbf{j}},\b_{\mathbf{k},\mathbf{l}},\cc_{\mathbf{p},\mathbf{q}}),
    \end{align}
    where $\operatorname{Jac}$ stands for the usual Jacobiator of the induced Poisson bracket $\{-,-\}_N$ in $\orep$.
    
    Hence, due to $\pi(a\otimes b\otimes f\otimes (12))=ab\otimes f\otimes \id_1$ for $a,b\in A$, $f\in\s$, the Jacobiator is zero if and only if it is zero on $\mathcal{G}\otimes \mathcal{G}\otimes \mathcal{G}\subset A\otimes A\otimes A\subset \O(A)\otimes \O(A)\otimes \O(A)$. Application of Lemma \ref{lemma2} and Remark \ref{rem1} completes the proof.

\section{Linear coupled double Poisson brackets and Poisson-left-pre-Lie algebras}\label{sec_linear}

In this section, we will restrict ourselves to the third case of Remark \ref{rem2}, i.e., we will work only with coupled double Poisson brackets such that $\{\!\!\{a,b\}\!\!\}_{\id}, \{\!\!\{a,b\}\!\!\}_{(12)}\in A\otimes A$.

Let $V$ be a vector space and denote by $\T(V)=\kk\oplus V\oplus V^{\otimes 2}\oplus \ldots$ the tensor algebra of $V$.
\begin{definition}\label{def1}
    A double/right double bracket $\br$ on $\T(V)$ is said to be \textit{linear} if for any $a,b\in V$ one has $\{\!\!\{a,b\}\!\!\}\in 1\otimes V\oplus V\otimes 1\subset \T(V)\otimes \T(V)$.   
\end{definition}

We refer the reader to \cite{pichereau2008double} and \cite{odesskii2013double} for linear double Poisson brackets.

\begin{proposition}[Proposition 10 in \cite{pichereau2008double}]\label{prop4}
    Any linear double Poisson bracket on $\T(V)$ is of the form
    \begin{align}
        \{\!\!\{a,b\}\!\!\}=1\otimes \mu(a,b)-\mu(b,a)\otimes 1,\hspace{20pt} a,b\in V
    \end{align}
    for an associative multiplication $\mu:V\otimes V\longrightarrow V$.
\end{proposition}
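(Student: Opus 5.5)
We have to prove Proposition \ref{prop4}: every linear double Poisson bracket on $\T(V)$ has the form $\{\!\!\{a,b\}\!\!\}=1\otimes\mu(a,b)-\mu(b,a)\otimes 1$ for an associative $\mu$. The plan is to parametrize the bracket on generators, use skew-symmetry to reduce to one bilinear map, and then read associativity off the double Jacobi identity on generators.

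\textbf{Parametrization.} For $a,b\in V$, linearity means
\begin{align}
\{\!\!\{a,b\}\!\!\}=1\otimes \mu(a,b)+\nu(a,b)\otimes 1
\end{align}
for bilinear maps $\mu,\nu\colon V\otimes V\to V$. By the outer Leibniz rule and its inner counterpart, these values on $V\otimes V$ determine the bracket on all of $\T(V)$.

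\textbf{Skew-symmetry.} The condition $\{\!\!\{a,b\}\!\!\}=-(12)\{\!\!\{b,a\}\!\!\}$ gives
\begin{align}
1\otimes\mu(a,b)+\nu(a,b)\otimes 1=-\mu(b,a)\otimes 1-1\otimes \nu(b,a).
\end{align}
If $V$ is nonzero, the summands $1\otimes V$ and $V\otimes 1$ of $\T(V)\otimes\T(V)$ intersect trivially, so comparing components yields $\nu(a,b)=-\mu(b,a)$. This gives the asserted shape. Conversely, any bilinear $\mu$ extends uniquely to a skew-symmetric double bracket via the Leibniz rules, which is standard for free algebras.

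\textbf{Double Jacobi identity.} I would compute $\Jac(a,b,c)$ for $a,b,c\in V$. Write
\begin{align}
\{\!\!\{b,c\}\!\!\}=1\otimes\mu(b,c)-\mu(c,b)\otimes 1.
\end{align}
Then
\begin{align}
\Bigl\{\!\!\!\Bigl\{a,\{\!\!\{b,c\}\!\!\}\Bigr\}\!\!\!\Bigr\}_{\mathbf L}=-\{\!\!\{a,\mu(c,b)\}\!\!\}\otimes 1,
\end{align}
since $\{\!\!\{a,1\}\!\!\}=0$. Substituting the formula again,
\begin{align}
\Bigl\{\!\!\!\Bigl\{a,\{\!\!\{b,c\}\!\!\}\Bigr\}\!\!\!\Bigr\}_{\mathbf L}=-1\otimes \mu(a,\mu(c,b))\otimes 1+\mu(\mu(c,b),a)\otimes 1\otimes 1.
\end{align}
Apply $(123)$ and $(123)^2$ to the cyclic permutations of this term and sum. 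The result lies in the direct sum of the subspaces $V\otimes 1\otimes 1$, $1\otimes V\otimes 1$ and $1\otimes 1\otimes V$.

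Take the component in $1\otimes V\otimes 1$. It receives $-\mu(a,\mu(c,b))$ from the first term. From the third term, $(123)^2$ sends the piece $\mu(\mu(b,a),c)\otimes1\otimes1$ to $1\otimes\mu(\mu(b,a),c)\otimes1$. From the second term, $(123)$ sends its pieces only to the $1\otimes1\otimes V$ and $V\otimes 1\otimes 1$ positions. So this component equals
\begin{align}
\mu(\mu(b,a),c)-\mu(a,\mu(c,b)).
\end{align}
The other two components are cyclic relabelings of the same expression. Hence the Jacobi identity on generators is equivalent to $\mu(\mu(x,y),z)=\mu(x,\mu(y,z))$ for all $x,y,z\in V$, after renaming $b\to x$, $a\to y$, $c\to z$. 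That is exactly associativity of $\mu$.

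\textbf{Extension from generators and the main obstacle.} Vanishing of $\Jac$ on generators forces it to vanish on all of $\T(V)$, because $\Jac$ is a derivation in each argument for a skew-symmetric double bracket (Van den Bergh's result, the classical analogue of Corollary \ref{cor1}). The main obstacle is the bookkeeping of where $(123)$ sends the factor $1$, so that each component is identified correctly; everything else is mechanical.
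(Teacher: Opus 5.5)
Your strategy (skew-symmetry forces the shape, then evaluate the double Jacobiator on $a,b,c\in V$) is the right one. It is exactly the computation recorded in Remark~\ref{rem3}; the paper itself only cites \cite{pichereau2008double}. Your skew-symmetry step and your first Jacobi term are correct. But the step that carries all the content is wrong, because you applied the cyclic permutations with the reversed convention. In the paper, $(123)(x'\otimes x''\otimes x''')=x'''\otimes x'\otimes x''$, so $(123)$ moves slot $1$ to slot $2$, and $(123)^2$ moves slot $1$ to slot $3$. Hence
\begin{align*}
(123)\Bigl\{\!\!\!\Bigl\{b,\{\!\!\{c,a\}\!\!\}\Bigr\}\!\!\!\Bigr\}_{\mathbf L}&=1\otimes\mu(\mu(a,c),b)\otimes 1-1\otimes 1\otimes\mu(b,\mu(a,c)),\\
(123)^2\Bigl\{\!\!\!\Bigl\{c,\{\!\!\{a,b\}\!\!\}\Bigr\}\!\!\!\Bigr\}_{\mathbf L}&=1\otimes 1\otimes\mu(\mu(b,a),c)-\mu(c,\mu(b,a))\otimes 1\otimes 1.
\end{align*}
So it is the second term, not the third, that reaches the middle slot. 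The correct components are $\Assoc(c,b,a)$ in $V\otimes 1\otimes 1$, $\Assoc(a,c,b)$ in $1\otimes V\otimes 1$, and $\Assoc(b,a,c)$ in $1\otimes 1\otimes V$, as in Remark~\ref{rem3}. Since these are distinct direct summands of $\T(V)^{\otimes 3}$, the Jacobi identity on generators is equivalent to associativity of $\mu$.

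As written, your argument does not reach that conclusion. Your middle component $\mu(\mu(b,a),c)-\mu(a,\mu(c,b))$ is not an associator. The renaming $b\to x$, $a\to y$, $c\to z$ turns it into $\mu(\mu(x,y),z)-\mu(y,\mu(z,x))$, not $\mu(\mu(x,y),z)-\mu(x,\mu(y,z))$. The identity $(xy)z=y(zx)$ is genuinely different from associativity. In the multilinear degree-$3$ part of the free nonassociative algebra, it only pairs each left-normed monomial $(ab)c$ with the right-normed monomial $b(ca)$, and leaves $(ab)c-a(bc)$ nonzero. So the step ``that is exactly associativity'' is a non sequitur; you reached the right answer only through a second error. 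You need to redo the computation with the paper's convention, as above.

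A minor point: for the direction actually stated, your last paragraph is unnecessary. The bracket is assumed Poisson, so $\Jac$ vanishes on $V$ directly; the extension from generators is needed only for the converse.
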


\begin{remark}\label{rem3}
    One can check that the double Jacobiator evaluated at $a,b,c\in V$ of the double bracket $\br$ defined by $\{\!\!\{a,b\}\!\!\}=1\otimes \mu(a,b)-\mu(b,a)\otimes 1$, for $a,b\in V$, equals 
\begin{align}
    1\otimes 1\otimes \Assoc(b,a,c)+1\otimes \Assoc(a,c,b)\otimes 1+\Assoc(c,b,a)\otimes 1\otimes 1,
\end{align}
where $\Assoc(a,b,c)=\mu\big(\mu(a,b),c\big)-\mu\big(a,\mu(b,c)\big)$ is the associator of $\mu$.
\end{remark}

Recall the definition of left pre-Lie algebras, see \cite{burde2006left}, or \cite{manchon2011short}, or \cite{bai2021introduction}.

\begin{definition}
    A left pre-Lie algebra is a vector space $V$ with a linear map $\mu:V\otimes V\longrightarrow V$ whose associator is symmetric in the first two arguments, i.e.,
    \begin{align}
        \mu\big(\mu(a,b),c\big)-\mu\big(a,\mu(b,c)\big)=\mu\big(\mu(b,a),c\big)-\mu\big(b,\mu(a,c)\big).
    \end{align}
\end{definition}

\begin{corollary}\label{cor2}
    Any linear right double bracket on $\T(V)$ is of the form
    \begin{align}
        \{\!\!\{a,b\}\!\!\}=1\otimes \mu(a,b)-\mu(b,a)\otimes 1,\hspace{20pt} a,b\in V
    \end{align}
    for a left pre-Lie algebra structure $\mu$ on $V$.
\end{corollary}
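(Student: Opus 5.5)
The plan is to mimic the proof of Proposition~\ref{prop4}, replacing Van den Bergh's double Jacobi identity by the right double Jacobi identity $\Jac(a,b,c)-(12)\Jac(b,a,c)=0$. I read the statement as concerning right double \emph{Poisson} brackets in the sense of Fairon--McCulloch, landing in $\T(V)\otimes\T(V)$. This is the setting of case 2) of Remark~\ref{rem2} with $\br_{(12)}=0$, and also the standing assumption of this section. The pre-Lie condition can only come from the Jacobi identity, so this reading is needed.

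\emph{Step 1 (shape of the bracket).} By Definition~\ref{def1}, for $a,b\in V$ we may write $\{\!\!\{a,b\}\!\!\}=1\otimes\mu(a,b)+\nu(a,b)\otimes 1$ for bilinear maps $\mu,\nu\colon V\otimes V\to V$. Skew-symmetry $\{\!\!\{a,b\}\!\!\}=-(12)\{\!\!\{b,a\}\!\!\}$ gives $\nu(a,b)=-\mu(b,a)$, which is the claimed form. A right double bracket on $\T(V)$ is determined by its values on $V$ through the right Leibniz rule and its left counterpart. Conversely, any bilinear $\mu$ extends uniquely to a right double bracket, because $\T(V)$ is free. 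So it remains to determine when this bracket satisfies the Jacobi identity.

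\emph{Step 2 (reduction to generators).} By Corollary~\ref{cor1}, applied with $\br_{(12)}=0$ and everything landing in $\T(V)^{\otimes 2}$, or equivalently by the corresponding statement of Fairon--McCulloch, it suffices to check the right double Jacobi identity for $a,b,c\in V$.

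\emph{Step 3 (computing the Jacobiator on $V$).} The expression $\Jac(a,b,c)$ only involves $\{\!\!\{a,x'\}\!\!\}$ with $x'\in\kk\cdot 1\oplus V$. Here $\{\!\!\{a,1\}\!\!\}=0$ by either Leibniz rule. Hence the choice of bimodule structure is irrelevant at this stage, and the computation of Remark~\ref{rem3} applies verbatim:
\begin{align}
\Jac(a,b,c)=1\otimes 1\otimes \Assoc(b,a,c)+1\otimes \Assoc(a,c,b)\otimes 1+\Assoc(c,b,a)\otimes 1\otimes 1 .
\end{align}
Swapping $a$ and $b$ and applying $(12)$ gives
\begin{align}
(12)\Jac(b,a,c)=1\otimes 1\otimes\Assoc(a,b,c)+1\otimes\Assoc(c,a,b)\otimes 1+\Assoc(b,c,a)\otimes 1\otimes 1 .
\end{align}
The three subspaces $1\otimes1\otimes V$, $1\otimes V\otimes 1$ and $V\otimes1\otimes1$ of $\T(V)^{\otimes3}$ are linearly independent. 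Therefore the right double Jacobi identity on $V$ is equivalent to the three equalities
\begin{align}
\Assoc(b,a,c)=\Assoc(a,b,c),\qquad \Assoc(a,c,b)=\Assoc(c,a,b),\qquad \Assoc(c,b,a)=\Assoc(b,c,a),
\end{align}
holding for all $a,b,c\in V$. Each of these is the left pre-Lie identity, up to relabelling the variables. This proves both directions of the corollary.

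\emph{Main obstacle.} The only delicate point is justifying that Remark~\ref{rem3} may be reused for a right double bracket. Two facts are needed. First, the formula for $\{\!\!\{a,x\}\!\!\}_{\mathbf L}$ in the definition of $\Jac$ uses only the values $\{\!\!\{a,v\}\!\!\}$ for $v\in V$, together with $\{\!\!\{a,1\}\!\!\}=0$. Second, the permutation conventions for $(123)$ are the same. If I were unsure of this, I would simply recompute the nine terms of $\Jac(a,b,c)$ directly from $\{\!\!\{a,b\}\!\!\}=1\otimes\mu(a,b)-\mu(b,a)\otimes1$; this is a short routine expansion.
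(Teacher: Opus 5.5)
Your proposal is correct and follows essentially the same route as the paper, which deduces the corollary from the Jacobiator formula of Remark~\ref{rem3} together with a reduction to generators. The paper cites Lemma 3.12 of Fairon--McCulloch for that reduction; your appeal to Corollary~\ref{cor1} with $\br_{(12)}=0$ works equally well, and your component-by-component comparison of $\Jac(a,b,c)$ with $(12)\Jac(b,a,c)$ spells out the step the paper leaves implicit.
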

\begin{proof}
    Follows from Remark \ref{rem3} above and Lemma 3.12 in \cite{fairon2023around}.
\end{proof}

\begin{definition}[Poisson-left-pre-Lie algebras\footnote{Not to be confused with left pre-Poisson algebras introduced in \cite{aguiar2000pre}}]
    Let $V$ be a vector space and $\cdot$, $\{-,-\}$ be linear maps $V\otimes V\longrightarrow V$. We call $\big(V,\cdot, \{-,-\}\big)$ a \textit{Poisson-left-pre-Lie algebra} if $\big(V,\cdot\big)$ is an associative algebra (not necessarily commutative), $\big(V,\{-,-\}\big)$ is a left pre-Lie algebra, and one has
    \begin{align}
        &\{ab,c\}=\{ba,c\},\\[7pt]
        &\{a,bc\}=\{a,b\}c+b\{a,c\}.
    \end{align}
\end{definition}

\begin{example}
\begin{enumerate}[label=\theenumi),leftmargin=3ex]
    \item Let $A$ be a commutative algebra with a derivation $\partial$. Then $A$ is a Poisson-left-pre-Lie algebra for $\{a,b\}:=a\partial(b)$.
    \vspace{5pt}

    \item Let us consider an example of the form 1) but with a noncommutative $A$. Let $U$ be the associative algebra of strictly upper-triangular matrices of size $n\times n$. It is graded by the distance to the main diagonal, i.e., $U_k:=\operatorname{span}_{\Bbbk}(E_{i,i+k}\mid i=1,\ldots,n-k)$, $U_kU_l\subset U_{k+l}$ with $U_n=0$. Take $x\in U_{n-3}$. Then $U$ is a Poisson-left-pre-Lie algebra for $\{a,b\}:=a[x,b]=a(xb-bx)$. 

    \item Any associative algebra $A$ with an associative multiplication $m$ such that any triple product is zero, i.e., $m(m(a,b),c)=0$ for any $a,b,c\in A$, is trivially a Poisson-left-pre-Lie algebra for $\cdot=\{-,-\}=m$. This includes for example two-step nilpotent Lie algebras.
    \vspace{5pt}

    \item Let $A=\kk[x_1,\ldots,x_n]$ be the algebra of commutative polynomials in $n$ variables. Then the left $A$-module of derivations of $A$, denoted by $\operatorname{Der}(A)$, is isomorphic to $A^{\oplus n}$, where the $i$-th component corresponds to the partial derivative $\partial_i=\frac{\partial}{\partial x_i}$. Recall that $\operatorname{Der}(A)$ is a left pre-Lie algebra with the product
    \begin{align}
        \{f\partial,g\delta\}:=f\partial(g)\delta,\hspace{20pt}\partial,\delta\in\{\partial_1,\ldots,\partial_n\},
    \end{align}
    see, e.g., Section 3.1 in \cite{manchon2011short}. Equip $\Der(A)$ with an associative product via the $A$-module isomorphism $\operatorname{Der}(A)\simeq A^{\oplus n}$, i.e., set
    \begin{align}
        (f\partial_i)\cdot (g\partial_j):=\delta_{i,j}fg\partial_j,
    \end{align}
    where $\delta_{i,j}$ is the Kronecker delta. Then $\operatorname{Der}(A)$ equipped with $\cdot$ and $\{-,-\}$ is a Poisson-left-pre-Lie algebra.

\end{enumerate}
\end{example}

\begin{definition}
    We say that a pair of coupled double Poisson brackets $\br_{\id}, \br_{(12)}:\T(V)\otimes \T(V)\longrightarrow \T(V)\otimes \T(V)$ is a pair of \textit{linear} coupled double Poisson brackets if each of the brackets is linear in the sense of Definition \ref{def1}.
\end{definition}

\begin{theorem}\label{thm_linear}
    Any pair of linear coupled double Poisson brackets on $\T(V)$ is of the form
    \begin{align}
        &\{\!\!\{a,b\}\!\!\}_{\id}=1\otimes \{a,b\}-\{b,a\}\otimes 1, \hspace{-100pt}&a,b\in V,\\[7pt]
        &\{\!\!\{a,b\}\!\!\}_{(12)}=1\otimes ab-ba\otimes 1, &a,b\in V
    \end{align}
    for a Poisson-left-pre-Lie algebra $\big(V,\cdot,\{-,-\}\big)$.
\end{theorem}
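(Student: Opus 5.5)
By Remark \ref{rem2}(3), a pair of linear coupled double Poisson brackets consists of three pieces of data. First, $\br_{(12)}$ is a linear double Poisson bracket in the sense of Van den Bergh. Second, $\br_{\id}$ is a linear right double Poisson bracket in the sense of Fairon--McCulloch. Third, the two are related by the compatibility condition \eqref{f2}. The plan is to handle the first two pieces with the known classifications and reduce everything to evaluating \eqref{f2} on generators.

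By Proposition \ref{prop4}, $\{\!\!\{a,b\}\!\!\}_{(12)}=1\otimes ab-ba\otimes 1$ for an associative product $ab=\mu(a,b)$ on $V$. By Corollary \ref{cor2}, $\{\!\!\{a,b\}\!\!\}_{\id}=1\otimes\{a,b\}-\{b,a\}\otimes 1$ for a left pre-Lie product $\{-,-\}$ on $V$. Both brackets are extended to $\T(V)$ by the respective Leibniz rules. By Corollary \ref{cor1}, applied with $\mathcal G=V$, it suffices to check condition 2) of Definition \ref{def16} for $a,b,c\in V$. Conditions 1) and 3) are already encoded in the two classifications. Conversely, any such pair of products gives brackets satisfying 1) and 3), so the theorem reduces to the following claim: for $a,b,c\in V$, the cross-Jacobiator \eqref{f2} vanishes if and only if $\{ab,c\}=\{ba,c\}$ and $\{a,bc\}=\{a,b\}c+b\{a,c\}$ hold for all $a,b,c\in V$.

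Next I would compute the four terms of \eqref{f2} explicitly. Each inner bracket has the form $1\otimes x-y\otimes 1$. The outer bracket is applied to the first or second tensor factor and placed as the left, right or aux expression prescribed. The aux terms vanish here because everything lands in $A\otimes A$ and the $\s$-components are trivial. Two points need care. First, brackets of the form $\{\!\!\{a,1\}\!\!\}$ vanish, so only terms with a generator in the relevant slot survive. Second, the inner brackets are linear in the generators, so the outer bracket only ever sees elements of $V$ and no Leibniz expansion is required. The result is a sum of terms in $V^{\otimes 3}$ with two factors equal to $1$, sorted by the position of the nontrivial factor:
\begin{align}
1\otimes 1\otimes X(a,b,c)+1\otimes Y(a,b,c)\otimes 1+Z(a,b,c)\otimes 1\otimes 1 ,
\end{align}
where $X,Y,Z$ are combinations of mixed products such as $\{a,bc\}$, $\{a,b\}c$, $b\{a,c\}$, $\{ca,b\}$ and $\{ac,b\}$.

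The main obstacle is this bookkeeping: getting the permutations $(13)$, $(132)$, $(23)$ and the left/right placement exactly right, so that the mixed products land in the correct slots. My first step would be to carry out the computation in the equivalent Yang--Baxter form $[\RR^{12},r^{13}+r^{23}]=0$, applied to $a\otimes b\otimes c$, which the preceding remark says is equivalent to \eqref{f2}. I expect each of $X,Y,Z$ to reduce, after renaming variables, to either the Leibniz identity $\{a,bc\}-\{a,b\}c-b\{a,c\}$ or the difference $\{ab,c\}-\{ba,c\}$, possibly combined with each other. Since the three slots are linearly independent, vanishing of the cross-Jacobiator is then equivalent to these two identities. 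Together with associativity of $\cdot$ and the left pre-Lie property of $\{-,-\}$, this is exactly the definition of a Poisson-left-pre-Lie algebra. If a slot yields a combination of both identities, I would separate them by specializing variables or by using the other slots.
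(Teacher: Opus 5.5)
Your proposal is correct and is essentially the paper's proof. You classify the two brackets via Proposition~\ref{prop4} and Corollary~\ref{cor2}, reduce the cross-Jacobi identity \eqref{f2} to generators via Corollary~\ref{cor1}, and read off the three independent tensor slots. Doing the bookkeeping through $[\RR^{12},r^{13}+r^{23}](a\otimes b\otimes c)$ reproduces the paper's four terms one by one, and the slots come out unmixed: $1\otimes 1\otimes\bigl(\{ba,c\}-\{ab,c\}\bigr)$ in the third slot, $1\otimes\bigl(\{c,ab\}-\{c,a\}b-a\{c,b\}\bigr)\otimes 1$ in the second, and, up to sign, the same Leibniz defect with $a,b$ swapped in the first, so your fallback of separating combined identities is never needed.
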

\begin{proof}
    Due to Proposition \ref{prop4} and Corollary \ref{cor2}, we know that $\br_{\id}$ is a right double Poisson bracket and $\br_{(12)}$ is a double Poisson bracket. Now, we need to check the compatibility condition \eqref{f2}. It is enough to check it on generators $V\subset \T(V)$ due to Corollary \ref{cor1}. 
    
    Let $a,b,c\in V$. Then one has
    \begin{align}
       \Big\{\!\!\!\Big\{a,\{\!\!\{b,c\}\!\!\}_{\id}\Big\}\!\!\!\Big\}_{(12)}^{\text{left}}=-\{\!\!\{a,\{c,b\}\}\!\!\}_{(12)}\otimes 1=-1\otimes a\{c,b\}\otimes 1+\{c,b\}a\otimes 1\otimes 1
    \end{align}

    \begin{align}
        (13)\Big\{\!\!\!\Big\{b,\{\!\!\{c,a\}\!\!\}_{\id}\Big\}\!\!\!\Big\}_{(12)}^{\text{right}}=(13)\Big[1\otimes \{\!\!\{b,\{c,a\}\}\!\!\}_{(12)}\Big]&=(13)\Big[1\otimes 1\otimes b\{c,a\}-1\otimes \{c,a\}b\otimes 1\Big]\\
        &=b\{c,a\}\otimes 1\otimes 1-1\otimes \{c,a\}b\otimes 1
    \end{align}

    \begin{align}
        (132)\Big\{\!\!\!\Big\{c,\{\!\!\{a,b\}\!\!\}_{(12)}\Big\}\!\!\!\Big\}_{\id}^{\text{left}}=-(132)\Big[\{\!\!\{c,ba\}\!\!\}_{\id}\otimes 1\Big]&=-(132)\Big[1\otimes \{c,ba\}\otimes 1-\{ba,c\}\otimes 1\otimes 1\Big]\\
        &=-\{c,ba\}\otimes 1\otimes 1+1\otimes 1\otimes \{ba,c\}
    \end{align}

    \begin{align}
        (23)\Big\{\!\!\!\Big\{c,\{\!\!\{a,b\}\!\!\}_{(12)}\Big\}\!\!\!\Big\}_{\id}^{\text{right}}=(23)\Big[1\otimes \{\!\!\{c,ab\}\!\!\}_{\id}\Big]&=(23)\Big[1\otimes 1\otimes \{c,ab\}-1\otimes \{ab,c\}\otimes 1\Big]\\
        &=1\otimes \{c,ab\}\otimes 1-1\otimes 1\otimes \{ab,c\}
    \end{align}

    Taking sum, one obtains
    \begin{align}
        &1\otimes 1\otimes \Big[\{ba,c\}-\{ab,c\}\Big]\\
        &+1\otimes \Big[-a\{c,b\}-\{c,a\}b+\{c,ab\}\Big]\otimes 1\\
        &+\Big[\{c,b\}a+b\{c,a\}-\{c,ba\}\Big]\otimes 1\otimes 1,
    \end{align}    
    which is zero.
\end{proof}

\section{Quadratic coupled double Poisson brackets}\label{sec1}
In this section, we continue to consider only coupled double Poisson brackets from the third case of Remark \ref{rem2}, i.e., such that $\{\!\!\{a,b\}\!\!\}_{\id}, \{\!\!\{a,b\}\!\!\}_{(12)}\in A\otimes A$. 

Let $V$ be a finite dimensional vector space. Consider an operator $\mathcal{R}:V\otimes V\longrightarrow V\otimes V$ that satisfies $\mathcal R^{12}=-\mathcal R^{21}$ and the associative Yang-Baxter equation
\begin{align}\label{AYBE}\tag{AYBE}
	\mathcal R^{12}\mathcal R^{13}-\mathcal R^{23}\mathcal R^{12} +\mathcal R^{13}\mathcal R^{23}=0
\end{align}

Here $\mathcal R^{ab}$ denotes $\mathcal R$ acting on the $a$-th and $b$-th factors in $V^{\otimes 3}$, e.g., $\mathcal{R}^{21}:=(12)\mathcal{R}(12)$.

We will denote the left-hand side by $\operatorname{AYBE}_{\RR}$.

\begin{proposition}[\cite{schedler2009poisson}, \cite{odesskii2013double}]
	The assignment $u\otimes v\mapsto \RR(u,v)$ for $u,v\in V$, defines a double Poisson bracket $\dbr{-}{-}_{(12)}:\T(V)\otimes\T(V)\longrightarrow\T(V)\otimes\T(V)$ on $\T(V)$.
\end{proposition}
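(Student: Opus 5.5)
We prove the proposition in three steps: define the bracket on generators, extend it by the outer Leibniz rule, and then check skew-symmetry and the double Jacobi identity, reducing the latter to $\operatorname{AYBE}_{\RR}$.

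\textbf{Step 1: definition on generators.} For $u,v\in V$ we set $\dbr{u}{v}_{(12)}:=\RR(u\otimes v)\in V\otimes V\subset \T(V)\otimes\T(V)$. Since $\T(V)$ is free on $V$, a map $V\otimes V\to \T(V)\otimes \T(V)$ extends uniquely to a bilinear map on $\T(V)$. In the second argument it is a derivation for the outer bimodule structure; in the first it is a derivation for the inner structure. Explicitly, for words $a=a_1\cdots a_n$ and $b=b_1\cdots b_m$ with $a_i,b_j\in V$,
\[
\dbr{a}{b}_{(12)}=\sum_{i,j} b_1\cdots b_{j-1}\,\RR(a_i\otimes b_j)'\,a_{i+1}\cdots a_n\otimes a_1\cdots a_{i-1}\,\RR(a_i\otimes b_j)''\,b_{j+1}\cdots b_m .
\]
Both Leibniz rules hold by construction. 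The only point to check is that the two extensions (first in one argument, then in the other, or in the opposite order) agree. They do, because the inner and outer bimodule structures commute.

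\textbf{Step 2: skew-symmetry.} On generators, $\RR^{12}=-\RR^{21}$ says exactly that $\dbr{u}{v}_{(12)}=-(12)\dbr{v}{u}_{(12)}$. Applying $(12)$ interchanges the inner and outer bimodule structures. Hence both sides of the skew-symmetry identity are biderivations of the same type, and they agree on all of $\T(V)$.

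\textbf{Step 3: the double Jacobi identity.} The double Jacobiator $\Jac$ of a skew-symmetric double bracket is a derivation in its last argument for the outer bimodule structure on $\T(V)^{\otimes 3}$. Van den Bergh's Proposition 2.3.1 in \cite{van2008double}, or the special case of Corollary~\ref{cor1} with the $\id$-bracket equal to zero, then reduces the identity to $a,b,c\in V$. For such generators we write $\RR(b\otimes c)=\sum x\otimes y$ with $x,y\in V$, and compute
\[
\dbr{a}{\dbr{b}{c}}_{\mathbf L}=\sum \RR(a\otimes x)\otimes y=\RR^{12}\RR^{23}(a\otimes b\otimes c).
\]
The other two terms of $\Jac(a,b,c)$ are obtained by conjugating with $(123)$ and $(132)$. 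They become $\RR^{23}\RR^{31}(a\otimes b\otimes c)$ and $\RR^{31}\RR^{12}(a\otimes b\otimes c)$, using the convention $(123)X^{ij}(123)^{-1}=X^{\sigma(i)\sigma(j)}$. So $\Jac(a,b,c)=(\RR^{12}\RR^{23}+\RR^{23}\RR^{31}+\RR^{31}\RR^{12})(a\otimes b\otimes c)$.

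It remains to rewrite this in the form $\RR^{12}\RR^{13}-\RR^{23}\RR^{12}+\RR^{13}\RR^{23}$, using $\RR^{31}=-\RR^{13}$ and, if needed, relabelling or reversing the order of composition (transposing the expression). This identification is Schedler's observation \cite{schedler2009poisson}.

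\textbf{Main obstacle.} The main difficulty is Step 3: getting the conjugation conventions for $(123)$ acting on $V^{\otimes3}$ exactly right, so that the three Jacobiator terms match the three AYBE terms with the correct signs and ordering. I would settle this by testing on elementary tensors, writing $\RR$ in a basis as $\RR(e_i\otimes e_j)=\sum r_{ij}^{kl}e_k\otimes e_l$. Both sides then become the same cubic expression in the structure constants $r_{ij}^{kl}$, possibly after using skew-symmetry of $\RR$.
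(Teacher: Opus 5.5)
Your proposal is correct and follows the same route the paper relies on through its citations: reduce the double Jacobi identity to generators, then identify the Jacobiator on $V^{\otimes 3}$ with the AYBE operator. The rewriting you leave open is just conjugation by $(13)$ together with $\RR^{ji}=-\RR^{ij}$, which gives $\RR^{12}\RR^{23}+\RR^{23}\RR^{31}+\RR^{31}\RR^{12}=-(13)\,\operatorname{AYBE}_{\RR}\,(13)$, i.e.\ $\Jac(a,b,c)=-(13)\operatorname{AYBE}_{\RR}(c,b,a)$ as quoted in the paper; no transposition of the composition order is needed, and that move would not be legitimate here anyway.
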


Consider now a linear map $r:V\otimes V\longrightarrow V\otimes V$ such that $r$ satisfies $r^{12}=-r^{21}$ and the classical Yang-Baxter equation
\begin{align}\label{CYBE}\tag{CYBE}
    [r^{12},r^{13}] + [r^{12},r^{23}] + [r^{13},r^{23}] =0
\end{align}

We will denote the left-hand side by $\operatorname{CYBE_r}$.

\begin{proposition}[Proposition 3.17 \cite{fairon2023around}]
    The assignment $u\otimes v\mapsto r(u,v)$ for $u,v\in V$ defines a right double Poisson bracket $\dbr{-}{-}_{\id}:\T(V)\otimes \T(V)\longrightarrow \T(V)\otimes \T(V)$ on $\T(V)$.
\end{proposition}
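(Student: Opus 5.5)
The plan is to reduce to Fairon--McCulloch's right double Jacobi identity, in the form in which it appears as the second case of Remark~\ref{rem2}. Let $r$ be extended to $\T(V)$ by the right Leibniz rule, written as $\dbr{u}{v}_{\id}=r(u,v)=r'\otimes r''$. Then:
\begin{itemize}
\item Skew-symmetry of the extension follows from $r^{12}=-r^{21}$. The standard argument (the left Leibniz rule in the first argument follows from skew-symmetry) shows the extension is well defined. This works because $\T(V)$ is free, so a right-derivation in the second argument is determined freely by its values on $V$.
\item By Corollary~\ref{cor1} with $\mathcal{G}=V$, and with $\dbr{-}{-}_{(12)}=0$ so that only relation 1) is nontrivial, it suffices to verify
\[
\mathbbnew{Jac}(a,b,c)-(12)\mathbbnew{Jac}(b,a,c)=0
\]
for $a,b,c\in V$.
\item The aux terms vanish because the bracket lands in $A\otimes A$. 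So relation 1) reduces to the left and right terms only.
\end{itemize}

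The second step is to compute each term for generators. Since $\dbr{b}{c}_{\id}=r(b,c)\in V\otimes V$, the left term is
\[
\Big\{\!\!\!\Big\{a,\dbr{b}{c}_{\id}\Big\}\!\!\!\Big\}_{\id}^{\text{left}}=r^{12}r^{23}(a\otimes b\otimes c),
\]
read as an operator composition applied to $a\otimes b\otimes c$, with the legs arranged appropriately. Similarly, the right term is $r^{23}$ applied after $r^{13}$-type placements. Each of the six terms in relation 1) is then $\sigma\circ r^{ij}r^{kl}\circ\sigma'$ applied to $a\otimes b\otimes c$, where $\sigma,\sigma'$ are explicit permutations.

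I would then use skew-symmetry $r^{ij}=-r^{ji}$ and conjugation identities of the form $\sigma r^{ij}\sigma^{-1}=r^{\sigma(i)\sigma(j)}$ to bring every term to the form $\pm\, r^{ij}r^{kl}$ acting on $a\otimes b\otimes c$. The expectation is that the three left terms give $r^{12}r^{23}$, $r^{23}r^{13}$ and $r^{13}r^{12}$ type products. The three right terms, carrying the extra $(12)$, $(132)$ and $(123)$ twists, should then supply exactly the opposite-order products with a minus sign. The sum becomes
\[
[r^{12},r^{13}]+[r^{12},r^{23}]+[r^{13},r^{23}]=\operatorname{CYBE}_r,
\]
possibly up to an overall permutation, which vanishes by assumption.

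The main obstacle is the bookkeeping in this second step: determining precisely which leg each permutation moves and the resulting signs. If relation 1) turns out to give the CYBE only up to a global permutation, that is harmless. If some terms appear not to pair into commutators, I would first double-check the conventions for the left and right terms by testing the computation on the rank-one example $V=\kk$ with $r=0$. I would also test it against a simple nonzero skew $r$ on $\kk^2$. Alternatively, since Proposition~3.17 of \cite{fairon2023around} is cited as the statement itself, one may simply invoke it together with the second case of Remark~\ref{rem2}. The only remaining content is to confirm that their right double Jacobiator, evaluated on generators, matches the $(12)$-weak Jacobiator above, which is the content of the footnote to their definition.
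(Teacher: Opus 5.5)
Your proof is correct, and the bookkeeping you flag as the main obstacle comes out exactly as you predict. Your route does differ from the paper's, though. With the paper's conventions, the left term is $r^{12}r^{23}(a\otimes b\otimes c)$ and the right term is $(12)\,r^{13}r^{23}(a\otimes b\otimes c)$. That is, the inner bracket acts first on legs $2,3$, then $r$ acts on legs $1,3$, and then the first two legs are flipped; it is not $r^{23}$ applied after $r^{13}$. Using $\sigma r^{ij}\sigma^{-1}=r^{\sigma(i)\sigma(j)}$ and $r^{ji}=-r^{ij}$, the three left terms and the three right terms of relation 1) give, on $a\otimes b\otimes c$,
\[
\bigl(r^{12}r^{23}-r^{23}r^{13}-r^{13}r^{12}\bigr)+\bigl(r^{13}r^{23}+r^{12}r^{13}-r^{23}r^{12}\bigr)=\operatorname{CYBE}_r .
\]
No residual global permutation appears. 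Your proposed sanity check on $V=\kk$ with $r=0$ tests nothing; the $\kk^2$ test would be the meaningful one, although the computation above makes it unnecessary.

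The paper does not carry out this computation. It quotes from the proof of Theorem 2.8 in Odesskii--Rubtsov--Sokolov that the Van den Bergh Jacobiator of $r$ on generators equals $-(13)\operatorname{AYBE}_r(c,b,a)$, which is exactly your first bracket above. It then observes that the $(12)$-antisymmetrization $\Jac(a,b,c)-(12)\Jac(b,a,c)$ turns this into $-(13)\operatorname{CYBE}_r(c,b,a)=\operatorname{CYBE}_r(a\otimes b\otimes c)$, using skew-symmetry of $r$.

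The paper's version is shorter and shows the structural reason: antisymmetrizing the AYBE expression under $(12)$ yields the CYBE, mirroring Schedler's AYBE correspondence for the $(12)$-bracket. However, it leaves the reduction to generators implicit. Your version is self-contained within the paper's formalism. It makes explicit the use of Corollary~\ref{cor1}, the triviality of relations 2) and 3) when $\br_{(12)}=0$, and the vanishing of the aux terms.
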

\begin{proof}
 By the proof of Theorem 2.8 in \cite{odesskii2013double}, the double Jacobiator, which is the left-hand side of the double Jacobi identity for the double Poisson bracket in the sense of Van den Bergh, equals
 \begin{align}
    \Jac(a,b,c)=-(13)\operatorname{AYBE}_r(c,b,a),\hspace{20pt} a,b,c\in V,
 \end{align}
 hence the right double Jacobiator evaluated at $a,b,c\in V$ is $-(13)\operatorname{CYBE}_r(c,b,a)$ by the skew-symmetry of $r$.
\end{proof}

\begin{proposition}\label{prop5}
    Let $\RR,r:V\otimes V\longrightarrow V\otimes V$ be skew-symmetric solutions of AYBE and CYBE respectively. If they satisfy the following compatibility condition
    \begin{align}\label{f32}
            [\RR^{12},r^{13}+r^{23}]=0,
        \end{align}
    then $\br_{\id},\br_{(12)}:\operatorname{T}(V)\otimes \T(V)\longrightarrow \T(V)\otimes \T(V)$ defined by
    \begin{align}
        &\{\!\!\{a,b\}\!\!\}_{\id}=r(a,b),\\
        &\{\!\!\{a,b\}\!\!\}_{(12)}=\RR(a,b),
    \end{align}
    is a pair of coupled double Poisson brackets.
\end{proposition}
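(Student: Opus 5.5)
By the two preceding propositions (Schedler/Odesskii--Rubtsov--Sokolov and Fairon--McCulloch, Proposition 3.17), $\br_{(12)}$ is a double Poisson bracket on $\T(V)$ and $\br_{\id}$ is a right double Poisson bracket on $\T(V)$, both extended from $V$ by the outer and right Leibniz rules respectively. Since both brackets take values in $\T(V)\otimes\T(V)$, Remark~\ref{rem2}(3) applies: relations 1) and 3) of Definition~\ref{def16} reduce to the right double Jacobi identity and the double Jacobi identity, and these already hold. So the only remaining condition is the cross-Jacobi identity \eqref{f2}. By Corollary~\ref{cor1} it suffices to verify \eqref{f2} for $a,b,c\in V$. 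Since the aux terms vanish here, the left-hand side of \eqref{f2} is the element $\Jac^{12}(a,b,c)\in V^{\otimes 3}$.

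For $a,b,c\in V$ each of the four terms is computed directly. For example, $\{\!\!\{b,c\}\!\!\}_{\id}=r(b,c)=r'\otimes r''$ with $r',r''\in V$. Then
\[
\Big\{\!\!\!\Big\{a,\{\!\!\{b,c\}\!\!\}_{\id}\Big\}\!\!\!\Big\}_{(12)}^{\text{left}}=\RR(a,r')\otimes r''=\RR^{12}r^{23}(a\otimes b\otimes c)
\]
in the convention where $\RR^{ij}$ acts on slots $i,j$ of $V^{\otimes 3}$. The other three terms are handled the same way:
\begin{itemize}
\item $(13)\{\!\!\{b,\{\!\!\{c,a\}\!\!\}_{\id}\}\!\!\}^{\text{right}}_{(12)}$ has the form $(13)\bigl(r'\otimes \RR(b,r'')\bigr)$ with $r'\otimes r''=r(c,a)$;
\item $(132)\{\!\!\{c,\{\!\!\{a,b\}\!\!\}_{(12)}\}\!\!\}^{\text{left}}_{\id}$ and $(23)\{\!\!\{c,\{\!\!\{a,b\}\!\!\}_{(12)}\}\!\!\}^{\text{right}}_{\id}$ are obtained by applying $r$ to $c$ together with one of the two legs of $\RR(a,b)$.
\end{itemize}
Each of these is rewritten as a composite $X^{ij}Y^{kl}$ applied to a fixed ordering of $a\otimes b\otimes c$, using $\sigma T^{ij}\sigma^{-1}=T^{\sigma(i)\sigma(j)}$ and the skew-symmetries $r^{21}=-r^{12}$, $\RR^{21}=-\RR^{12}$ to move the permutations through.

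The expected outcome is
\[
\Jac^{12}(a,b,c)=\pm\,\sigma\,[\RR^{12},r^{13}+r^{23}](x_1\otimes x_2\otimes x_3)
\]
for some fixed permutation $\sigma\in S(3)$ and some reordering $(x_1,x_2,x_3)$ of $(a,b,c)$. The two terms whose inner bracket is $\br_{\id}$ should give $\RR^{12}r^{13}+\RR^{12}r^{23}$ up to relabeling, and the two terms whose inner bracket is $\br_{(12)}$ should give $-r^{13}\RR^{12}-r^{23}\RR^{12}$. Once this identity is established, \eqref{f32} forces $\Jac^{12}=0$ and the proposition follows. In fact the computation shows the converse as well, which is the equivalence promised in the remark after Remark~\ref{rem2}. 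Remark~\ref{rem1} can also be used: it relates $\Jac^{12}$ at permuted arguments to $\Jac^{13}$ and $\Jac^{23}$, so only this one cross-Jacobiator needs to be checked.

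The main obstacle is the bookkeeping in the third and fourth terms. There the inner double bracket $\RR(a,b)$ has legs in slots $1,2$, and the outer right double bracket with $c$ applies $r$ to the pair (leg, $c$), but with $c$ in the first argument. One therefore needs $\{\!\!\{c,x\}\!\!\}_{\id}=r(c,x)=-(12)r(x,c)$, and must track carefully how $(132)$ or $(23)$ then places the legs. I would first fix the reference ordering $c\otimes a\otimes b$ (or whichever makes $\RR$ act on slots $1,2$ in both $\br_{(12)}$-terms), compute each of the four terms explicitly in index notation $r=\sum r_{ij}^{kl}$, $\RR=\sum \RR_{ij}^{kl}$, and then match the resulting sums against the expansion of the commutator.
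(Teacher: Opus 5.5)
Your proposal follows the paper's proof: reduce via Remark~\ref{rem2}(3) and Corollary~\ref{cor1} to the cross-Jacobi identity \eqref{f2} on $V$, then rewrite the four terms as composites of $\RR$ and $r$ acting on legs of $V^{\otimes 3}$ using $\sigma T^{ij}\sigma^{-1}=T^{\sigma(i)\sigma(j)}$ and skew-symmetry. The paper carries this out with the reference ordering $a\otimes b\otimes c$ rather than $c\otimes a\otimes b$. The four terms come out as $\RR^{12}r^{23}$, $\RR^{21}r^{31}=\RR^{12}r^{13}$, $r^{31}\RR^{12}=-r^{13}\RR^{12}$ and $r^{32}\RR^{12}=-r^{23}\RR^{12}$, so your predicted identity holds with $\sigma=\id$ and sign $+$, which you would still need to write out.
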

\begin{proof}
It only remains to check that the compatibility condition above is equivalent to the condition \eqref{f2} from the third case of Remark \ref{rem2}.
\begin{align}
    \Big\{\!\!\!\Big\{a,\{\!\!\{b,c\}\!\!\}_{\id}\Big\}\!\!\!\Big\}_{(12)}^{\text{left}}&=\RR^{12}r^{23}(a,b,c),\\[10pt]
            (13)\Big\{\!\!\!\Big\{b,\{\!\!\{c,a\}\!\!\}_{\id}\Big\}\!\!\!\Big\}_{(12)}^{\text{right}}&=(13)\Big\{\!\!\!\Big\{b,r(c,a)\Big\}\!\!\!\Big\}_{(12)}^{\text{right}}=(13)\big[r'(c,a)\otimes \RR(b,r''(c,a))\big]\\[7pt]
            &=(13)(12)\big[\RR'(b,r''(c,a))\otimes r'(c,a)\otimes \RR''(b,r''(c,a))\big]=(123)\RR^{13}r^{23}(b,c,a)\\[7pt]
            &=(123)\RR^{13}r^{23}(132)(a,b,c)=\RR^{21}r^{31}(a,b,c)=\RR^{12}r^{13}(a,b,c)\\[10pt]
    (132)\Big\{\!\!\!\Big\{c,\{\!\!\{a,b\}\!\!\}_{(12)}\Big\}\!\!\!\Big\}_{\id}^{\text{left}}&=(132)\Big\{\!\!\!\Big\{c,\RR(a,b)\Big\}\!\!\!\Big\}_{\id}^{\text{left}}=(132)r^{12}\RR^{23}(123)(a,b,c)=r^{31}\RR^{12}(a,b,c)\\[7pt]
    &=-r^{13}\RR^{12}(a,b,c)\\[10pt]
            (23)\Big\{\!\!\!\Big\{c,\{\!\!\{a,b\}\!\!\}_{(12)}\Big\}\!\!\!\Big\}_{\id}^{\text{right}}&=(23)\Big\{\!\!\!\Big\{c,\RR(a,b)\Big\}\!\!\!\Big\}_{\id}^{\text{right}}=(23)\big[\RR'(a,b)\otimes r(c,\RR''(a,b))\big]=\\[7pt]
            &=(23)(12)\big[r'(c,\RR''(a,b))\otimes\RR'(a,b)\otimes  r''(c,\RR''(a,b))\big]\\[7pt]
            &=(132)r^{13}\RR^{23}(123)(a,b,c)=r^{32}\RR^{12}(a,b,c)=-r^{23}\RR^{12}(a,b,c)
        \end{align}
\end{proof}

\subsection{Examples}

\subsubsection{Quadratic coupled double Poisson brackets on \texorpdfstring{\(\kk^2\)}{k2}}

Consider \(\kk^2\) with basis \(e_1,e_2\) and denote by \(E_{ij}\) the matrix units. We will use the notation \(A\wedge B:=A\otimes B -B\otimes A\).

Set
\[
R_I \;=\; E_{11}\wedge E_{12},\qquad
R_{II} \;=\; E_{12}\wedge E_{22},\qquad 
r \;=\; (E_{11}+E_{22})\wedge E_{12}.
\]

One easily checks that \(R_I,R_{II}\) satisfy AYBE, \(r\) satisfies CYBE, and 
\[
[R_I^{12},\,r^{13}+r^{23}]=0, \qquad [R_{II}^{12},\,r^{13}+r^{23}]=0.
\]
Hence by Proposition \ref{prop5}, pairs \((r,R_I)\) and \((r,R_{II})\) define coupled double Poisson brackets on \(\kk\langle x,y\rangle\).

For \((r,R_I)\) the coupled double Poisson brackets are given by
\begin{align*}
&\{\!\!\{x,x\}\!\!\}_{\id}^{I}=0,
&&\{\!\!\{x,y\}\!\!\}_{\id}^{I}=x\otimes x,
&&\{\!\!\{y,y\}\!\!\}_{\id}^{I}=y\otimes x-x\otimes y,\\
&\{\!\!\{x,x\}\!\!\}_{(12)}^{I}=0,
&&\{\!\!\{x,y\}\!\!\}_{(12)}^{I}=x\otimes x,
&&\{\!\!\{y,y\}\!\!\}_{(12)}^{I}=0,
\end{align*}
and for \((r,R_{II})\) by
\begin{align*}
&\{\!\!\{x,x\}\!\!\}_{\id}^{II}=0,
&&\{\!\!\{x,y\}\!\!\}_{\id}^{II}=x\otimes x,
&&\{\!\!\{y,y\}\!\!\}_{\id}^{II}=y\otimes x-x\otimes y,\\
&\{\!\!\{x,x\}\!\!\}_{(12)}^{II}=0,
&&\{\!\!\{x,y\}\!\!\}_{(12)}^{II}=0,
&&\{\!\!\{y,y\}\!\!\}_{(12)}^{II}=x\otimes y-y\otimes x.
\end{align*}

\subsubsection{Quadratic coupled double Poisson brackets on \texorpdfstring{\(\kk^3\)}{k3}}

Consider \(\kk^3\) and the following elements of \(\bigwedge^2\operatorname{Mat}_3(\kk)\)
\begin{align}
R_1 &= E_{32}\wedge E_{31},\\
R_2 &= E_{11}\wedge E_{12}+E_{13}\wedge E_{12}+E_{11}\wedge E_{23}+E_{21}\wedge E_{13}+E_{22}\wedge E_{23},\\
R_3 &= E_{13}\wedge E_{12}+E_{11}\wedge E_{23}+E_{21}\wedge E_{13}+E_{22}\wedge E_{23},\\
R_4 &= E_{22}\wedge E_{23},\\
R_5 &= E_{11}\wedge E_{23}+E_{21}\wedge E_{13}+E_{22}\wedge E_{23},\\
R_6 &= E_{11}\wedge E_{13}+E_{11}\wedge E_{23}+E_{33}\wedge E_{23},\\
R_7 &= E_{13}\wedge E_{21}+E_{33}\wedge E_{23},\\
R_8 &= E_{11}\wedge E_{23}+E_{33}\wedge E_{23},
\end{align}
and their transposed-duals
\[
R_{8+i}=T^{\otimes 2}(R_i),\qquad i=1,\dots,8,
\]
where \(T\) is the transposition \(T(E_{ab})=E_{ba}\). 

\begin{theorem}[\cite{sokolov2013classification}]
    The matrices \(R_1,\ldots,R_{16}\in \bigwedge^2\operatorname{Mat}_3(\kk)\) and their scalar multiples are the only skew-symmetric solutions of AYBE on \(\kk^3\) modulo the conjugation action of \(\operatorname{GL}_3(\kk)\).
\end{theorem}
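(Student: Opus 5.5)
The plan is to turn the classification into a finite, though lengthy, algebraic case analysis. The key tool is a reformulation of \eqref{AYBE} for skew-symmetric $\RR$ as an associative-algebra condition, so that the $\operatorname{GL}_3(\kk)$-orbits can be organised by invariants.

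First, identify $\RR\in\bigwedge^2\operatorname{Mat}_3(\kk)\subset\operatorname{End}(V)^{\otimes 2}$ with the operator $\widehat{\RR}\colon \operatorname{Mat}_3(\kk)\to\operatorname{Mat}_3(\kk)$ obtained by contracting one tensor factor against the trace form $\operatorname{tr}(XY)$. Skew-symmetry $\RR^{12}=-\RR^{21}$ becomes skew-adjointness of $\widehat{\RR}$ with respect to this form. Expanding \eqref{AYBE} in components, I expect it to become the Rota--Baxter-type identity
\begin{equation*}
\widehat{\RR}(X)\widehat{\RR}(Y)=\widehat{\RR}\bigl(\widehat{\RR}(X)Y+X\widehat{\RR}(Y)\bigr)
\end{equation*}
of weight zero, up to sign conventions that I would fix by a direct check. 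In this form:
\begin{itemize}
\item the image $B:=\operatorname{Im}\widehat{\RR}$ is an associative subalgebra of $\operatorname{Mat}_3(\kk)$;
\item skew-adjointness forces $B^{\perp}=\ker\widehat{\RR}$;
\item $\widehat{\RR}$ restricts to a nondegenerate skew form on $B^{*}$, which amounts to a $2$-cocycle-type condition. In other words, $B$ carries an anti-Frobenius structure.
\end{itemize}
Conjugation by $\operatorname{GL}_3(\kk)$ acts compatibly on all of these data.

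Second, stratify by $\dim B$, which is even because the skew form is nondegenerate. Within each stratum, stratify further by the isomorphism type of $B$ together with its embedding in $\operatorname{Mat}_3(\kk)$ up to conjugation. The candidates are:
\begin{itemize}
\item $\dim B=2$: the classification of $2$-dimensional (possibly non-unital) subalgebras of $\operatorname{Mat}_3$ is short.
\item $\dim B=4$ and $\dim B=6$: subalgebras of these dimensions stabilise a flag or a partial flag, since a proper subalgebra of $\operatorname{Mat}_3$ has a nontrivial invariant subspace by Burnside's theorem. This reduces them to subalgebras of parabolic subalgebras, which can be listed.
\item $\dim B=8$: excluded, since $8$-dimensional subalgebras are parabolic and do not admit a compatible nondegenerate skew cocycle. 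I would check this directly.
\end{itemize}
For each $B$, I would then solve the remaining linear cocycle condition for the skew form and quotient by the stabiliser of $B$ in $\operatorname{GL}_3(\kk)$, rescaling to reach the representatives $R_1,\dots,R_8$. The transposition $T$ corresponds to passing to the opposite algebra composed with the inverse transpose. This symmetry pairs each $R_i$ with $R_{8+i}$ and halves the work.

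The main obstacle is the completeness of the case analysis in the middle stratum $\dim B=4$, where continuous families of embeddings and cocycles occur. There one must show that every solution is conjugate to a scalar multiple of one of the listed ones, with no extra moduli. For this I would first normalise the flag stabilised by $B$, using upper-triangular forms. I would then write the general $\RR$ supported in the corresponding blocks and reduce the quadratic equations of \eqref{AYBE} by a Gröbner-basis computation, separating the cases by the rank of the nilpotent radical of $B$. Finally, I would separate the resulting orbits by invariants such as $\dim\ker\widehat{\RR}$, $\dim B^{2}$ and whether $B$ is unital, to confirm that no two of $R_1,\dots,R_{16}$ are conjugate.
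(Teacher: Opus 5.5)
The paper gives no argument for this statement; it is quoted from Sokolov, so your outline has to stand on its own.

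Your reduction step is correct and needs no sign adjustment. Write $\RR=\sum_i u_i\otimes v_i$ and $\widehat{\RR}(X)=\sum_i u_i\operatorname{tr}(v_iX)$. Contract legs $2$ and $3$ of AYBE against $\operatorname{tr}(\,\cdot\,X)$ and $\operatorname{tr}(\,\cdot\,Y)$, using $\RR^{12}=-\RR^{21}$ only in the term $\RR^{13}\RR^{23}$. This gives exactly $\widehat{\RR}(X)\widehat{\RR}(Y)=\widehat{\RR}\bigl(\widehat{\RR}(X)Y+X\widehat{\RR}(Y)\bigr)$. Then $\omega(\widehat{\RR}X,\widehat{\RR}Y):=\operatorname{tr}(X\widehat{\RR}Y)$ is a nondegenerate skew form on $B=\operatorname{Im}\widehat{\RR}$ with $\omega(ab,c)+\omega(bc,a)+\omega(ca,b)=0$. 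Conversely, every such pair $(B,\omega)$ returns a solution $\RR=\omega^{-1}$.

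Two of your auxiliary claims are wrong, though neither affects the conclusion.
\begin{itemize}
\item There are no $8$-dimensional subalgebras of $\operatorname{Mat}_3(\kk)$ at all. A proper subalgebra containing $1$ lies in a parabolic, which is $7$-dimensional. If $1\notin B$, then $B\oplus\kk 1$ is a subalgebra that cannot be all of $\operatorname{Mat}_3(\kk)$, since $B$ would then be a codimension-one ideal of a simple algebra.
\item ``Whether $B$ is unital'' separates nothing, because a nonzero anti-Frobenius algebra never has a unit. Putting $b=c=e$ in the cocycle identity gives $\omega(a,e)=0$ for all $a$.
\end{itemize}

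The real gap is that the proposal stops where the theorem starts. The statement is an exhaustiveness claim. Phrases like ``can be listed'', ``solve the remaining linear cocycle condition'' and ``a Gr\"obner-basis computation'' stand in for the whole case analysis, none of which is carried out. That analysis is: the admissible $B$ up to conjugacy in dimensions $2,4,6$, and the stabiliser orbits (up to scaling) of cyclic cocycles on each. So nothing yet excludes a missing orbit. Pairwise non-conjugacy of $R_1,\dots,R_{16}$, by contrast, is not asserted and need not be checked.

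The non-unitality noted above is the lever that makes the enumeration tractable. Since $1\notin B$, $B$ is the kernel of a character of the unital subalgebra $B\oplus\kk 1$, of dimension $3$, $5$ or $7$.
\begin{itemize}
\item For $\dim B=6$, only the two codimension-one ideals of the parabolics remain: matrices with image in a fixed plane, or matrices killing a fixed line. These are the subalgebras underlying $R_2$ and $R_{10}$.
\item For $\dim B=2$, taking $a=b=c$ gives $a^2\in\kk a$ for all $a$. This leaves the zero algebra and the algebras with $e^2=e$, $n^2=0$, $en=n$, $ne=0$ (or $ne=n$, $en=0$). Their embeddings produce exactly $R_1,R_4,R_8$ and their transposes.
\end{itemize}
What a proof must actually contain is the list of $5$-dimensional unital subalgebras with a character. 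For these, and for the two $6$-dimensional subalgebras, it must also classify the cyclic cocycles modulo the stabiliser and scaling.
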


The list \textbf{A,B,C,D,E} below contains skew-symmetric solutions \(R\) of AYBE and skew-symmetric solutions \(r\) of CYBE which are compatible in the sense that \([R^{12},\,r^{13}+r^{23}]=0\). The corresponding quadratic coupled double Poisson brackets on \(\kk\langle x,y,z\rangle\) are also presented. Conjecturally, this list is exhaustive.

\begin{conjecture}
    The pairs of matrices \((R,r)\) listed in \textbf{A,B,C,D,E} below together with their transposed duals \((T^{\otimes 2}(R),T^{\otimes 2}(r))\) and scalar multiples thereof are the only skew-symmetric solutions of AYBE and CYBE on \(\kk^3\) which are compatible in the sense that \([R^{12},\,r^{13}+r^{23}]=0\) modulo the conjugation action of \(\operatorname{GL}_3(\kk)\).
\end{conjecture}

\textbf{Case A.}
\begin{align}
R_A&=R_1=E_{32}\wedge E_{31},\\[7pt]
r_{A.1}
&=a_1\!\left(-2\,E_{11}\wedge E_{22}-E_{11}\wedge E_{33}+E_{22}\wedge E_{33}\right)\\
&\quad +(E_{11}+E_{22}+E_{33})\wedge\left(a_2E_{12}+a_3E_{21}+a_4E_{31}+a_5E_{32}\right)
+a_6\,E_{31}\wedge E_{32},\\
r_{A.2}&=\bigl(a_7(E_{11}+E_{22}+E_{33})+a_8(E_{11}-E_{22})+a_9E_{32}+a_{10}E_{21}\bigr)\wedge E_{31},\\
r_{A.3}&=
(E_{11}+E_{22}+E_{33})\wedge (a_{11}E_{12}+a_{12}E_{32}+a_{13}E_{31})
+(a_{14}\,E_{12}+a_{15}\,E_{31})\wedge E_{32},\\
r_{A.4}
&=
a_{16}\Bigl((E_{11}-E_{22})\wedge\Bigl(4E_{12}+E_{21}\Bigr)
+4E_{12}\wedge E_{21}\Bigr)\\
&\quad
+a_{17}\,(E_{11}+E_{22}+E_{33})\wedge\Bigl(-2(E_{11}-E_{22})-4E_{12}+E_{21}\Bigr)
+a_{18}\,E_{31}\wedge E_{32},\\
r_{A.5}
&=
a_{19}(E_{11}+E_{22}+E_{33})\wedge(E_{11}-E_{22}-E_{12}+E_{21}+E_{31}-E_{32})
\\
&\quad
+a_{20}\,(E_{11}-E_{22}-E_{12}+E_{21})\wedge(E_{31}-E_{32})
+a_{21}\,E_{31}\wedge E_{32},\\
r_{A.6}
&=
a_{22}\,(E_{11}+E_{22}+E_{33})\wedge E_{31}
+a_{23}\Bigl((E_{11}-E_{22})\wedge E_{31}+E_{21}\wedge E_{32}\Bigr),
\end{align}
where \(a_1,\ldots,a_{23}\in\kk\).

The corresponding \(6\) pairs of coupled double Poisson brackets on \(\kk\langle x,y,z\rangle\)
\[
\Bigl(\br_{\id}^{A.1},\br_{(12)}^{A}\Bigr),\qquad
\Bigl(\br_{\id}^{A.2},\br_{(12)}^{A}\Bigr),
\]
\[
\Bigl(\br_{\id}^{A.3},\br_{(12)}^{A}\Bigr),\qquad
\Bigl(\br_{\id}^{A.4},\br_{(12)}^{A}\Bigr),
\]
\[
\Bigl(\br_{\id}^{A.5},\br_{(12)}^{A}\Bigr),\qquad
\Bigl(\br_{\id}^{A.6},\br_{(12)}^{A}\Bigr)
\]
are given by (only nonzero values are displayed)
\begin{align*}
\{\!\!\{x,y\}\!\!\}_{(12)}^{A} &= - z\otimes z
\end{align*}

\begin{align*}
\{\!\!\{x,x\}\!\!\}_{\id}^{A.1}
&= a_{3}\,(x\otimes y-y\otimes x)+a_{4}\,(x\otimes z-z\otimes x),\\
\{\!\!\{x,y\}\!\!\}_{\id}^{A.1}
&= a_{2}\,x\otimes x-2a_{1}\,x\otimes y+a_{5}\,x\otimes z-a_{3}\,y\otimes y-a_{4}\,z\otimes y+a_{6}\,z\otimes z,\\
\{\!\!\{x,z\}\!\!\}_{\id}^{A.1}
&= -a_{1}\,x\otimes z-a_{3}\,y\otimes z-a_{4}\,z\otimes z,\\
\{\!\!\{y,y\}\!\!\}_{\id}^{A.1}
&= a_{2}\,(y\otimes x-x\otimes y)+a_{5}\,(y\otimes z-z\otimes y),\\
\{\!\!\{y,z\}\!\!\}_{\id}^{A.1}
&= -a_{2}\,x\otimes z+a_{1}\,y\otimes z-a_{5}\,z\otimes z.
\end{align*}

\begin{align*}
\{\!\!\{x,x\}\!\!\}_{\id}^{A.2}
&= (a_{7}+a_{8})\,(x\otimes z-z\otimes x)
+a_{10}\,(y\otimes z-z\otimes y),\\
\{\!\!\{x,y\}\!\!\}_{\id}^{A.2}
&= -(a_{7}-a_{8})\,z\otimes y-a_{9}\,z\otimes z,\\
\{\!\!\{x,z\}\!\!\}_{\id}^{A.2}
&= -a_{7}\,z\otimes z.
\end{align*}

\begin{align*}
\{\!\!\{x,x\}\!\!\}_{\id}^{A.3}
&= a_{13}\,(x\otimes z - z\otimes x),\\
\{\!\!\{x,y\}\!\!\}_{\id}^{A.3}
&= a_{11}\,x\otimes x + a_{12}\,x\otimes z - a_{13}\,z\otimes y + a_{15}\,z\otimes z,\\
\{\!\!\{x,z\}\!\!\}_{\id}^{A.3}
&= -a_{13}\,z\otimes z,\\
\{\!\!\{y,y\}\!\!\}_{\id}^{A.3}
&= a_{11}\,(y\otimes x - x\otimes y)
+a_{14}\,(x\otimes z - z\otimes x)
+a_{12}\,(y\otimes z - z\otimes y),\\
\{\!\!\{y,z\}\!\!\}_{\id}^{A.3}
&= -a_{11}\,x\otimes z - a_{12}\,z\otimes z.
\end{align*}

\begin{align*}
\{\!\!\{x,x\}\!\!\}_{\id}^{A.4}
&= \left(a_{16}+a_{17}\right)\,(x\otimes y-y\otimes x),\\[4pt]
\{\!\!\{x,y\}\!\!\}_{\id}^{A.4}
&= 4(a_{16}-a_{17})\,x\otimes x
+4a_{17}\,x\otimes y
-4a_{16}\,y\otimes x
+\left(a_{16}-a_{17}\right)\,y\otimes y
+a_{18}\,z\otimes z,\\
\{\!\!\{x,z\}\!\!\}_{\id}^{A.4}
&= a_{17}\,(2x-y)\otimes z,\\
\{\!\!\{y,y\}\!\!\}_{\id}^{A.4}
&= 4(a_{16}+a_{17})\,(x\otimes y-y\otimes x),\\
\{\!\!\{y,z\}\!\!\}_{\id}^{A.4}
&= 2a_{17}\,(2x-y)\otimes z.
\end{align*}

\begin{align*}
\{\!\!\{x,x\}\!\!\}_{\id}^{A.5}
&= a_{19}\,(x\otimes y-y\otimes x)
+(a_{19}+a_{20})\,(x\otimes z-z\otimes x)
+a_{20}\,(y\otimes z-z\otimes y),\\
\{\!\!\{x,y\}\!\!\}_{\id}^{A.5}
&= -a_{19}\,x\otimes x-a_{19}\,y\otimes y+a_{21}\,z\otimes z
-2a_{19}\,x\otimes y-(a_{19}+a_{20})\,x\otimes z+a_{20}\,z\otimes x\\
&\quad -a_{20}\,y\otimes z-(a_{19}-a_{20})\,z\otimes y,\\
\{\!\!\{x,z\}\!\!\}_{\id}^{A.5}
&= -a_{19}\,(x\otimes z+y\otimes z+z\otimes z),\\
\{\!\!\{y,y\}\!\!\}_{\id}^{A.5}
&= a_{19}\,(x\otimes y-y\otimes x)
+a_{20}\,(x\otimes z-z\otimes x)
-(a_{19}-a_{20})\,(y\otimes z-z\otimes y),\\
\{\!\!\{y,z\}\!\!\}_{\id}^{A.5}
&= a_{19}\,(x\otimes z+y\otimes z+z\otimes z).
\end{align*}

\begin{align*}
\{\!\!\{x,x\}\!\!\}_{\id}^{A.6}
&= (a_{22}+a_{23})\,(x\otimes z - z\otimes x),\\
\{\!\!\{x,y\}\!\!\}_{\id}^{A.6}
&= a_{23}\,y\otimes z -(a_{22}-a_{23})\,z\otimes y,\\
\{\!\!\{x,z\}\!\!\}_{\id}^{A.6}
&= -a_{22}\,z\otimes z.
\end{align*}

\textbf{Case B.}
\begin{align}
R_B&=R_2=
E_{11}\wedge E_{12}
+E_{13}\wedge E_{12}
+E_{11}\wedge E_{23}
+E_{21}\wedge E_{13}
+E_{22}\wedge E_{23},\\[7pt]
r_B
&=
E_{11}\wedge E_{12}
+2E_{11}\wedge E_{23}
-E_{12}\wedge (E_{22}+E_{33})
+2E_{22}\wedge E_{23}
-2E_{23}\wedge E_{33}.
\end{align}

The corresponding pair \(\Bigl(\br_{\id}^{B},\br_{(12)}^{B}\Bigr)\) of coupled double Poisson brackets on \(\kk\langle x,y,z\rangle\) is given by (only nonzero values are displayed)
\begin{align}
\{\!\!\{x,y\}\!\!\}_{(12)}^{B}&= x\otimes x, &
\{\!\!\{x,z\}\!\!\}_{(12)}^{B}&= x\otimes y + y\otimes x, &
\{\!\!\{y,z\}\!\!\}_{(12)}^{B}&= -\,x\otimes x + y\otimes y,
\end{align}

\begin{align}
\{\!\!\{x,y\}\!\!\}_{\id}^{B} &= x\otimes x, &
\{\!\!\{x,z\}\!\!\}_{\id}^{B} &= 2\,x\otimes y, &
\{\!\!\{y,y\}\!\!\}_{\id}^{B} &= -\,x\otimes y + y\otimes x,\\
\{\!\!\{y,z\}\!\!\}_{\id}^{B} &= -\,x\otimes z + 2\,y\otimes y, &
\{\!\!\{z,z\}\!\!\}_{\id}^{B} &= -\,2\,y\otimes z + 2\,z\otimes y.
\end{align}

\textbf{Case C.}
\begin{align}
R_C
&=
E_{13}\wedge E_{12}
+E_{11}\wedge E_{23}
+E_{21}\wedge E_{13}
+E_{22}\wedge E_{23},\\[7pt]
r_C
&=
c_1\bigl(E_{11}\wedge E_{13}-E_{13}\wedge E_{22}-E_{13}\wedge E_{33}\bigr)\nonumber\\
&\qquad
+c_2\bigl(E_{11}\wedge E_{23}+E_{22}\wedge E_{23}-E_{23}\wedge E_{33}\bigr)
+c_3\,E_{13}\wedge E_{23},
\end{align}
where \(c_1,c_2,c_3\in\kk\).

The corresponding pair \(\Bigl(\br_{\id}^{C},\br_{(12)}^{C}\Bigr)\) of coupled double Poisson brackets on \(\kk\langle x,y,z\rangle\) is given by (only nonzero values are displayed)
\begin{align}
\{\!\!\{x,z\}\!\!\}_{(12)}^{C} &= x\otimes y + y\otimes x,
\phantom{\{\!\!\{y,z\}\!\!\}_{\id}^{C}}
\{\!\!\{y,z\}\!\!\}_{(12)}^{C} = -\,x\otimes x + y\otimes y,
\end{align}

\begin{align}
\{\!\!\{x,z\}\!\!\}_{\id}^{C} &= c_1\,x\otimes x + c_2\,x\otimes y,
\qquad
\{\!\!\{y,z\}\!\!\}_{\id}^{C} = c_1\,y\otimes x + c_2\,y\otimes y,\\[6pt]
\{\!\!\{z,z\}\!\!\}_{\id}^{C}
&= c_1\bigl(-\,x\otimes z + z\otimes x\bigr)
 +c_2\bigl(-\,y\otimes z + z\otimes y\bigr)
 +c_3\bigl(x\otimes y - y\otimes x\bigr).
\end{align}

\textbf{Case D.}
\begin{align}
R_{D.1}&=R_4=E_{22}\wedge E_{23},\\
R_{D.2}&=R_5=E_{11}\wedge E_{23}+E_{21}\wedge E_{13}+E_{22}\wedge E_{23},\\
R_{D.3}&=R_7=
E_{13}\wedge E_{21}+E_{33}\wedge E_{23},\\
R_{D.4}&=R_8=(E_{11}+E_{33})\wedge E_{23},\\[7pt]
r_{D.1}&=
d_1\bigl(E_{11}\wedge E_{22}+E_{11}\wedge E_{33}\bigr)
+d_2\,E_{11}\wedge E_{23}
+d_3\bigl(E_{22}\wedge E_{23}-E_{23}\wedge E_{33}\bigr),\\
r_{D.2}&=
d_4\bigl(E_{11}\wedge E_{23}+E_{22}\wedge E_{23}-E_{23}\wedge E_{33}\bigr)+d_5\,E_{11}\wedge E_{13}\\
&\hspace{60pt}
+d_6\bigl(E_{13}\wedge E_{22}+E_{13}\wedge E_{33}\bigr)
+d_7\,E_{13}\wedge E_{23},
\end{align}
where \(d_1,\ldots,d_7\in\kk\).

The corresponding \(8\) pairs of coupled double Poisson brackets on \(\kk\langle x,y,z\rangle\)
\begin{align}
\Bigl(\br_{\id}^{D.1},\br_{(12)}^{D.1}\Bigr),\qquad
\Bigl(\br_{\id}^{D.1},\br_{(12)}^{D.2}\Bigr),\nonumber\\
\Bigl(\br_{\id}^{D.1},\br_{(12)}^{D.3}\Bigr),\qquad
\Bigl(\br_{\id}^{D.1},\br_{(12)}^{D.4}\Bigr).
\end{align}

\begin{align}
\Bigl(\br_{\id}^{D.2},\br_{(12)}^{D.1}\Bigr),\qquad
\Bigl(\br_{\id}^{D.2},\br_{(12)}^{D.2}\Bigr),\nonumber\\
\Bigl(\br_{\id}^{D.2},\br_{(12)}^{D.3}\Bigr),\qquad
\Bigl(\br_{\id}^{D.2},\br_{(12)}^{D.4}\Bigr).
\end{align}
are given by (only nonzero values are displayed)
\begin{align}
\{\!\!\{y,z\}\!\!\}_{(12)}^{D.1}&=y\otimes y.
\end{align}

\begin{align}
\{\!\!\{x,z\}\!\!\}_{(12)}^{D.2}&=x\otimes y+y\otimes x,\qquad \{\!\!\{y,z\}\!\!\}_{(12)}^{D.2}=y\otimes y.
\end{align}

\begin{align}
\{\!\!\{x,z\}\!\!\}_{(12)}^{D.3}&=-\,y\otimes x,\qquad \{\!\!\{z,z\}\!\!\}_{(12)}^{D.3}=z\otimes y-y\otimes z.
\end{align}

\begin{align}
\{\!\!\{x,z\}\!\!\}_{(12)}^{D.4}&=x\otimes y,\qquad \{\!\!\{z,z\}\!\!\}_{(12)}^{D.4}=z\otimes y-y\otimes z.
\end{align}

\begin{align}
\{\!\!\{x,y\}\!\!\}_{\id}^{D.1}&=d_1\,x\otimes y,\qquad \{\!\!\{x,z\}\!\!\}_{\id}^{D.1}=d_1\,x\otimes z+d_2\,x\otimes y,\\
\{\!\!\{y,z\}\!\!\}_{\id}^{D.1}&=d_3\,y\otimes y,\qquad \{\!\!\{z,z\}\!\!\}_{\id}^{D.1}=d_3\,(z\otimes y-y\otimes z).
\end{align}

\begin{align}
\{\!\!\{x,z\}\!\!\}_{\id}^{D.2}&=d_5\,x\otimes x+d_4\,x\otimes y,\qquad 
\{\!\!\{y,z\}\!\!\}_{\id}^{D.2}=d_4\,y\otimes y-d_6\,y\otimes x,\\
\{\!\!\{z,z\}\!\!\}_{\id}^{D.2}&=d_4\,(z\otimes y-y\otimes z)+d_6\,(x\otimes z-z\otimes x)+d_7\,(x\otimes y-y\otimes x).
\end{align}

\textbf{Case E.}
\begin{align}
R_E&=R_6=
E_{11}\wedge E_{13}+E_{11}\wedge E_{23}+E_{33}\wedge E_{23},\\[7pt]
r_E
&=
e_1\bigl(E_{11}\wedge E_{13}-E_{13}\wedge E_{22}-E_{13}\wedge E_{33}\bigr)\nonumber\\
&\hspace{100pt}
+e_2\bigl(E_{11}\wedge E_{23}+E_{22}\wedge E_{23}-E_{23}\wedge E_{33}\bigr)
+e_3\,E_{13}\wedge E_{23},
\end{align}
where \(e_1,e_2,e_3\in\kk\).

The corresponding pair \(\Bigl(\br_{\id}^{E},\br_{(12)}^{E}\Bigr)\) of coupled double Poisson brackets on \(\kk\langle x,y,z\rangle\) is given by (only nonzero values are displayed)
\begin{align*}
\{\!\!\{x,z\}\!\!\}_{(12)}^{E}
&=x\otimes x+x\otimes y,\\
\{\!\!\{z,z\}\!\!\}_{(12)}^{E}
&=z\otimes y-y\otimes z,
\end{align*}

\begin{align*}
\{\!\!\{x,z\}\!\!\}_{\id}^{E}
&=e_1\,x\otimes x+e_2\,x\otimes y,\\
\{\!\!\{y,z\}\!\!\}_{\id}^{E}
&=e_1\,y\otimes x+e_2\,y\otimes y,\\
\{\!\!\{z,z\}\!\!\}_{\id}^{E}
&=-e_1\,x\otimes z+e_1\,z\otimes x-e_2\,y\otimes z+e_2\,z\otimes y+e_3\,x\otimes y-e_3\,y\otimes x.
\end{align*}

\subsubsection{A multiparameter family on \texorpdfstring{\(\kk^n\)}{kn}}

Let \(\lambda_1,\ldots,\lambda_n\in \kk\) be pairwise distinct elements: \(\lambda_i\neq\lambda_j\) with \(i\neq j\). 

Define a double bracket \(\br_{(12)}:\kk\langle x_1,\ldots,x_n\rangle^{\otimes 2}\longrightarrow \kk\langle x_1,\ldots,x_n\rangle^{\otimes 2}\) by
\[
\{\!\!\{x_i,x_j\}\!\!\}_{(12)}
:=
\begin{cases}
\dfrac{1}{\lambda_i-\lambda_j}
\bigl(x_i\otimes x_j+x_j\otimes x_i-x_i\otimes x_i-x_j\otimes x_j\bigr),
& i\neq j,\\[1.2ex]
0, & i=j.
\end{cases}
\]
and a right double bracket \(\br_{\id}:\kk\langle x_1,\ldots,x_n\rangle^{\otimes 2}\longrightarrow \kk\langle x_1,\ldots,x_n\rangle^{\otimes 2}\) by
\[
\{\!\!\{x_i,x_i\}\!\!\}_{\id}
=\sum_{k\neq i}\frac{1}{\lambda_i-\lambda_k}\,\bigl(x_k\otimes x_i-x_i\otimes x_k\bigr)\qquad \text{for}\ i=1,\ldots,n;
\]
and by
\[
\{\!\!\{x_i,x_j\}\!\!\}_{\id}
=\sum_{k\neq i}\frac{1}{\lambda_i-\lambda_k}\,x_k\otimes x_j
-\sum_{k\neq j}\frac{1}{\lambda_j-\lambda_k}\,x_i\otimes x_k
+\left(\sum_{k\neq j}\frac{1}{\lambda_j-\lambda_k}-\sum_{k\neq i}\frac{1}{\lambda_i-\lambda_k}\right)x_i\otimes x_j
\]
for \(i\neq j\).

It is known that \(\br_{(12)}\) is a double Poisson bracket, see Example 1 in Section 2 in \cite{odesskii2013double}. 

\begin{proposition}
    The pair \(\Bigl(\br_{\id},\br_{(12)}\Bigr)\) is a pair of coupled double Poisson brackets.
\end{proposition}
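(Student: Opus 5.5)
The plan is to reduce everything to Proposition~\ref{prop5}. Both brackets are quadratic, so they are given by operators on $V=\kk^n$ with basis $e_1,\ldots,e_n$ (identified with $x_1,\ldots,x_n$):
\begin{align}
\RR(e_i\otimes e_j)=\{\!\!\{x_i,x_j\}\!\!\}_{(12)},\qquad r(e_i\otimes e_j)=\{\!\!\{x_i,x_j\}\!\!\}_{\id}.
\end{align}
It then suffices to check four things: $\RR$ is a skew-symmetric solution of AYBE, $r$ is skew-symmetric, $r$ satisfies CYBE, and the compatibility $[\RR^{12},r^{13}+r^{23}]=0$ holds. The first is the cited fact that $\br_{(12)}$ is a double Poisson bracket (Example 1 of \cite{odesskii2013double}), combined with Schedler's equivalence. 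Skew-symmetry of $r$, i.e.\ $\{\!\!\{x_j,x_i\}\!\!\}_{\id}=-(12)\{\!\!\{x_i,x_j\}\!\!\}_{\id}$, can be read off the formulas directly.

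To organize the computation, set $\mu_i:=\sum_{k\neq i}\frac{1}{\lambda_i-\lambda_k}$ and $\rho_{ik}:=\frac{1}{\lambda_i-\lambda_k}$ for $i\ne k$. I would first try to write $r$ in a structured form. In matrix-unit notation, $r$ should be $\sum_{i\neq k}\rho_{ik}\,(E_{ki}\wedge \mathbf 1)$ corrected by a diagonal term of the form $\sum_{i,j}(\mu_j-\mu_i)E_{ii}\otimes E_{jj}$, where $\mathbf 1=\sum_p E_{pp}$. In fact $r$ looks like $D\wedge\mathbf 1+\text{(diagonal)}$ with $D=\sum_{i\ne k}\rho_{ik}E_{ki}$. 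Any element of the form $X\wedge \mathbf 1$ commutes appropriately, because $\mathbf 1^{ab}$ acts as the identity on two factors. This should make CYBE reduce to a small identity among $D$ and the diagonal part, which should in turn follow from the partial-fraction identity $\rho_{ij}\rho_{jk}+\rho_{jk}\rho_{ki}+\rho_{ki}\rho_{ij}=0$ for distinct $i,j,k$.

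Similarly, I would write $\RR=\sum_{i\neq j}\rho_{ij}(E_{ii}-E_{ji})\otimes(E_{jj}-E_{ij})$ or an equivalent expression, and check it against the given formula. In the compatibility condition $[\RR^{12},r^{13}+r^{23}]$, the $\mathbf 1$-parts of $r$ contribute terms of the form $[\RR^{12},D^{1}+D^{2}]$ and $[\RR^{12},D^{3}]=0$. So the condition becomes a statement that $\RR$ is invariant under the "derivation" $D^1+D^2$ up to the diagonal correction, and this can be verified entrywise on $e_i\otimes e_j\otimes e_k$.

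Concretely, I would evaluate each identity on basis vectors $e_i\otimes e_j\otimes e_k$ and split into cases according to the coincidences among $i,j,k$: all equal, exactly two equal (three subcases by position), and all distinct. By the $S(n)$-relabeling symmetry of the formulas, each case can be treated for representative indices. In the all-distinct case, the coefficients are rational functions of $\lambda_i,\lambda_j,\lambda_k$ plus sums over $k'\notin\{i,j,k\}$. The sums should cancel termwise by the three-term identity above, applied to triples $(i,j,k')$ etc.

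The main obstacle I expect is CYBE for $r$, and in particular the bookkeeping of the sums $\mu_i$ that run over all other indices. These produce terms involving a fourth index $k'$ which must cancel via the three-term partial-fraction identity. As a first step I would verify everything for $n=2$ and $n=3$ by direct computation to fix signs. A fallback is to observe that $r$ may be a gauge transform, by the diagonal twist, of a known rational solution, and to deduce CYBE from that.
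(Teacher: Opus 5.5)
Your plan is correct and follows the paper's route (reduce to Proposition~\ref{prop5}, then verify CYBE and $[\RR^{12},r^{13}+r^{23}]=0$ directly), but it puts the difficulty in the wrong place. Your diagonal correction $\sum_{i,j}(\mu_j-\mu_i)E_{ii}\otimes E_{jj}$ is itself $M\wedge\mathbf 1$ with $M=-\sum_i\mu_iE_{ii}$, so $r=X\otimes 1-1\otimes X$ with $X=D+M$; then $r^{ab}=X_a-X_b$, operators on different legs commute, and CYBE holds with no partial fractions at all. The real work is the compatibility: since $[\RR^{12},X_3]=0$, it reduces to $[\RR,X\otimes 1+1\otimes X]=0$ on $V\otimes V$, which the paper checks on $e_i\otimes e_j$ in just the two cases $i=j$ and $i\neq j$, using that the coefficients of $Xe_i$ sum to zero and your three-term partial-fraction identity.
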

\begin{proof}
    
Let \(V=\kk^n\) with basis \(\{e_i\}_{i=1}^n\). Define \(R:V\otimes V\longrightarrow V\otimes V\) by
\[
R(e_i\otimes e_j)=
\begin{cases}
\dfrac{1}{\lambda_i-\lambda_j}
\bigl(e_i\otimes e_j+e_j\otimes e_i-e_i\otimes e_i-e_j\otimes e_j\bigr), & i\neq j,\\[1ex]
0, & i=j
\end{cases}
\]
and \(X\in \operatorname{End}(V)\) by
\[
X_{ij}=
\begin{cases}
\dfrac{1}{\lambda_i-\lambda_j}, & i\neq j,\\[1ex]
-\displaystyle\sum_{k\neq i}\frac{1}{\lambda_i-\lambda_k}, & i=j.
\end{cases}
\]
Equivalently,
\[
Xe_i=\sum_{k\neq i}\frac{1}{\lambda_i-\lambda_k}e_k
-\left(\sum_{k\neq i}\frac{1}{\lambda_i-\lambda_k}\right)e_i.
\]
Now set
\[
r:=X\otimes 1-1\otimes X.
\]

Identifying \(\operatorname{T}(V)\) with \(\kk\langle x_1,\ldots,x_n\rangle\) under \(x_i\leftrightarrow e_i\), one has
\[
\{\!\!\{a,b\}\!\!\}_{\id}=r(a,b),\hspace{30pt}\{\!\!\{a,b\}\!\!\}_{(12)}=R(a,b),\hspace{20pt} a,b\in V.
\]

Due to Proposition \ref{prop5}, it remains to check that
\[
r^{21}=-r,\qquad
[r^{12},r^{13}]+[r^{12},r^{23}]+[r^{13},r^{23}]=0,\qquad
[R^{12},\,r^{13}+r^{23}]=0.
\]

It is clear that \(r^{21}=-r\) from the very definition of \(r\). Write \(X_1:=X\otimes 1\otimes 1\), \(X_2:=1\otimes X\otimes 1\), \(X_3:=1\otimes 1\otimes X\). On \(V^{\otimes3}\), one has
\[
r^{12}=X_1-X_2,\qquad r^{13}=X_1-X_3,\qquad r^{23}=X_2-X_3.
\]
Since operators on different legs commute, \([X_a,X_b]=0\) for \(a\neq b\). Hence \( [r^{12},r^{13}]=[r^{12},r^{23}]=[r^{13},r^{23}]=0.\)

Let us prove the only remaining identity. We have
\(
r^{13}+r^{23}=X_1+X_2-2X_3.
\)
Since \(R^{12}\) acts only on legs \(1,2\), one has \([R^{12},X_3]=0\). So it is enough to prove
\(
[R^{12},X_1+X_2]=0.
\)
On \(V\otimes V\), denote \(S:=X\otimes 1+1\otimes X\), so we need to check \([R,S]=0\).

Define
\[
F_{ij}:=e_i\otimes e_j+e_j\otimes e_i-e_i\otimes e_i-e_j\otimes e_j.
\]
Then \(F_{ij}=F_{ji}\), \(F_{ii}=0\), and
\[
R(e_i\otimes e_j)=a_{ij}F_{ij}
\]
for
\[
a_{ij}:=\frac1{\lambda_i-\lambda_j}\quad (\text{for}\ i\neq j),\qquad a_{ii}:=0,
\]
Also write \(Xe_i=\sum_k b_{ik}e_k\), where
\[
b_{ik}=a_{ik}\ (\text{for}\ k\neq i),\qquad b_{ii}=-\sum_{k\neq i}a_{ik},
\]
so \(\sum_k b_{ik}=0\).

\emph{Case \(i=j\).}
Since \(R(e_i\otimes e_i)=0\), \(SR(e_i\otimes e_i)=0\). Also
\[
RS(e_i\otimes e_i)
=\sum_{k\neq i}b_{ik}R(e_k\otimes e_i)+\sum_{k\neq i}b_{ik}R(e_i\otimes e_k)=\sum_{k\neq i}b_{ik}a_{ki}F_{ki}+\sum_{k\neq i}b_{ik}a_{ik}F_{ik}=0
\]
because \(a_{ki}=-a_{ik}\) and \(F_{ki}=F_{ik}\). Hence \([R,S](e_i\otimes e_i)=0\).

\emph{Case \(i\neq j\).}
We compute \(RS(e_i\otimes e_j)\) first. Since
\[
S(e_i\otimes e_j)
=\sum_{p} b_{ip}\,e_p\otimes e_j+\sum_{q} b_{jq}\,e_i\otimes e_q,
\]
we get
\begin{align}
    RS(e_i\otimes e_j)
&=\sum_{p} b_{ip}\,R(e_p\otimes e_j)+\sum_{q} b_{jq}\,R(e_i\otimes e_q)\\
&=a_{ij}(b_{ii}+b_{jj})F_{ij}
+\sum_{k\neq i,j} b_{ik}a_{kj}F_{kj}
+\sum_{k\neq i,j} b_{jk}a_{ik}F_{ik},\label{f41}
\end{align}
because the \(p=j\) and \(q=i\) terms vanish.

Now compute \(SR(e_i\otimes e_j)\). Since \(R(e_i\otimes e_j)=a_{ij}F_{ij}\),
\[
SR(e_i\otimes e_j)=a_{ij}\,S(F_{ij}).
\]
One has
\begin{align*}
S(F_{ij})
&=S(e_i\otimes e_j)+S(e_j\otimes e_i)-S(e_i\otimes e_i)-S(e_j\otimes e_j)\\
&=\sum_k b_{ik}e_k\otimes e_j+\sum_k b_{jk}e_i\otimes e_k
  +\sum_k b_{jk}e_k\otimes e_i+\sum_k b_{ik}e_j\otimes e_k\\
&\qquad-\sum_k b_{ik}e_k\otimes e_i-\sum_k b_{ik}e_i\otimes e_k
      -\sum_k b_{jk}e_k\otimes e_j-\sum_k b_{jk}e_j\otimes e_k\\
&=\sum_k (b_{ik}-b_{jk})
\bigl(e_k\otimes e_j+e_j\otimes e_k-e_k\otimes e_i-e_i\otimes e_k\bigr).
\end{align*}
Now
\[
e_k\otimes e_j+e_j\otimes e_k-e_k\otimes e_i-e_i\otimes e_k
=(F_{kj}-F_{ki})+(e_j\otimes e_j-e_i\otimes e_i),
\]
so
\[
S(F_{ij})
=\sum_k (b_{ik}-b_{jk})(F_{kj}-F_{ki})
+\Bigl(\sum_k(b_{ik}-b_{jk})\Bigr)(e_j\otimes e_j-e_i\otimes e_i).
\]
By construction of \(b_{aa}\), \(\sum_k b_{ik}=\sum_k b_{jk}=0\), therefore
\[
S(F_{ij})=\sum_k (b_{ik}-b_{jk})(F_{kj}-F_{ki}).
\]
Finally split \(k=i,j\) and \(k\neq i,j\):
\begin{align*}
S(F_{ij})
&=(b_{ii}-b_{ji})(F_{ij}-F_{ii})+(b_{ij}-b_{jj})(F_{jj}-F_{ji})
 +\sum_{k\neq i,j}(b_{ik}-b_{jk})(F_{kj}-F_{ki})\\
&=(b_{ii}-b_{ji}-b_{ij}+b_{jj})F_{ij}
 +\sum_{k\neq i,j}(b_{ik}-b_{jk})(F_{kj}-F_{ki}).
\end{align*}
For \(i\neq j\), \(b_{ji}=a_{ji}=- a_{ij}=-b_{ij}\), so \(b_{ii}-b_{ji}-b_{ij}+b_{jj}=b_{ii}+b_{jj}.\)
Therefore
\[
S(F_{ij})
=(b_{ii}+b_{jj})F_{ij}
+\sum_{k\neq i,j}(b_{ik}-b_{jk})(F_{kj}-F_{ki}).
\]
Hence
\begin{align}
    SR(e_i\otimes e_j)
=a_{ij}(b_{ii}+b_{jj})F_{ij}+a_{ij}\sum_{k\neq i,j}(b_{ik}-b_{jk})F_{kj}
-a_{ij}\sum_{k\neq i,j}(b_{ik}-b_{jk})F_{ik},\label{f40}
\end{align}
using \(F_{ki}=F_{ik}\).

Compare coefficients of \eqref{f41} and \eqref{f40} for each \(k\neq i,j\):
\[
a_{ij}(b_{ik}-b_{jk})
= a_{ij}(a_{ik}-a_{jk})
= a_{ij}(a_{ik}+a_{kj})
= a_{ik}a_{kj}
=b_{ik}a_{kj},
\]
and
\[
-a_{ij}(b_{ik}-b_{jk})
=- a_{ij}(a_{ik}+a_{kj})
=- a_{ik}a_{kj}
= a_{ik}a_{jk}
=b_{jk}a_{ik}.
\]
So the coefficients of \(F_{ij},F_{kj},F_{ik}\) in \(SR(e_i\otimes e_j)\) and
\(RS(e_i\otimes e_j)\) are identical. Therefore
\[
SR(e_i\otimes e_j)=RS(e_i\otimes e_j)\qquad(\text{for}\ i\neq j)
\]
and the proof is complete.
\end{proof}

    \addtocontents{toc}{\protect\setcounter{tocdepth}{0}}
    \printbibliography
    \addtocontents{toc}{\protect\setcounter{tocdepth}{3}}
    \addcontentsline{toc}{section}{References}

\end{document}